\theoremstyle{plain}
\newtheorem{theorem}{Theorem}
\newtheorem*{theorem*}{Theorem}
\newtheorem{corollary}[theorem]{Corollary}
\newtheorem{lemma}[theorem]{Lemma}
\newtheorem{proposition}[theorem]{Proposition}
\theoremstyle{definition}
\newtheorem{definition}[theorem]{Definition}
\theoremstyle{remark}
\newtheorem{remark}[theorem]{Remark}
\newcommand{\de}{\mathbb \delta}
\newcommand{\La}{\mathbb \Lambda}
\newcommand{\om}{\omega}
\newcommand{\mP}{\mathbb P}
\newcommand{\mC}{\mathbb C}
\def\ga{\gamma}
\def\de{\delta}
\def\rh{\rho}
\def\si{\sigma}
\def\ph{\varphi}
\def\ps{\psi}
\def\om{\omega}
\def\Ga{\Gamma}
\def\La{\Lambda}
\def\Si{\Sigma}
\def\Ph{\Phi}
\def\Up{\Upsilon}
\newcommand{\lA}{{\cal A}}
\newcommand{\mR}{\mathbb R}
 \newcommand{\mN}{\mathbb N}
\newcommand{\be}{\begin{eqnarray}}
\newcommand{\ee}{\end{eqnarray}}
\newcommand{\bd}{\begin{definition}}
\newcommand{\ed}{\end{definition}}
\newcommand{\br}{\begin{remark}}
\newcommand{\er}{\end{remark}}
\newcommand{\gog}{{\mathfrak g}}
\newcommand{\bl}{\begin{lemma}}
\newcommand{\el}{\end{lemma}}
\newcommand{\bp}{\begin{picture}}
\newcommand{\ep}{\end{picture}}
\newcommand{\bi}{\begin{itemize}}
\newcommand{\ei}{\end{itemize}}
\newcommand{\bq}{\begin{quotation}}
\newcommand{\eq}{\end{quotation}}
\newcommand{\End}{\operatorname{End}}
\newcommand{\Diff}{\operatorname{Diff}}
\newcommand{\cA}{\mathcal{A}}
\newcommand{\cE}{\mathcal{E}}
\newcommand{\cO}{\mathcal{O}}
\newcommand{\cS}{\mathcal{S}}
\newcommand{\cQ}{\mathcal{Q}}
\newcommand{\na}{\nabla}
\newcommand{\Rho}{{\mbox{\sf P}}}
\newcommand{\Ric}{{\mbox{\sf Ric}}}
\newcommand{\bfom}{\pmb{\omega}}
\newcommand{\bfga}{\pmb{\gamma}}
\newcommand{\bfep}{\pmb{\epsilon}}
\newcommand{\Dir}{\slashed{D}}
\newcommand{\id}{\mathrm{id}}
\newcommand{\tf}{\mathrm{tf}}
\newcommand{\ol}[1]{\overline{#1}}
\newcommand{\wh}[1]{\widehat{#1}}
\newcommand{\lpl}                         
{\mbox{$
\begin{picture}(12.7,8)(-.5,-1)
\put(2,0.2){$+$}
\put(6.2,2.8){\oval(8,8)[l]}
\end{picture}$}}
\begin{document}

\title{Higher symmetries of symplectic Dirac operator}
\author{ Petr Somberg and Josef \v{S}ilhan}
\date{}
\maketitle
\date{}
\abstract 

We construct in projective 
differential geometry of the real dimension $2$
higher symmetry algebra of the symplectic Dirac operator 
$\Dir_s$ acting on symplectic spinors.
The higher symmetry differential operators correspond 
to the solution space 
of a class of projectively invariant
overdetermined operators of arbitrarily high order acting on 
symmetric tensors. The higher symmetry algebra structure corresponds
to a completely prime primitive ideal having as its associated 
variety the minimal nilpotent orbit of $\mathfrak{sl}(3,\mR)$.

\vspace{0.5cm}
	
{\bf Key words:} Symplectic Dirac operator, Higher symmetry algebra, 
                 Projective differential geometry, Minimal nilpotent orbit, $\mathfrak{sl}(3,\mR)$.
  
{\bf MSC classification:} 53D05, 35Q41, 58D19, 17B08, 53A20.

\endabstract

\tableofcontents  

\section{Introduction}
 \numberwithin{equation}{section}
 \setcounter{equation}{0}

\hspace{0.4cm} It is always desirable to convert a purely 
algebraic construct into its geometrical
realization with the hope to gain its better 
understanding, as well as a potential 
generalization of the former algebraic 
structure. The present article is an example of this phenomenon: the algebraic 
structure is the algebra of higher symmetries of the symplectic Dirac operator $\Dir_s$
(cf. the seminal work \cite{kos}) realized in projective differential geometry 
of the real dimension two. This algebra corresponds to a completely prime primitive ideal
which has as its associated variety the minimal nilpotent orbit of the complexification 
of $\mathfrak{sl}(3,\mR)$, while the geometric realization pursued in our article 
relies on the use 
of certain class of projectively invariant systems of differential equations of
arbitrarily high order together
with the convenient calculus of symmetric powers of projective adjoint tractor bundle.     
Consequently, our geometric construction can be generalized to any smooth 
manifold equipped with a projective differential structure, thereby making contact with the 
appearance of linear and bilinear differential invariants for a given (curved) projective
structure.

There is a well established notion of higher symmetry operators for a system of 
partial differential equations
given by differential operators which preserve its solution space, see e.g.
\cite{mil}, \cite{olv1}, \cite{olv2}. In the case when the symmetry algebra is
derived from a semi-simple Lie algebra, there is its direct relationship to
the fundamental structural properties of semi-simple Lie algebra including ideal structure 
in its universal enveloping algebra, its adjoint group orbit structure, etc. 
The geometric approach,
allowing to construct these algebraic invariants locally on manifolds with    
a geometric structure, has been successfully completed in the case of Laplace 
operator and conformal differential structure, \cite{eas}, and since then many other 
cases were treated (cf. the bibliography and extensive references therein.)  

The present article is devoted to analogous questions in the case of symplectic
Dirac operator $\Dir_s$ in the real dimension two. The first order differential
symmetries of $\Dir_s$ were already identified with $\mathfrak{sl}(3,\mR)$, cf. \cite{bhs}.
As we shall observe, the algebra of higher symmetries 
leads to projective differential structure and the minimal nilpotent orbit of the 
complexification of $\mathfrak{sl}(3,\mR)$. We shall 
construct the vector space of symmetries as the 
solution space of a class of projectively invariant overdetermined systems
of arbitrarily high order (a substantial difference from the known 
cases with a uniform bound on the orders of higher symmetry differential operators) studied 
on a locally flat projective manifold. The most convenient geometrical 
language allowing a uniform treatment is based on tractor bundles for 
projective parabolic geometries, cf. \cite{cs}. Projectively 
invariant cup product allows to 
introduce associative algebra structure on higher symmetries of $\Dir_s$, and leads 
to an identification of the higher symmetry algebra with an ideal in 
the universal enveloping algebra of 
the complexification of $\mathfrak{sl}(3,\mR)$ given by a quantization of 
the coordinate ring of its
minimal nilpotent orbit. 

\section{Summary of the results}

The present section is a brief and non-technical summary of the content of our article. 
Throughout the article we work over a smooth manifold $M$ equipped with projective geometric structure.
We shall focus particularly on the real dimension $2$, where in addition we assume
the structure group to be the  double cover of $SL(2,\mathbb{R}) \cong Sp(2, \mathbb{R})$. (This  is needed to 
introduce the symplectic spinors.)  We shall use  the Penrose abstract indices  $a,b,\ldots$ for tensor
bundles,  together with the Einstein summation convention understood. 
That is, $\mathcal{E}_a = T^*M$, $\mathcal{E}^a = TM$, $\cE^{ab} = \bigotimes^2 TM$ etc., and we shall use the same 
notation for the spaces of sections. Symmetric tensor products will be denoted by round brackets, e.g.\
$S^2TM = \cE^{(ab)}$. 
We shall use suitable projectively invariant calculus based on the notion of Cartan geometries, 
and we refer to \cite{cs} for details. We use the notation 
$\mN$ for natural numbers $1,2,\ldots$, while $\mN_0$ for $0,1,2,\ldots$.
The abbreviation "lot" denotes lower order terms of a given differential 
operator with a fixed order and symbol, and all considered operators are 
differential. We denote by $\tf$ the trace-free part of the corresponding bundle. 
For example, $\tf(\cE^a{}_b)$ denotes the bundle of trace-free endomorphisms of $TM$.
Though we are interested mainly in $M$ of real dimension $2$, we shall discuss relevant
projectively invariant overdetermined operators in any dimension.

We shall introduce the symplectic Dirac operator in the case of 
real even dimension $2n \geq 2$.
This means that we start with $(M,\omega,\nabla)$, a smooth manifold $M$ equipped with symplectic $2$-form 
$\om_{ab}$ and symplectic covariant derivative $\na$ fulfilling  $\na_a \om_{bc}=0$. The symplectic
$2$-form $\om_{ab}$ allows the identification of $TM$ with $T^*M$. A double cover of the symplectic structure 
gives rise to the associated (the typical fiber being infinite dimensional) 
bundle of symplectic spinors $\cS = \cS_+ \oplus \cS_{-}$ 
induced from the Segal-Shale-Weil representation, \cite{kos}. 
As was already mentioned, by abuse of notation we denote by $\cS$ also the space of sections of $\cS$.
The symplectic Clifford algebra can be realized via symplectic gamma-matrices  
$\ga_a\in \cE_a \otimes \End{\cS}$,  satisfying
\begin{align}\label{gammaidentity1}
\ga_a\ga_b-\ga_b\ga_a=2\om_{ab}.
\end{align}
Here $v^a \ga_a: \cS_\pm \to \cS_\mp$ for any vector field $v^a \in \cE^a$.
Then the symplectic Dirac operator $\Dir_s$ (cf., \cite{hab,cgr}) is defined as 
\begin{align*} 
\omega^{rs} \gamma_s \na_r: \mathcal{S}_\pm \to \mathcal{S}_\mp.
\end{align*}
We refer to \cite{km} for a thorough discussion of  the notion of convenient analysis and 
differential operators acting on sections of infinite dimensional bundles over finite 
dimensional smooth manifolds.

Denoting the space of differential operators acting on $\cS$ by $\Diff(\cS)$, we term $\cO \in \Diff(\cS)$
{\em a higher symmetry} or briefly {\em a symmetry} of $\Dir_s$ provided there exists another differential operator 
$\cO' \in \Diff(\cS)$ such that 
\begin{align}\label{symcond} 
\Dir_s{\fam2 O}={\fam2 O}'\Dir_s. 
\end{align}
Operators of the form ${\fam2 O} = T \Dir_s$ for some differential operator 
$T\in \Diff(\cS)$ are called {\em trivial symmetries}.
Since the composition of symmetries is also a symmetry, the vector space of symmetries is an algebra
denoted $\cA'$ together with the ideal $\cA''\subset\cA'$ of trivial symmetries. 
The aim of our article is to understand 
the quotient-algebra $\cA : =\cA' / \cA''$ in the case of $\Dir_s$. The filtration 
$\Diff^k (\cS) \subseteq \Diff (\cS)$ by order
of operators induces filtration $\cA^k  = \cA \cap \Diff^k(\cS)$ on the algebra $\cA$.
In dimension $2$ and for the flat connection $\na$, 
a direct computation shows that the Lie algebra of first order symmetries of $\Dir_s$ 
is isomorphic to $\mathfrak{sl}(3,\mR)$, \cite{bhs}.
Further, invariance of $\Dir_s$ with respect to the projective geometrical structure 
was recognized in \cite{khs}. This conclusion does not apply in higher dimensions $2n \geq 4$ 
and not much is known in this direction.

Assume the real dimension equals to $2$. Then we can extend the symplectic structure $(M,\om,\na)$ to the projective class of
connections $[\na]$. (Notice the choice $\hat{\na} \in [\na]$ uniquely determines a symplectic form $\hat{\om}$ such that
$\hat{\na}_a \hat{\om}_{ab}=0$.) 
Then the Lie algebra of first order symmetries of $\Dir_s$ is indeed the algebra of infinitesimal projective symmetries
for $(M,[\na])$, \cite{bhs}. Further, we shall need density bundles $\cE(w)$ for a~weight $w \in \mathbb{R}$ 
with standard notational convention
in the projective geometry. That is, the bundle of volume forms is isomorphic to $\cE(-3)$. In analogy to \cite{er},
the bundle of suitably weighted projective frames (or rather its double cover) leads to the associated 
bundle of projective/symplectic spinors denoted by $\cS$ again (see section \ref{spincon} for details).
We put $\cS_\pm(w) := \cS_\pm \otimes \cE(w)$. Then we have a projectively invariant version of the symplectic 
Dirac operator,
\begin{align} \label{sDir}
\Dir_s:= \omega^{rs} \gamma_s \na_r: \mathcal{S}_\pm(-\tfrac34) \to \mathcal{S}_\mp(-\tfrac94), 
\end{align}
in the sense that $\omega^{rs} \gamma_s \na_r$ does not depend on the choice of $\na \in [\na]$ for this
particular choice of weights.

Results in \cite{khs} and \cite{bhs} presume the existence of a flat connection in the projective class $[\na]$.
This corresponds to the  notion of projectively flat structures, see section \ref{projgeom} for a precise definition.
Indeed, characterizing symmetries of $\Dir_s$ on curved projective manifolds is much more complicated
problem, hence  we assume $(M,[\na])$ is projectively flat for the rest of this section.

Principal symbols of operators in $\Diff^k(\cS(w))$ are sections of $S^kTM \otimes \End(\cS)$, 
where $\End(\cS)$ is 
the bundle of symplectic Clifford algebras with the typical fiber isomorphic to \eqref{Cl}. 
Assuming $\cO \in \Diff^k(\cS)$
is a symmetry of $\Dir_s$, we first observe that, modulo $\Dir_s$,  the principal symbol of $\cO$ is a section of 
$S^kTM = (S^kTM\otimes 1)\subseteq S^kTM \otimes \End(\cS)$, see Corollary \ref{pssymalg}.
Moreover, given a section $\si \in S^kTM$, we construct a canonical operator $\cO^\si \in  \Diff^k(\cS(w))$ with
the principal symbol $\si$ for any $w \in \mathbb{R}$, cf.\ \eqref{cQ}. 
Another ingredient are  the differential constraints  for principal symbols  
$\si \in S^kTM$ of symmetries of $\Dir_s$. They are provided
by a family of overdetermined projectively invariant differential operators 
$\Phi_k$ acting on $S^kTM$ and introduced in section \ref{projgeom}, cf.\ \eqref{bgg}. 
Now we are ready to characterize the algebra of  symmetries $\cA$ of $\Dir_s$ as a vector space:

\begin{theorem}
(i) Given $\si \in S^kTM$, the operator $\cO^\si$ is a symmetry of $\Dir_s$ if and only if $\Phi_k(\si)=0$.

(ii) The operator $\cO \in \Diff^k(\cS)$ is a symmetry of $\Dir_s$ if and only if $\cO$ is,
modulo trivial symmetries, of the form
$$
\cO = \cO^{\si_{[k]}} + \cO^{\si_{[k-1]}} + \ldots + \cO^{\si_{[0]}}: 
\cS(-\tfrac34) \to \cS(-\tfrac34)
$$
where $\si_{[i]} \in S^iTM$ and $\Ph_i(\si_{[i]})=0$ for all $i=0, \ldots, k$.
This in particularly means that $\cO$, modulo a trivial symmetry,  preserves the decomposition
$\cS(-\tfrac34) = \cS_+(-\tfrac34) \oplus \cS_-(-\tfrac34)$.

\end{theorem}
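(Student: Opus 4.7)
The plan is to prove part (i) by isolating the sub-leading obstruction to the symmetry equation $\Dir_s\cO^\sigma = \cO'\Dir_s$ and identifying it with $\Phi_k(\sigma)$; part (ii) then follows by induction on the order $k$, invoking (i) and Corollary~\ref{pssymalg} at each step.

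For (i), given $\sigma \in S^kTM$ both sides of the putative identity are differential operators of order $k+1$ on $\cS(-\tfrac34)$. Matching leading symbols prescribes the principal symbol of $\cO'$ uniquely in terms of $\sigma$, using only the symplectic Clifford relation \eqref{gammaidentity1}. Subtracting this contribution, the residual obstruction is a linear differential operator of order $k$ in $\sigma$ with values in $\cS(-\tfrac94)$. Because $\Dir_s$ is projectively invariant at the chosen weights and because the construction $\sigma\mapsto \cO^\sigma$ in \eqref{cQ} is canonical, this obstruction is itself a projectively invariant differential operator out of $S^kTM$. In the projectively flat setting the first such invariant operator is identified, by BGG uniqueness for parabolic geometries, with $\Phi_k$ from \eqref{bgg} up to a nonzero scalar. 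Both implications of (i) follow, and when $\Phi_k(\sigma) = 0$ the same computation explicitly exhibits the desired $\cO'$.

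For (ii) I would use induction on $k$. The base case $k = 0$ reduces to identifying the scalars in $\End(\cS)$ commuting with $\Dir_s$. For the inductive step, let $\cO \in \Diff^k(\cS(-\tfrac34))$ be a symmetry. By Corollary~\ref{pssymalg}, modulo a trivial symmetry $T\Dir_s$ the principal symbol of $\cO$ is a section $\sigma_{[k]} \in S^kTM \subset S^kTM \otimes \End(\cS)$. Since the sub-leading obstruction derived in (i) depends only on the principal symbol, requiring that $\cO$ be a symmetry forces $\Phi_k(\sigma_{[k]}) = 0$; hence by (i) the operator $\cO^{\sigma_{[k]}}$ is a symmetry with the same principal symbol as $\cO$, and $\cO - \cO^{\sigma_{[k]}} \in \Diff^{k-1}(\cS(-\tfrac34))$ is a symmetry to which the inductive hypothesis applies. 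Preservation of the parity splitting is immediate once one notes that each $\cO^{\sigma_{[i]}}$ is built in \eqref{cQ} with full symbol lying in the scalar subbundle of $\End(\cS)$, and hence commutes with the $\mZ/2$-grading on $\cS$.

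The main obstacle is the identification of the sub-leading obstruction in (i) with $\Phi_k(\sigma)$. Ruling out any competing projectively invariant operator out of $S^kTM$ of the correct order and target, and pinning down the resulting proportionality constant as nonzero, is where the projective tractor calculus and the explicit description of the prolongation of $\Phi_k$ enter substantively; the rest of the argument is essentially bookkeeping once (i) is in hand.
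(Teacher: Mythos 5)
Your overall strategy (establish (i) as a symbol--obstruction statement, then reduce order by order via Corollary~\ref{pssymalg} to prove (ii)) matches the paper's in outline, but the proof of (i) as you've sketched it has a genuine gap that the abstract invariance argument cannot close on its own.

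The difficulty is that the quantity $\Dir_s\cO^\si - \cO'\Dir_s$ is \emph{a priori} a bilinear differential operator in $(\si,\ph)$, and what you actually need --- and what the paper proves explicitly in Lemma~\ref{naSi} and Theorem~\ref{DirOsi} --- is that for the canonical companion $\cO' = \ol{\cO}^\si = \cQ^k(\si)$ on $\cS(-\tfrac94)$, this difference is a \emph{zero-order} operator in $\ph$ of the very specific form $(\tfrac14)^k(\Phi(\si))^{b c_1 \ldots c_k a_1 \ldots a_k}\bfga_b\bfga_{c_1}\ldots\bfga_{c_k}\bfga_{a_1}\ldots\bfga_{a_k}$. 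Your sketch treats the obstruction as ``a linear differential operator of order $k$ in $\si$ with values in $\cS(-\tfrac94)$'' (it is in fact order $k+1$ in $\si$, since $\Phi_k$ is an order-$(k{+}1)$ BGG operator), but more importantly it never establishes that the dependence on $\ph$ is purely algebraic, nor identifies the finite-dimensional target bundle in which the $\si$-part lands. Without those two facts, ``BGG uniqueness'' has no bundle and no symbol to act on, and the identification with $\Phi_k$ is not available even up to a constant --- let alone the nonvanishing of that constant, which you explicitly defer. Also note that you never fix $\cO'$ canonically; for the obstruction to be projectively invariant you must take $\cO' = \ol{\cO}^\si$, otherwise the ``residual'' depends on a noncanonical choice. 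The paper avoids all of this by an explicit tractor computation: the commutation $\Dir_s\,\mathbb{D}^B{}_A = \mathbb{D}^B{}_A\,\Dir_s$ from \eqref{commutdsprolong} pushes $\Dir_s$ past the chain of $\mathbb{D}$'s, so the only failure comes from $\na_b\Si(\si)$, which the BGG prolongation identifies with $\Phi(\si)$ contracted into $\mathbb{Z}$-slots; matching these against the explicit $\mathbb{Z}\ldots\mathbb{Z}$-component of $\mathbb{D}^{B_1}{}_{A_1}\ldots\mathbb{D}^{B_k}{}_{A_k}$ gives the Clifford formula with the coefficient $(\tfrac14)^k$ visible.

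For (ii), the inductive skeleton is right, but the step ``requiring that $\cO$ be a symmetry forces $\Phi_k(\si_{[k]})=0$'' at the first stage is not immediate from (i) alone: $\cO = \cO^{\si_{[k]}} + \cO_{\mathrm{rest}}$ and \emph{a priori} $\Dir_s\cO_{\mathrm{rest}}$ could have a zero-order part canceling $\Phi_k(\si_{[k]})\bfga\ldots\bfga$. What you need is the order-and-$\bfga$-grading argument of Proposition~\ref{symalg}: since the obstruction from $\cO^{\si_{[k]}}$ is a zero-order operator with exactly $2k+1$ factors of $\bfga$, and $\cO_{\mathrm{rest}}$ has order $\le k-1$, the paper first concludes that the symbol of $\cO_{\mathrm{rest}}$ has no $\bfga$-content (hence lies in $S^{k-1}TM$), decomposes $\cO$ completely as in \eqref{Odec}, and only \emph{then} reads off all the $\Phi_i(\si_{[i]})=0$ simultaneously (Theorem~\ref{mainsymbols}), using that a zero-order operator factoring through $\Dir_s$ must vanish and that the $\bfga$-contractions with different numbers of factors are independent. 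Your observation about parity preservation is correct and is exactly the remark the paper makes after the decomposition.
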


This theorem follows from Theorem \ref{mainsymbols} and the discussion before this theorem.

The zero order symmetries are given by multiplication with a constant zero order 
differential operators, i.e.\ $\cA^0 \cong \mathbb{R}$. Further,
we write explicitly the first order symmetries (modulo constants): an operator 
$\cO \in \cA^1/\cA^0$ is a symmetry of $\Dir_s$ if and only if
\begin{align} \label{1st}
& \cO  = \si^r\nabla_r + \tfrac{1}{4}(\nabla_r \si^s) \ga^r\ga_s + \tfrac{1}{3}(\nabla_r v^r), \\
& \cO' = \si^r\nabla_r + \tfrac{1}{4}(\nabla_r \si^s) \ga^r\ga_s + \tfrac{5}{6}(\nabla_r v^r)
\end{align}
modulo trivial symmetries. Here the symbol $\si^a$ of $\cO, \cO'$ satisfies 
projectively invariant overdetermined differential system $\Phi_1(\si)=0$:
$$
\Phi_1(\si) = \tf (\na_{(a} \na_{b)} \si^c + \Ric_{ab} \si^c),
$$
with $\Ric_{ab}$ the Ricci tensor of $\na$. That is, principal symbols of first order symmetries
(modulo constants $\cA^0$)
are exactly infinitesimal projective symmetries isomorphic to the Lie algebra $\mathfrak{sl}(3,\mathbb{R})$.
Of course, this is expected 
since $\Dir_s$ is a projectively invariant differential operator, cf. \cite{khs}.

The algebra structure on $\cA$ is the content of our second main result proved in Section \ref{sec:5}.

\begin{theorem}
The algebra of symmetries $\cA$ of $\Dir_s$ is isomorphic to the quotient of the tensor algebra
$\bigoplus\limits_{k=0}^\infty\otimes^k\big(\mathfrak{sl}(3,{\mathbb R})\big)$
by a two sided ideal generated by quadratic relations
\begin{align*}
I\otimes\bar{I}-I\boxtimes\bar{I}-\tfrac{1}{2}[I,\bar{I}]+\tfrac{1}{32}\langle I,\bar{I}\rangle_K 
\end{align*}
for $I, \bar{I} \in \mathfrak{sl}(3,{\mathbb R})$. Here $[\,,\,]$ and
$\langle \, ,\, \rangle_K$ are the Lie bracket and the Killing form on $\mathfrak{sl}(3,{\mathbb R})$,
respectively, and  $\boxtimes$ denotes the Cartan product.

Equivalently, $\cA$ is the quotient of the universal enveloping 
algebra $U(\mathfrak{sl}(3,{\mathbb R}))$ by a two sided ideal generated by quadratic 
relations 
\begin{align*}
I\bar{I}+\bar{I}I-2I\boxtimes\bar{I}+\tfrac{1}{16}\langle I,\bar{I}\rangle_K.
\end{align*}
\end{theorem}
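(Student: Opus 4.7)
The plan is to construct a surjective algebra homomorphism $\Psi: T(\mathfrak{sl}(3,\mR)) \to \cA$ from the tensor algebra of the Lie algebra of first-order symmetries, verify that the quadratic relations lie in its kernel, and finally use a dimension/graded argument (based on the previous theorem) to conclude that the kernel is generated by exactly these relations.

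First, I would define $\Psi$ on generators by sending $I\in\mathfrak{sl}(3,\mR)$, viewed as a solution $\si_I^a$ of $\Phi_1(\si)=0$, to the explicit first order symmetry $\cO^I:=\cO^{\si_I}\in\cA^1/\cA^0$ given by formula \eqref{1st}. Because $\cA$ is an associative algebra under composition, $\Psi$ extends uniquely to the tensor algebra. Next, I would verify the key quadratic identity by direct computation: take $I,\bar I\in\mathfrak{sl}(3,\mR)$ and compute $\cO^I\circ \cO^{\bar I}$ as a second order operator on $\cS(-\tfrac34)$. Its principal symbol lives in $S^2\mathfrak{sl}(3,\mR)\oplus\wedge^2\mathfrak{sl}(3,\mR)$ and decomposes into the Cartan part $I\boxtimes \bar I$, the trace part along the Killing form $\langle I,\bar I\rangle_K$, and the Lie bracket part $[I,\bar I]$ (which matches the first order piece $\cO^{[I,\bar I]}$ up to the standard factor $\tfrac12$ from anticommutation). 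The key claim is that, modulo trivial symmetries,
\begin{equation*}
\cO^I\circ \cO^{\bar I}=\cO^{I\boxtimes \bar I}+\tfrac{1}{2}\cO^{[I,\bar I]}-\tfrac{1}{32}\langle I,\bar I\rangle_K \cdot \mathrm{Id},
\end{equation*}
which when rewritten gives precisely the stated quadratic relation. The coefficient $-1/32$ is obtained by tracking the contributions of the $\gamma\gamma$-terms in \eqref{1st} together with the curvature-dependent lower order corrections inside $\cO^{I\boxtimes\bar I}$; the Killing form appears because $\gamma^r\gamma_s$ contracted against itself produces, using the symplectic Clifford relation \eqref{gammaidentity1}, a scalar proportional to the invariant pairing.

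Once the relation is verified, surjectivity of $\Psi$ onto $\cA$ follows from part (ii) of the previous theorem together with the structure of the BGG-type operators $\Phi_k$ in the projectively flat case. Indeed, solutions of $\Phi_k$ on $S^kTM$ in the flat case form the irreducible $\mathfrak{sl}(3,\mR)$-module $\boxtimes^k\mathfrak{sl}(3,\mR)$ (the Cartan power, corresponding to a finite-dimensional subspace spanned by infinitesimal projective symmetries and their Cartan products), so every principal symbol of a symmetry $\cO^{\si_{[k]}}$ arises as $\Psi(I_1\boxtimes\cdots\boxtimes I_k)$ for some $I_j\in\mathfrak{sl}(3,\mR)$. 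By an induction on the filtration degree $k$, with the inductive step using the computed relation to reduce $I_1\otimes\cdots\otimes I_k$ modulo the relations to the Cartan product $I_1\boxtimes\cdots\boxtimes I_k$ plus lower degree terms, $\Psi$ hits every layer $\cA^k/\cA^{k-1}$.

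Finally, to identify the kernel I would compare graded dimensions. The ideal $J\subset T(\mathfrak{sl}(3,\mR))$ generated by the stated quadratic elements has, as a quotient, the associated graded algebra $\bigoplus_k \boxtimes^k\mathfrak{sl}(3,\mR)$, which is the coordinate ring of the minimal nilpotent orbit closure of $\mathfrak{sl}(3,\mC)$. By the previous theorem, $\mathrm{gr}\,\cA \cong \bigoplus_k \ker\Phi_k \cong \bigoplus_k \boxtimes^k\mathfrak{sl}(3,\mR)$ in the flat case, so the induced map $T(\mathfrak{sl}(3,\mR))/J \to \cA$ is a surjection between graded vector spaces of equal dimension in each degree, hence an isomorphism. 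The equivalent statement in terms of $U(\mathfrak{sl}(3,\mR))$ follows by symmetrizing the relation using $I\otimes \bar I+\bar I\otimes I$ and applying the PBW identification. The main obstacle I expect is the bookkeeping of lower order terms in the composition $\cO^I\circ \cO^{\bar I}$: the symplectic Clifford commutators \eqref{gammaidentity1}, the derivatives of the projective Schouten/Ricci tensor entering the normalization of $\cO^\si$, and the weight $-\tfrac34$ normalization all interact, and pinning down the precise coefficient $-1/32$ in front of $\langle\,,\,\rangle_K$ is the delicate part of the argument.
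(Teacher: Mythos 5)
Your overall strategy --- a surjective algebra map from the tensor algebra $\bigoplus_k\otimes^k\mathfrak{sl}(3,\mR)$ onto $\cA$, verification of the quadratic relation by computing the composition of two first-order symmetries, and a graded comparison to identify the kernel --- is exactly the paper's (Theorem~\ref{maintheorem} together with Lemmas~\ref{ddcomposition}--\ref{preparationlemma}). The one methodological difference is that the paper does not compute $\cO^I\circ\cO^{\bar I}$ directly from formula~\eqref{1st}; it instead works at the tractor level, computing $\mathbb{D}^B{}_A\mathbb{D}^D{}_C$ once (Lemma~\ref{ddcomposition}) and then decomposing into $\mathfrak{sl}(3,\mR)$-irreducibles, which controls all the Clifford and curvature bookkeeping and produces the coefficients through the normalizations $[\mathbb{D},\mathbb{D}]^B{}_C=3\mathbb{D}^B{}_C$ and $\langle\mathbb{D},\mathbb{D}\rangle=-\tfrac32$. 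Your plan of computing the composition directly on $\cS(-\tfrac34)$ would, with enough patience, produce the same relation, but the tractor formulation is what makes pinning down the scalar coefficient manageable.

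There is, however, a genuine gap in your final step. You assert that the quotient $T(\gog)/J$ by the ideal $J$ generated by the quadratic elements has associated graded $\bigoplus_k\boxtimes^k\gog$, invoking the identification with the coordinate ring of the closure of the minimal nilpotent orbit. But the surjection $T(\gog)/J\twoheadrightarrow\cA$ only gives you $\dim\bigl(T(\gog)/J\bigr)_k\geq\dim\boxtimes^k\gog$; the reverse inequality --- that the degree-two relations propagate so as to kill everything outside the $k$-th Cartan power in every degree $k\geq3$ --- is a PBW-type statement for this quadratic algebra which is not automatic and which your dimension count presupposes. The paper supplies this missing argument: after observing that the graded ideal contains the skew part, it passes to $\odot\gog$ and shows $\odot^k\gog=\cA^k\oplus{\fam2 I}^k$ by first establishing $(\gog\otimes\cA^{k-1})\cap(\cA^{k-1}\otimes\gog)=\cA^k$ and then using the elementary decomposition $\odot^k\gog=\bigl(\mathrm{Im}(P^k)\cap\mathrm{Im}(\tilde{P}^k)\bigr)\oplus\bigl(\mathrm{Ker}(P^k)+\mathrm{Ker}(\tilde{P}^k)\bigr)$. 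Without some version of this inductive step, the claim that $T(\gog)/J$ and $\cA$ have equal graded dimensions is unsupported.
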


The ideal defined on the last display corresponds
to a completely prime primitive ideal having as its associated 
variety the minimal nilpotent orbit of the complexification of
$\mathfrak{sl}(3,\mR)$, cf. \cite{jos,tor,gar,ess}.
The present article can be regarded as a geometric construction
of this exceptional ideal.

\section{Symplectic spinors and projective geometry} \label{sec:3}

\subsection{Symplectic spinors} \label{ss}

We present basic algebraic preliminaries related to 
the construction and realization of symplectic spinors, cf. \cite{hab,cgr}.
The metaplectic Lie group $Mp(2n,\mR)$ is the non-trivial
double covering of the symplectic Lie group $Sp(2n,\mR)$ of 
automorphisms of the standard symplectic space 
$(\mR^{2n},\omega)$. Their Lie
algebras are denoted by $\mathfrak{mp}(2n,\mR)$ and $\mathfrak{sp}(2n,\mR)$,
respectively.   
The symplectic spinor representation for the metaplectic Lie algebra 
$\mathfrak{mp}(2n,\mR)$  is given by two simple metaplectic modules
of the Segal-Shale-Weil representation, modeled on the vector space
of polynomials on a Lagrangian subspace $\mR^n$ of $(\mR^{2n},\omega)$.  
The ${\mC}$-algebra of endomorphisms of their direct sum  
is called symplectic Clifford algebra (or, the Weyl algebra) and is
isomorphic to the quotient of the tensor algebra $T(\mR^{2n})$ by a 
two sided ideal $I$:
\begin{align} \label{Cl}
\begin{aligned}
& Cl_s(\mR^{2n},\omega):= T(\mR^{2n})/\langle I\rangle, \\
& I=\{e_ie_j-e_je_i=2\omega_{ij}|\, i,j=1,\ldots ,2n, e_i, e_j\in\mR^{2n}\}.
\end{aligned}
\end{align}
We denoted by $e_1, \ldots , e_{2n}$ a basis of $\mR^{2n}$ and
$\omega_{ab}=\omega(e_a,e_b)$. 

The inverse of the symplectic $2$-form $\omega_{ab}$ is denoted $\omega^{ab}$,
and we use the convention $\omega^{ja}\omega_{jb}=\delta^{a}{}_{b}$ with the 
summation over $j=1,\dots ,2n$ understood. The 
composition of the isomorphisms $\omega_{ab}: TM\to T^* M$, 
$v_a\mapsto \omega^{aj}v_j$, and
$\omega^{ab}: T^* M\to TM$, $v^a\mapsto \omega_{ja}v^j$, is then the 
identity endomorphism of $TM$:
\begin{align}
v_a\mapsto \omega^{aj}v_j\mapsto\omega_{ka}\omega^{kj}v_j=v_a .
\end{align}
This is equivalent to $\omega^{a}{}_{b}=-\delta^{a}{}_{b}$,
$\omega_{a}{}^{b}=\delta_{a}{}^{b}$. As for the scalar product
induced by the symplectic form, we have $v^jw_j=\omega^{jk}v_k\omega_{lj}w^l$
so that $v_jw^j=-v^jw_j$.
 
We denote basis elements $e_1, \ldots , e_{2n}$ of $\mR^{2n}$ by $\gamma_1,\ldots , 
\gamma_{2n}$ when regarded as generators of $Cl_s(\mR^{2n},\omega)$, i.e.,
$\gamma_i\in T^*M\otimes Cl_s(\mR^{2n},\omega)$ for all $i=1, \ldots , 2n$. Then
\begin{align*}
\gamma_i\gamma_j-\gamma_j\gamma_i=2\omega_{ij},\quad 
\gamma_{[i}\gamma_{j]}=\omega_{ij},
\end{align*}
where their traces are 
\begin{align}\label{gammaidentity2}
\gamma^k\gamma_k=-2n,\quad
\gamma_k\gamma^k=2n, 
\end{align}
and the commutator with symmetrized product of $\gamma$'s is
\begin{align}\label{gammaidentity3}
\gamma^a\gamma^{(a_1}\ldots\gamma^{a_{j})}-\gamma^{(a_1}\ldots\gamma^{a_{j})}\gamma^a=
2j\,\omega^{a(a_1}\gamma^{a_2}\ldots\gamma^{a_{j})}
\end{align}
for any $j\in \mN$. As already indicated, in the present article we are mostly
concerned with the case $n=1$.

We have the inclusion $\frak{sp}(2n,\mathbb{R}) \cong S^2 \mathbb{R}^{2n} \subseteq  Cl_s(\mR^{2n},\omega)$ realized by 
quadratic elements in the generators $\mathbb{R}^{2n} \subseteq  Cl_s(\mR^{2n},\omega)$. 
This promotes to the bundle level as follows:
the section $F^{ab} \in \cE^{(ab)}$ acts on $\ph \in \cS$ by
$$
\ph \mapsto F (\ph) = -\tfrac14 F^{ab} \gamma_a \gamma_b \ph,
$$
where the coefficient $\frac14$ ensures this  indeed corresponds to the Lie algebra representation. 
If $n=1$, this can be expressed as the action of endomorphisms $F^a{}_b \in \tf (\cE^a{}_b)$, 
\begin{equation} \label{action}
\ph \mapsto \tfrac14 F^a{}_b \gamma_a \gamma^b \ph.
\end{equation}

The symplectic Dirac operator $\Dir_s$ acts on symplectic spinors induced from the 
half-integral Segal-Shale-Weil $\smash{\widetilde{SL}}(2,\mR)$-representation, \cite{kos}, \cite{hab}.
It was introduced in the seminal work \cite{kos} for the purpose of geometric 
quantization on any symplectic manifold with a metaplectic structure.
In \eqref{sDir}, the algebraic map $TM\otimes\mathcal{S}^\pm\to\mathcal{S}^\mp$ 
follows from the 
embedding of $TM$ into the bundle of symplectic Clifford algebras, and $\nabla$
denotes the induced symplectic covariant derivative on $\mathcal{S}^\pm$.

\subsection{Projective geometry and tractor calculus}
\label{projgeom}

Projective structure on a smooth manifold $M$ of
real dimension greater than equal to $2$ is a class 
$[\nabla]$ of torsion-free volume preserving connections, which define 
the same family of unparametrized geodesics. A connection is projectively flat if and only if it is locally
equivalent to a flat connection, which means there exists a local isomorphism
with the flat model of $n$-dimensional homogeneous projective geometry on $\mathbb{RP}^n$
equipped with the flat projective structure given by the absolute
parallelism.
The homogeneous model of projective geometry in the real
dimension $2$ is $\mathbb{RP}^2\simeq G/P$, where $G\simeq SL(3,\mR)$ 
and $P\subset G$ the parabolic
subgroup stabilizing the line $[v]\in\mR^3$ generated by a non-zero vector $v$ in
the defining representation $\mR^3$ of $G$.
The construction of associated vector
bundles induced from half integral modules of $P$ 
(e.g.,\ the simple metaplectic submodules
of the Segal-Shale-Weil representation for the metaplectic group) 
on $G/P$ requires the double (universal) cover
$\smash{\widetilde{G}}=\smash{\widetilde{SL}}(3,\mR)$ and its parabolic subgroup 
$\smash{\widetilde{P}}$. The Lie
group $\smash{\widetilde{SL}}(3,\mR)$ acts transitively on $S^2\simeq \mC\mP^1$,
the double (universal) cover of $\mathbb{RP}^2$, with parabolic stabilizer
$\smash{\widetilde{P}}=(GL(1,\mR)_+\times \smash{\widetilde{SL}}(2,\mR))\ltimes\mR^2$.
The double (universal) cover 
$\smash{\widetilde{SL}}(3,\mR)/\smash{\widetilde{P}}\simeq S^2\simeq \mC\mP^1$ 
is a symplectic manifold,
while $\mR\mP^2$ is non-orientable and hence not symplectic.

Further, we follow conventions for projective structures as in \cite{beg}. 
Recall the notation $\cE(w)$ for density   bundles, $w\in\mC$.
The difference between two connections  $\na, \wh{\na} \in [\na]$ 
for a given projective structure $[\na]$
is controlled by a one-form $\Upsilon_a:=\nabla_a\log(f)$,
 $f\in \cE\equiv C^\infty(M)$. Specifically, we have
\begin{align}
&\hat{\na}_a \alpha = \na_a \alpha + w\Up_a \alpha, 
\quad \alpha \in \cE(w), \notag \\
&\hat{\na}_a V^b = \na_a V^b + \Up_a V^b + \Up_c V^c \de_a^b,
\quad  V^b \in \cE^b, \label{trans} \\
&\hat{\na}_a \mu_b = \na_a \mu_b - \Up_a \mu_b - \Up_b \mu_a, 
\quad \mu_a \in \cE_a. \notag
\end{align}

The curvature tensor $R_{ab}{}^c{}_d$ of $\na$ is defined by 
$(\na_a \na_b - \na_b \na_a) V^c = R_{ab}{}^c{}_p V^p$ and it decomposes as
$$
R_{ab}{}^c{}_d = W_{ab}{}^c{}_d + 2\de_{[a}{}^c \Rho_{b]d},
$$
where the Schouten tensor $\Rho_{ab}$ is symmetric. Note the Ricci tensor equals to 
$\Ric_{ab} = (n-1)\Rho_{ab}$.
Here $W_{ab}{}^c{}_d$ is projectively invariant (and irreducible) Weyl 
tensor, and the Cotton-York tensor $Y_{abc} := 2\na_{[a}\Rho_{b]c}$ is projectively invariant in dimension $2$.
The projective structure $(M,[\na])$ is locally flat if and only if $W_{ab}{}^c{}_d=0$ (in dimension $2n \geq 3$)
or $Y_{abc} =0$ (for $n=1$). There is a $\na$-parallel volume form $\epsilon \in \La^{2n} T^*M$ and
we have the projective volume form $\bfep \in \La^{2n}T^*M(2n+1)$ which is parallel for any $\na \in [\na]$.
Any choice $\na \in [\na]$ in dimension $2$ gives a symplectic structure $\om = \epsilon$ and we shall 
use the notation $\om$ in this dimension. 
Similarly, we have the weighted version $\bfom_{ab} = \bfep_{ab} \in \cE_{[ab]}(3)$
with its dual $\bfom^{ab} \in \cE^{[ab]}(-3)$

We shall write sections of the standard projective tractor bundle 
$\cE^A = \cE^a(-1) \lpl \cE(-1)$, resp.\ its dual 
$\cE_A = \cE(1) \lpl \cE_a(1)$ using the injectors $Y^A$, $X^A$,
resp.\ $Y_A$, $X_A$ as 
\begin{align} \label{injectorsfundamental}
\begin{pmatrix} 
\si^a \\ \rh
\end{pmatrix}
= Y^A{}_a \si^a + X^A \rh \in \cE^A,
\ \ \text{resp.} \ \
\begin{pmatrix} 
\nu \\ \mu_a
\end{pmatrix}
= Y_A \nu + X_A{}^a \mu_a \in \cE_A.
\end{align}
These splittings of $\cE^A$ and $\cE_A$ are determined by choices of projective
connections and we call them projective splittings.
The change of the splitting under the change of connection parametrized by
$\Upsilon_a \in \cE_a$ is
\begin{align*}
&\widehat{
\begin{pmatrix} 
\si^a \\ \rh
\end{pmatrix}
} = 
\begin{pmatrix} 
\si^a \\ \rh - \Upsilon_a \si^a  
\end{pmatrix},
\ \ \mbox{i.e.} \ \
\hat{Y}^A{}_a = Y^A{}_a + X^A \Upsilon_a,\ 
\hat{X}^A = X^A
\quad \text{and} \\
&\widehat{
\begin{pmatrix} 
\nu \\ \mu_a
\end{pmatrix}
} = 
\begin{pmatrix} 
\nu \\ \mu_a + \Upsilon_a \nu  
\end{pmatrix},
\ \ \mbox{i.e.} \ \
\hat{Y}_A = Y_A - X_A{}^a \Upsilon_a,\ 
\hat{X}_A{}^a = X_A{}^a.
\end{align*}
That is,  $X^A \in \cE^A(1)$, $X_A{}^a \in \cE_A{}^a(-1)$ are invariant
and $Y^A{}_a \in \cE^A{}_a(1)$, $Y_A \in \cE_A(-1)$ depend on the choice
of the projective scale. 
We assume the normalization of these in which 
$Y_A X^B + X_A{}^c Y^B{}_c = \de_A{}^B$, i.e., \ $Y_C X^C = 1$ and 
$X_C{}^a Y^C{}_b = \de^a{}_b$.

The normal covariant derivative is given by
\begin{align*}
&\na_c
\begin{pmatrix} 
\si^a \\ \rh 
\end{pmatrix} = 
\begin{pmatrix} 
\na_c \si^a + \rh \de_c{}^a \\ \na_c \rh - P_{cp} \si^p
\end{pmatrix} 
\quad \text{and} \quad
\na_c 
\begin{pmatrix} 
\nu \\ \mu_a 
\end{pmatrix} = 
\begin{pmatrix} 
\na_c \nu - \mu_c  \\ \na_c \mu_a + P_{ca} \nu
\end{pmatrix}, \quad \text{i.e.}\\
&\na_c^{} Y^A{}_a = -X^A \Rho_{ca},\ \na_c X^A = Y^A{}_c, \ 
\na_c^{} Y_A = X_A{}^a \Rho_{ca},\ \na_c X_A{}^a = -Y^A \de_c{}^a,
\end{align*}
and its curvature $\Omega$ has the form
$$
\Omega_{ab}{}^E{}_F = Y^E{}_e X_F{}^f W_{ab}{}^e{}_f - X^E X_F{}^f Y_{abf} \in
\cE_{[ab]} \otimes \lA_{\mathfrak{d}}.
$$
That is, $\lA_{\mathfrak{d}} := \tf (\cE^E{}_F)$ is the projective adjoint 
tractor bundle, hence the curvature action on $\cE_C$ is
$(\na_a\na_b - \na_b\na_a) F_C =  - \Omega_{ab}{}^D{}_C F_D$. 

We shall be interested in symmetric tensor powers of $\lA_{\mathfrak{d}} \subseteq {\mathcal E}^A{}_B$.
Injectors for the bundle ${\mathcal E}^A{}_B$ defined as
\begin{align}
{\mathbb Y}_{a}{}^A{}_{B}:=Y_a{}^AY_B, \quad  & {\mathbb Z}_a{}^{b}{}^A{}_{B}:=Y_a{}^AX^b{}_B,
\nonumber \\
{\mathbb W}^{A}{}_{B}:=X^AY_B, \quad & {\mathbb X}^{b}{}^A{}_{B}:=X^AX^b{}_B,
\end{align}
are acted upon by the covariant derivative,
\begin{align}
& \nabla_c{\mathbb Y}^{A}{}_{a}{}_B=-P_{ca}{\mathbb W}^{A}{}_{B}
+P_{cb}{\mathbb Z}_a{}^{b}{}^A{}_{B}, \nonumber \\ 
& \nabla_c{\mathbb Z}_a^{}{b}{}^A{}_{B}=-P_{ca}{\mathbb X}^{b}{}^A{}_{B}
-{\mathbb Y}_a{}^{A}{}_{B}\delta_c{}^{b},\nonumber \\
& \nabla_c{\mathbb W}^{A}{}_{B}={\mathbb Y}_c{}^{A}{}_{B}+
P_{ac}{\mathbb X}^{a}{}^A{}_{B}, \nonumber \\ 
& \nabla_c{\mathbb X}^{b}{}^A{}_B=\,\,{\mathbb Z}_c{}^b{}^A{}_{B}
-{\mathbb W}^{A}{}_{B}\delta_c{}^{b} \label{covderinj}.
\end{align}
As for the tensor product of injectors, we have  
\begin{align}\label{injoperpartscalprod}
& {\mathbb Z}_b{}^a{}^B{}_{R}{\mathbb Z}_d{}^c{}^R{}_{C} = \,
{\mathbb Z}_b{}^c{}^B{}_C\delta_d{}^{a}, & &
{\mathbb Z}_r{}^{b}{}^{R}{}_{B}{\mathbb X}^{s}{}^{A}{}_{R} = 
\delta_r{}^{s}{\mathbb X}^{b}{}^{A}{}_{B},
\nonumber \\
& {\mathbb X}^b{}^B{}_{R}{\mathbb Y}_d{}^{R}{}_{C}= \, {\mathbb W}^{B}{}_{C}\delta_d{}^{b}, 
 & & {\mathbb X}^r{}^R{}_{B}{\mathbb W}^{A}{}_{R} =  {\mathbb X}^r{}^A{}_{B},
\nonumber \\
& {\mathbb X}^b{}^R{}_{C}{\mathbb Y}_d{}^{B}{}_{R} = \, {\mathbb Z}_d{}^{b}{}^B{}_{C},
 & & {\mathbb Z}_r{}^{s}{}^{A}{}_{R}{\mathbb Y}_a{}^{R}{}_{B} =  
\delta^{s}{}_{a}{\mathbb Y}_r{}^{A}{}_{B},
\end{align}
and their contractions are
\begin{align}\label{injoperscalprod}
{\mathbb Y}_a{}^{A}{}_{B}{\mathbb X}^b{}^B{}_{A}=\delta_a{}^{b},\quad
{\mathbb Z}_a{}^{b}{}^A{}_{B}{\mathbb Z}_c{}^{d}{}^B{}_{A}=\delta_a{}^{d}\delta_c{}^{b},\quad
{\mathbb W}^{A}{}_{B}{\mathbb W}^{B}{}_{A}=1,
\end{align}
respectively. By \eqref{injectorsfundamental}, we write sections of  $\lA_{\mathfrak{d}}$ as 
\begin{align*}
I^A{}_B := \left( \begin{array}{c}  v^a  \\ \mu^a{}_{b} \ \ \mid \ \  \varphi \\  \rho_a \end{array} \right)
= v^c{\mathbb Y}_c{}^{A}{}_{B} + \mu^d{}_{c}{\mathbb Z}_d{}^{c}{}^A{}_{B}
+\varphi{\mathbb W}^{A}{}_{B}+\rho_c{\mathbb X}^{c}{}^A{}_{B},
\end{align*}
where the trace-free condition $\mu^a{}_{a}+\varphi=0$, equivalent to 
$I^A{}_A=0$, is implied by  
$\mathfrak{sl}(3,\mathbb R)$-irreducibility. The set of formulas 
\eqref{covderinj} can be rewritten as
\begin{align}
&\nabla_c\left( \begin{array}{c} v^a  \\  \mu^a_{}{b} \ \ \mid \ \ \varphi \\  \rho_a   \end{array} \right)
= \nabla_c\big (v^d{\mathbb Y}_d{}^{A}{}_{B} + \mu^d{}_{e}{\mathbb Z}_d{}^{e}{}^A{}_{B}
+\varphi{\mathbb W}^{A}{}_{B}+\rho_d{}^d{\mathbb X}^{d}{}^A{}_{B}\big )
 \nonumber \\ \nonumber
&= \left( \begin{array}{c} 
  \nabla_cv^a-\mu^a{}_{c}+\varphi\delta^a{}_{c}  \\
  \nabla_c\mu^a{}_{b}+v^aP_{cb}+\rho_b\delta^a{}_{c} \qquad \mid \qquad  \nabla_c\varphi-v^aP_{ca}-\rho_c \\
  \nabla_c\rho_a+\varphi P_{ac}-\mu^b{}_{a}P_{cb} 
\end{array} \right) , \\ \label{tractcovderexp}
\end{align}
where $\mu^a{}_{a}+\varphi=0$ is preserved by the tractor 
covariant derivative. 
If  $I^A{}_{B} \in \lA_{\mathfrak{d}}$ is covariantly constant,
\begin{eqnarray}\label{covconstsect}
\nabla_cI^A{}_{B}=
\nabla_c\left( \begin{array}{ccc}
 & v^a &  \\
 \mu^a{}_{b} & & \varphi \\
 & \rho_a &  
\end{array} \right)
=0,
\end{eqnarray} 
then the top slot $v^a \in \mathcal{E}^a$ satisfies 
the projectively invariant equation
\begin{equation} \label{bgginf}
\tf \bigl( \nabla_{(a} \nabla_{b)} v^c + \Rho_{ab} v^c \bigr)=0.
\end{equation}
This follows from \eqref{tractcovderexp} after a short computation.
Moreover, this equation is equivalent to $\na_c I^A{}_{B}=0$ on projectively flat manifolds.
If we use $\bfom_{ab}$ and consider $v_a \in \cE_a(3)$, the previous display converts into
$\nabla_{(a} \nabla_{b} v_{c)} + \Rho_{(ab} v_{c)} =0$ due to specific properties  
of projective geometry in the real dimension $2$.
The equation \eqref{bgginf} is the special case of the projectively invariant first BGG operator
on symmetric tensors,
\begin{align} \label{bgg}
\begin{aligned}
\Phi: \,&\cE^{(a_1 \ldots a_k)} \to \tf(\cE_{(c_0 \ldots c_{k})}{}^{(a_1 \ldots a_k)} \bigr)
\cong \cE^{(c_0 \ldots c_k a_1 \ldots a_k))}(-3(k\!+\!1)), \\
&\, \si^{a_1 \ldots a_k} \mapsto \tf \bigl( \na_{(c_0} \ldots \na_{c_{k})} \si^{a_1 \ldots a_k} \bigr) + \mathrm{lot} .
\end{aligned}
\end{align}
The following lemma is easily verified.
\begin{lemma}\label{lemma31}
The projection  
$\Pi: {\mathcal E}^A{}_{B} \rightarrow \, {\mathcal E}^a$ defined on sections by
\begin{align}
\left( \begin{array}{ccc}
 & v^a &  \\
 \mu_a{}^{b} & & \varphi \\
 & \rho_a &  
\end{array} \right)
& \mapsto \, v^a,
\end{align} 
has projectively invariant differential splitting  
$\Si: {\mathcal E}^a \rightarrow {\mathcal E}^A{}_{B}$ given by
\begin{align}\label{splittingsl3adjoint}
v^a \mapsto \big(\Sigma v^a\big)^{A}{}_{B} = & \,\,
{\mathbb Y}_a{}^{A}{}_{B}v^a
+{\mathbb Z}_a{}^b{}^A{}_B\big((\nabla_bv^a)_o+\tfrac{1}{6}\delta_b{}^{a}\nabla_cv^c\big)
\nonumber \\
& -\tfrac{1}{3}{\mathbb W}^{A}{}_{B}\nabla_cv^c
-{\mathbb X}^a{}^A{}_B(\tfrac{1}{3}\nabla_a\nabla_b+P_{ab})v^b .
\end{align} 
\end{lemma}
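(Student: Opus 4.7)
The lemma has three components: (a) $\Sigma v$ lies in the trace-free subbundle $\lA_{\mathfrak{d}}\subseteq\cE^A{}_B$; (b) $\Pi\circ\Sigma=\id$; and (c) $\Sigma$ is independent of the choice $\na\in[\na]$. Parts (a) and (b) follow immediately from \eqref{splittingsl3adjoint}: the $\mathbb Y$-coefficient is $v^a$, and in real dimension $2$ the trace of $\mu^a{}_b=(\na_b v^a)_o+\tfrac{1}{6}\delta_b{}^a\na_c v^c$ equals $\tfrac{1}{3}\na_c v^c$, exactly cancelling $\varphi=-\tfrac{1}{3}\na_c v^c$.

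For the invariance (c), my plan is to substitute the change-of-scale formulas directly into \eqref{splittingsl3adjoint}. The base injector transformations $\hat X^A=X^A$, $\hat Y^A{}_a=Y^A{}_a+X^A\Upsilon_a$, $\hat Y_A=Y_A-X_A{}^a\Upsilon_a$, $\hat X_A{}^a=X_A{}^a$ yield
\begin{align*}
\hat{\mathbb Y}_a{}^A{}_B&=\mathbb Y_a{}^A{}_B-\mathbb Z_a{}^b{}^A{}_B\Upsilon_b+\mathbb W^A{}_B\Upsilon_a-\mathbb X^b{}^A{}_B\Upsilon_a\Upsilon_b, \\
\hat{\mathbb Z}_a{}^b{}^A{}_B&=\mathbb Z_a{}^b{}^A{}_B+\mathbb X^b{}^A{}_B\Upsilon_a, \\
\hat{\mathbb W}^A{}_B&=\mathbb W^A{}_B-\mathbb X^b{}^A{}_B\Upsilon_b, \qquad \hat{\mathbb X}^b{}^A{}_B=\mathbb X^b{}^A{}_B.
\end{align*}
Combining these with $\hat\na_b v^a=\na_b v^a+\Upsilon_b v^a+\Upsilon_c v^c\delta_b{}^a$ from \eqref{trans} and $\hat P_{ab}=P_{ab}-\na_a\Upsilon_b+\Upsilon_a\Upsilon_b$, one substitutes into the hatted version of \eqref{splittingsl3adjoint} and verifies that after regrouping by the original $\mathbb Y,\mathbb Z,\mathbb W,\mathbb X$, all $\Upsilon$-terms cancel. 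The rational coefficients $\tfrac16$, $-\tfrac13$, $-\tfrac13$, $-1$ in \eqref{splittingsl3adjoint} are in fact uniquely forced by this cancellation together with the trace-free condition in real dimension $2$.

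The main bookkeeping obstacle is the $\mathbb X$-slot, where linear $\Upsilon$-contributions from the transformations of $\mathbb Y,\mathbb Z,\mathbb W$ and from $\hat\na\hat\na v$ must conspire with quadratic $\Upsilon\Upsilon$-contributions from $\hat{\mathbb Y}$ and $\hat P$ to cancel simultaneously. A conceptually cleaner organization, which I would use to bypass explicit computation, is to recognize $\Sigma$ as the first BGG splitting operator associated with the $k=1$ instance of \eqref{bgg}: since the top-slot projection $\Pi$ on the invariantly filtered bundle $\cE^A{}_B$ and the tractor connection $\na$ on $\cE^A{}_B$ are both projectively invariant, iteratively solving for the lower slots of $\Sigma v$ so as to kill successive slots of $\na(\Sigma v)^A{}_B$ via \eqref{tractcovderexp} produces an invariant operator by construction. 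Specifically, demanding the $\mathbb Y$-slot of $\na_c(\Sigma v)$ to vanish, combined with the trace-free relation $\mu^a{}_a+\varphi=0$, fixes $\mu^a{}_b$ and $\varphi$ as in \eqref{splittingsl3adjoint}; demanding the $\mathbb W$-slot to vanish and using symmetry of $P_{ab}$ then fixes $\rho_a=-(\tfrac{1}{3}\na_a\na_b+P_{ab})v^b$.
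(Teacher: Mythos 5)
The paper provides no proof for this lemma, dismissing it as ``easily verified,'' so there is no official argument to compare against; your proposal supplies the missing verification correctly. Both routes you outline are sound. The direct substitution route is straightforward bookkeeping: your injector transformation laws for $\mathbb Y,\mathbb Z,\mathbb W,\mathbb X$ follow correctly from $\hat Y^A{}_a=Y^A{}_a+X^A\Upsilon_a$, $\hat Y_A=Y_A-X_A{}^a\Upsilon_a$, $\hat X^A=X^A$, $\hat X_A{}^a=X_A{}^a$, and combining these with \eqref{trans} and $\hat P_{ab}=P_{ab}-\na_a\Upsilon_b+\Upsilon_a\Upsilon_b$ does establish invariance.

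Your ``conceptually cleaner organization'' is also correct and, I would say, is implicitly the paper's own viewpoint (the paper identifies $\Sigma$ as the BGG splitting operator in the paragraph after \eqref{split}). A small point worth making explicit for rigor: when you say killing slots of $\na_c(\Sigma v)$ is invariant ``by construction,'' the precise reason is the filtration logic. The $\mathbb Y$-slot of a $\cE_c$-valued tractor is the image under the invariant quotient $\cE_c\otimes\cE^A{}_B\to\cE_c\otimes\cE^a$, so killing it is an invariant condition; and once the $\mathbb Y$-slot vanishes, the $\mathbb W$-slot of $\na_c(\Sigma v)$ transforms without the correction term $-\Upsilon_b\,(\text{$\mathbb Y$-slot})$ and is therefore invariant as well, so killing it is again an invariant condition. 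With that caveat filled in, your iterative prescription --- $\mathbb Y$-slot of $\na_c(\Sigma v)$ vanishes, then $\mathbb W$-slot vanishes, together with the trace constraint $\mu^a{}_a+\varphi=0$ --- reproduces exactly $\mu^a{}_b=(\na_b v^a)_o+\tfrac16\de_b{}^a\na_c v^c$, $\varphi=-\tfrac13\na_c v^c$, and $\rho_a=-(\tfrac13\na_a\na_b+P_{ab})v^b$, matching \eqref{splittingsl3adjoint}. I see no gap.
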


Apart from the adjoint tractor bundle $\lA_{\mathfrak{d}}$, we shall need the bundles
$\tf\bigl( \cE^{(A_1 \ldots A_k)}{}_{(B_1 \ldots B_k)})$. Analogously as above, we have the projection
$\Pi$ and its differential splitting $\Si$ (by abuse of notation we use the same letters as in Lemma
\ref{lemma31})
\begin{align} \label{split}
\begin{aligned}
& \Pi: \tf \bigl( \cE^{(A_1 \ldots A_k)}{}_{(B_1 \ldots B_k)} \bigr) \to \cE^{(a_1 \ldots a_k)},  \\
& \Si: \cE^{(a_1 \ldots a_k)} \to \tf \bigl( \cE^{(A_1 \ldots A_k)}{}_{(B_1 \ldots B_k)} \bigr),
\end{aligned}
\end{align}
i.e.\ $\Pi \circ \Si = \id$. The existence of projectively invariant splitting $\Si$ follows from 
the BGG machinery. Moreover, $\Si$ is unique in the projectively flat case and, under certain normalization
condition, also in the curved case. This is known as the 
{\em BGG splitting operator} and henceforth will be our choice of $\Si$.
It follows from the BGG machinery that $\Si$ has the following essential property:

\begin{proposition} \label{prol}
Assume $(M,[\na])$ is a projectively flat structure and let $\si^{a_1 \ldots a_k} \in \cE^{(a_1 \ldots a_k)}$.
Then $\Ph(\si) = 0$ if and only if $\na \Si(\si)=0$. That is, there is a bijective correspondence
$$
\{\si^{a_1 \ldots a_k} \in \cE^{(a_1 \ldots a_k)} \mid \Ph(\si) = 0 \} \, \stackrel{1:1}{\longleftrightarrow} \,
\boxtimes^k \mathfrak{sl}(3,\mathbb{R}).
$$
\end{proposition}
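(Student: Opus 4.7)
The plan is to invoke the general first BGG prolongation theory specialized to the projective parabolic geometry for the representation $\boxtimes^k\mathfrak{sl}(3,\mathbb{R})$. I would organize the argument in three steps.

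First, I would construct the splitting operator $\Sigma$ of \eqref{split} by extending Lemma \ref{lemma31} inductively in $k$. The top-slot injection $\sigma^{a_1\ldots a_k}\mapsto \mathbb{Y}_{a_1}{}^{A_1}{}_{B_1}\cdots\mathbb{Y}_{a_k}{}^{A_k}{}_{B_k}\sigma^{a_1\ldots a_k}$ is corrected in the lower slots by universally prescribed combinations of $\nabla^j\sigma$ and Schouten-tensor contributions, fixed uniquely by the BGG normalization condition that the Kostant codifferential applied to $\nabla\Sigma(\sigma)$ vanishes. This ensures projective invariance of $\Sigma$ and $\Pi\circ\Sigma=\id$ by construction. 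A direct bookkeeping of this construction shows that the projection of the one-form-valued tractor $\nabla\Sigma(\sigma)$ onto the irreducible component $\tf\bigl(\cE_{(c_0\ldots c_k)}{}^{(a_1\ldots a_k)}\bigr)$ equals precisely $\Phi(\sigma)$.

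Second, in the projectively flat case, where the tractor curvature $\Omega\equiv 0$, I would show that $\nabla\Sigma(\sigma)$ is determined entirely by its top-slot projection $\Phi(\sigma)$, so that $\Phi(\sigma)=0$ forces $\nabla\Sigma(\sigma)=0$. The method is to expand $\nabla\Sigma(\sigma)$ slot-by-slot using the analogue of \eqref{tractcovderexp} for $\tf(\cE^{(A_1\ldots A_k)}{}_{(B_1\ldots B_k)})$, where every lower-slot component is a differential consequence of the top component modulo contractions with the tractor curvature $\Omega$, and flatness kills those curvature terms. The reverse implication $\nabla\Sigma(\sigma)=0\Rightarrow\Phi(\sigma)=0$ is immediate from the top-slot identification in the previous step. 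I expect this propagation argument to be the main technical hurdle, since it requires tracking the explicit slot-wise tractor formulas, but it is exactly the standard assertion that the BGG prolongation connection coincides with the normal tractor connection in the flat case.

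Third, I would deduce the bijection. On a projectively flat manifold, parallel sections of the tractor bundle $\tf(\cE^{(A_1\ldots A_k)}{}_{(B_1\ldots B_k)})$ are in bijection with its fiber at any basepoint $p_0\in M$, which as a $\widetilde{SL}(3,\mathbb{R})$-module is the Cartan power $\boxtimes^k\mathfrak{sl}(3,\mathbb{R})$. Combining this with $\Pi\circ\Sigma=\id$ and the equivalence $\Phi(\sigma)=0\Leftrightarrow \nabla\Sigma(\sigma)=0$ from step two, the assignment $\sigma\mapsto\Sigma(\sigma)$ followed by evaluation at $p_0$ yields the asserted bijection between $\ker\Phi$ and $\boxtimes^k\mathfrak{sl}(3,\mathbb{R})$.
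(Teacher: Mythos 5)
Your proposal is correct and takes essentially the same approach as the paper. In fact the paper does not supply a proof of Proposition~\ref{prol} at all: immediately before stating it, the authors write that ``the existence of projectively invariant splitting $\Si$ follows from the BGG machinery'' and that ``it follows from the BGG machinery that $\Si$ has the following essential property,'' and then cite nothing further. Your three steps are precisely the standard BGG prolongation argument that the paper invokes by reference. A minor caveat on your step two: the phrase ``every lower-slot component is a differential consequence of the top component modulo contractions with $\Omega$'' slightly misstates the mechanism. The precise argument proceeds by homogeneity induction: $\nabla\Si(\si)$ lies in $\ker\pa^*$ by the normalization of $\Si$, its lowest-homogeneity part projects onto $\Phi(\si)$ in Lie algebra cohomology, and since $\ker\pa^*$ meets $\mathrm{im}\,\pa$ trivially in each homogeneity, vanishing of $\Phi(\si)$ forces the lowest slot of $\nabla\Si(\si)$ to vanish; one then climbs the filtration using flatness, which is where $\Omega\equiv 0$ enters. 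But this is a matter of sharpening the phrasing, not a gap in the idea; your overall structure, including the identification of parallel tractors with the fiber $\boxtimes^k\mathfrak{sl}(3,\mathbb{R})$ via evaluation at a basepoint, is exactly what the BGG machinery delivers.
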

We recall the notation $\boxtimes^k \mathfrak{sl}(3,\mathbb{R})$ for the 
$k$-th Cartan power of $\mathfrak{sl}(3,\mathbb{R})$
in $\otimes^k \mathfrak{sl}(3,\mathbb{R})$, $k\in\mathbb{N}$.

\subsection{Projective connection on symplectic spinors}
\label{spincon}

From now on we assume $n=1$ and use the symplectic form $\bfom_{ab} = \bfep_{ab} \in \cE_{[ab]}(3)$. 
Then the symplectic spinor bundle can be realized and described in projective geometry similarly to the spinor bundle 
in conformal geometry, cf. \cite{er}. It is convenient to induce bundles
from symplectic frames of $\cE_a(\frac32)$ where the pairing $\langle \mu, \nu \rangle  = \bfom^{ab} \mu_a\nu_b$
is projectively invariant for $\mu_a, \nu_a \in \cE_a(\frac32)$. 
We have a weighted version of symplectic gamma matrices $\bfga_a$,
$$
\bfga_a \bfga_a  - \bfga_b \bfga_a = 2 \bfom_{ab}, \quad \bfga_a \in \cE_a \otimes (\End \cS)(\tfrac32),
$$
cf.\ \eqref{gammaidentity1}, which generate the bundle of weighted Clifford algebras.
The projective transformation \eqref{trans} implies
$$
\wh{\na}_a \mu_b = \na_a \mu_b + \tfrac12 \Up_{a} \mu_{b}^{} - \Up_b \mu_a = 
\na_a \mu_b  - \Ga_a{}^c{}_b \mu_c, \quad \text{where} \quad \Ga_{abc} = \bfom_{a(b} \Up_{c)}.
$$
Considering $\Ga_a{}^c{}_b$ as a one form valued in the bundle of Lie algebras $\mathfrak{sl}(2,\mathbb{R})$ 
and using the action \eqref{action}, we obtain
$$
\wh{\na}_a \ph = \na_a \ph - \tfrac14 \Ga_a{}^c{}_b \bfga_c \bfga^b \ph
= \na_a \ph + \tfrac14 \Up_a \ph - \tfrac14 \bfga_a \Up_s \bfga^s \ph, \qquad \ph \in \cS.
$$
By \eqref{trans}, we obtain the weighted version
\begin{align}\label{covderonsymplspinors}
\hat{\nabla}_a\varphi=\nabla_a\varphi+(w+\tfrac{1}{4})\Upsilon_a\varphi
-\tfrac{1}{4}\bfga_a \Upsilon_s \bfga^s\varphi, \quad \varphi\in{\cal S}(w).
\end{align}
\begin{lemma}\label{sdirinvariance}
The symplectic Dirac operator $\Dir_s=\bfga^a\nabla_a$ acting on $\cS(w)$ is projectively
invariant differential operator if and only if $w=-\frac34$,
\begin{align}    
\Dir_s: {\mathcal S}(-\tfrac34)\to {\mathcal S}(-\tfrac94).
\end{align}
\end{lemma}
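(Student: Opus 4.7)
The plan is to apply $\bfga^a$ to the projective transformation rule \eqref{covderonsymplspinors} and read off the weight $w$ for which the $\Upsilon$-dependent terms cancel identically. Since $\bfga_a$ carries weight $\tfrac{3}{2}$ and $\bfom^{ab}$ carries weight $-3$, both $\bfga_a$ and $\bfga^a = \bfom^{ab}\bfga_b$ are projectively invariant under the change of scale, so the only contribution to the discrepancy comes from the transformation of $\nabla_a$ on $\cS(w)$ itself.

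Using that $\Upsilon_s$ is scalar-valued and hence commutes with Clifford multiplication, applying $\bfga^a$ to \eqref{covderonsymplspinors} yields
$$
\bfga^a\hat{\nabla}_a\varphi - \Dir_s\varphi = (w+\tfrac{1}{4})\,\Upsilon_a\bfga^a\varphi - \tfrac{1}{4}\,\Upsilon_s\,\bigl(\bfga^a\bfga_a\bfga^s\bigr)\varphi.
$$
The key algebraic input is the identity $\bfga^a\bfga_a\bfga^s = -2n\,\bfga^s$, which is immediate from \eqref{gammaidentity2}: the contraction $\bfga^a\bfga_a = -2n$ is a (Clifford) scalar and therefore commutes with $\bfga^s$. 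Substituting this with $n=1$ reduces the right-hand side to $(w+\tfrac{3}{4})\,\Upsilon_a\bfga^a\varphi$, which vanishes for every choice of $\Upsilon_a$ if and only if $w = -\tfrac{3}{4}$, proving the invariance claim.

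For the target weight, $\bfga^a$ has weight $-3+\tfrac{3}{2}=-\tfrac{3}{2}$ and $\nabla_a$ is weight-preserving, so $\Dir_s$ shifts weights by $-\tfrac{3}{2}$ and therefore lands in $\cS(-\tfrac{9}{4})$. No serious obstacle is expected; the calculation is purely algebraic once the weights of $\bfga_a$ and $\bfom_{ab}$ are correctly tracked. The only mild subtlety is to distinguish the projectively invariant weighted gamma-matrices of Section~\ref{spincon} from the unweighted ones of Section~\ref{ss}, so that one applies the transformation \eqref{covderonsymplspinors} and not a spurious extra term coming from rescaling the Clifford generators themselves.
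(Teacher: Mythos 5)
Your proposal is correct and is exactly the paper's argument: the authors also prove the lemma by contracting the transformation rule \eqref{covderonsymplspinors} with $\bfga^a$ and using $\bfga^a\bfga_a=-2n$ (with $n=1$) to see that the $\Upsilon$-terms combine to $(w+\tfrac34)\Upsilon_a\bfga^a\varphi$. Your computation of the target weight from $\bfga^a\in\cE^a\otimes(\End\cS)(-\tfrac32)$ is likewise the standard bookkeeping the paper leaves implicit.
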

\begin{proof}
This results from the contraction of \eqref{covderonsymplspinors} by $\bfga^a$. 
\end{proof}

Next we observe there is a spinor version of the  Thomas $D$-operator
$D_A: \cE(w) \to \cE_A(w-1)$, \cite{beg}.  Its analogue acting on symplectic spinors is given as follows:

\begin{theorem}
The tractor $D$-operator acting on $\cS(w)$ and defined by
\begin{align} \label{tractorDoper}
& \hspace{2cm} D_A :{\mathcal S}(w)\to {\mathcal S}(w-1)\otimes{\mathcal E}_A,
\nonumber \\
& D_A=(4w+3)(w+\tfrac{1}{4})Y_A+(4w+3)X^b{}_A\nabla_b+\bfga_bX^b{}_A \Dir_s,
\end{align}
is a projectively  invariant first order differential operator.
\end{theorem}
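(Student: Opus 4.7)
The plan is to verify projective invariance by direct computation: apply the projective rescaling formulas to each of the three terms in $D_A$ acting on some $\varphi \in \cS(w)$, and show that all $\Upsilon$-dependent contributions cancel so that $\hat{D}_A \varphi = D_A \varphi$.

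First I would rewrite $\hat{D}_A\varphi$ term by term using the injector transformations $\hat{Y}_A = Y_A - X_A{}^a \Upsilon_a$, $\hat{X}^b{}_A = X^b{}_A$ (recorded in Section~\ref{projgeom}), together with formula \eqref{covderonsymplspinors} for $\hat{\nabla}_a$ on $\cS(w)$. This immediately produces contributions proportional to $Y_A\varphi$, to $X^b{}_A \nabla_b\varphi$, to $X_A{}^a \Upsilon_a\varphi$, and to the quadratic Clifford expression $X^b{}_A\bfga_b \Upsilon_s \bfga^s\varphi$, all with explicit coefficients in $w$.

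The next step is to compute $\hat{\Dir}_s \varphi$ for arbitrary $w$ by substituting \eqref{covderonsymplspinors} into $\hat{\Dir}_s\varphi = \bfga^a\hat{\nabla}_a\varphi$. The gamma-gamma term simplifies using the Clifford trace identity $\bfga^a\bfga_a = -2$ from \eqref{gammaidentity2} with $n=1$, yielding
\begin{align*}
\hat{\Dir}_s \varphi = \Dir_s\varphi + (w+\tfrac34)\,\Upsilon_s\bfga^s\varphi.
\end{align*}
This reproduces Lemma~\ref{sdirinvariance} at $w=-\tfrac34$ and quantifies the defect for other weights. Multiplying by $\bfga_b X^b{}_A$ provides the third piece of $\hat{D}_A\varphi$.

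The main bookkeeping step is collecting the coefficients of the three $\Upsilon$-linear structures: of $X_A{}^a\Upsilon_a\varphi$, of $X^b{}_A\Upsilon_b\varphi$ (these coincide up to sign) and of $X^b{}_A\bfga_b\Upsilon_s\bfga^s\varphi$. The scalar $\Upsilon$-terms from the first and second pieces combine as $-(4w+3)(w+\tfrac14)+(4w+3)(w+\tfrac14)=0$, while the quadratic Clifford $\Upsilon$-term from the $\hat\nabla_b$ part, namely $-\tfrac{4w+3}{4}X^b{}_A\bfga_b\Upsilon_s\bfga^s\varphi$, cancels precisely against $(w+\tfrac34)X^b{}_A\bfga_b\Upsilon_s\bfga^s\varphi$ coming from the transformation of $\Dir_s$, because $\tfrac{4w+3}{4}=w+\tfrac34$. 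Thus $\hat{D}_A\varphi = D_A\varphi$, establishing invariance.

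The only place demanding care is the third, quadratic-in-$\bfga$ cancellation, because one must respect the order of the Clifford generators and track the weight~$\tfrac32$ built into $\bfga_a$; this is also the conceptual content of the calculation, since the specific coefficients $(4w+3)(w+\tfrac14)$ and $(4w+3)$ in \eqref{tractorDoper} are forced precisely by demanding these cancellations for all $w$.
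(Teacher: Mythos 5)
Your computation is correct and is exactly the argument the paper has in mind (the paper's own proof is just the one-line assertion that invariance is ``a straightforward consequence of the transformation property~\eqref{covderonsymplspinors}''); you have filled in the details, including the key identity $\hat{\Dir}_s\varphi=\Dir_s\varphi+(w+\tfrac34)\Upsilon_s\bfga^s\varphi$ which uses $\bfga^a\bfga_a=-2$ from~\eqref{gammaidentity2} with $n=1$, and the coefficient match $\tfrac{4w+3}{4}=w+\tfrac34$. One small slip in wording: $X_A{}^a$ and $X^b{}_A$ denote the same invariant injector, so the two scalar $\Upsilon$-terms are literally the same tensor, not ``the same up to sign''; they cancel because their scalar coefficients $\mp(4w+3)(w+\tfrac14)$ are opposite, which is what your final identity correctly records.
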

The projective invariance of $D_A$ is a straightforward consequence of
the transformation property \eqref{covderonsymplspinors}. In the case $w=-\frac34$, $D_A$
reduces to the invariant bottom slot of the standard tractor bundle and hence $\Dir_s$ is projectively invariant 
operator for this value of $w$ in a full agreement with Lemma \ref{sdirinvariance}. Another interesting case 
corresponds to $w=-\frac14$ for which $D_A$ reduces to the projectively invariant 
symplectic twistor  differential operator acting on spinors,
$$
\nabla_a+\tfrac{1}{2}\bfga_a\Dir_s: \cS(-\tfrac14) \to \cE_a \boxtimes \cS(-\tfrac14).
$$

\begin{theorem}
The tractor operator ${\mathbb D}$ acting on $\cS(w)$ and defined by
\begin{align} \label{doubleD}
{\mathbb D}^B{}_{A} & :\quad {\fam2 S}
\rightarrow \tf ( {\cal E}^B{}_{A}) \otimes \cS(w),
\\ \nonumber
& \ph \mapsto {\mathbb D}^B{}_{A} \ph:=
\left( \begin{array}{c} 0  \\ 
\frac{1}{4}\bfga^a\bfga_b \ph +(-\frac{w}{3}+\tfrac{1}{4})\delta^a{}_{b} \ph \quad \mid \quad \frac{2w}{3} \ph \\
\nabla_a \ph 
\end{array} \right)
\end{align}
or in terms of injectors by
\begin{align}
{\mathbb D}^B{}_{A} = Y_a{}^AX^b{}_B\big(\tfrac{1}{4}\bfga^a\bfga_b 
+(-\tfrac{w}{3}+\tfrac{1}{4})\delta_{b}{}^{a}\big)
+\tfrac{2w}{3}X^AY_B
+X^AX^b{}_B\nabla_b,
\end{align}
is a projectively invariant first order differential operator. We have the 
commutation relation of invariant differential  operators 
\begin{align}\label{commutdsprolong}
\Dir_s\, {\mathbb D}^B{}_{A} =
{\mathbb D}^B{}_{A} \, \Dir_s: \cS (-\tfrac34) \to \tf (\cE^B{}_A) \otimes   \cS (-\tfrac94) 
\end{align}
\end{theorem}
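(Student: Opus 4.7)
The proof of both claims proceeds by direct slot-wise computation, leveraging the injector transformation rules, the weighted spinor transformation \eqref{covderonsymplspinors}, and the symplectic Clifford identities \eqref{gammaidentity1}--\eqref{gammaidentity3}. I would treat projective invariance first, as its verification fixes the internal coefficients of $\mathbb{D}^B{}_A$, and then handle the commutation with $\Dir_s$.

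For projective invariance, I substitute $\hat Y^A{}_a = Y^A{}_a + X^A\Upsilon_a$ and $\hat Y_B = Y_B - X_B{}^b\Upsilon_b$ into the injector form of $\mathbb{D}^B{}_A$ (noting $X^A$ and $X^b{}_B$ are invariant), and then replace $\hat\nabla_b \phi$ by $\nabla_b \phi + (w+\tfrac14)\Upsilon_b\phi - \tfrac14\bfga_b\Upsilon_s\bfga^s\phi$. All $\Upsilon$-dependent contributions collect in the $X^A X^b{}_B$-slot, and reorganising $\bfga_b\Upsilon_s\bfga^s = \Upsilon_s\bfga^s\bfga_b + 2\Upsilon_b$ via \eqref{gammaidentity1} exhibits the cancellation. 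The coefficients $-\tfrac{w}{3}+\tfrac14$ and $\tfrac{2w}{3}$ in \eqref{doubleD} are exactly what is needed to balance the $\Upsilon$-contributions coming from $\hat Y^A{}_a$, $\hat Y_B$, and the Clifford part of $\hat\nabla$ for every weight $w$; the trace-free condition $\mu^a{}_a + \varphi = 0$ is also easily checked using $\bfga^a\bfga_a = -2$.

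For the commutation \eqref{commutdsprolong} I compute $\Dir_s(\mathbb{D}^B{}_A\phi)$ and $\mathbb{D}^B{}_A(\Dir_s\phi)$ separately, keeping in mind that the value of $w$ in $\mathbb{D}^B{}_A$ is $-\tfrac34$ on the left-hand side but $-\tfrac94$ on the right, since $\Dir_s$ lowers the density weight by $\tfrac32$. Using \eqref{covderinj} to move $\nabla_r$ past the injectors produces lower-slot injector terms together with Schouten-type contributions $\Rho_{r\cdot}$. The resulting second-order piece $\bfga^r X^A X^b{}_B \nabla_r\nabla_b\phi$ is symmetrised in $(r,b)$ modulo the curvature of $\nabla$ on $\cS$; since $n=1$ the projective Weyl tensor is absent and only the symmetric Schouten tensor survives. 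The Clifford identities $\bfga^r\bfga^a\bfga_r = -(2n+2)\bfga^a$ and $\bfga^r\bfga_b\bfga_r = (2-2n)\bfga_b$ (the latter vanishing for $n=1$), combined with the covariant constancy $\nabla\bfom = 0$ and the precise values of the coefficients fixed in the previous step, force the slot-by-slot equality of the two expressions at each of the four injector components of $\tf(\cE^B{}_A)$.

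The main obstacle is the bookkeeping: each side of the commutation involves derivative terms up to second order spread across four tractor slots, and the required cancellations draw simultaneously on several $\bfga$-reordering identities and on the difference of the parameters $w = -\tfrac34$ versus $w = -\tfrac94$. A subsidiary obstacle is that projective flatness is \emph{not} assumed here, so one must verify that the Schouten/Ricci contributions produced when $\nabla$ is moved past the $Y$-injectors cancel in each slot; this rests on the symmetry $\Rho_{ab} = \Rho_{ba}$ together with the dimension-two specialisation. The structural reason the commutation must hold is that $\Dir_s$ is projectively invariant by Lemma \ref{sdirinvariance}, and $\mathbb{D}^B{}_A$ is the tractor prolongation realising the naturality of $\Dir_s$ across weights.
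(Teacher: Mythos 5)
Your approach coincides with the paper's: both claims are handled by direct slot-by-slot computation using \eqref{covderonsymplspinors}, the injector covariant-derivative rules \eqref{covderinj}, the symplectic Clifford reorderings, and the dimension-two curvature identity $R_{ab}{}^c{}_d = \delta_a{}^c \Rho_{bd} - \delta_b{}^c \Rho_{ad}$, with the weight shift $-\tfrac34 \mapsto -\tfrac94$ accounting for the distinct forms of $\mathbb{D}$ on the two sides of \eqref{commutdsprolong}. One small slip: the claimed identity $\bfga^r\bfga_b\bfga_r = (2-2n)\bfga_b$ is incorrect; in the paper's conventions both $\bfga^r\bfga^a\bfga_r$ and $\bfga^r\bfga_b\bfga_r$ equal $-(2n+2)$ times the respective $\bfga$, so neither vanishes at $n=1$ (this is exactly why the $\mathbb{Y}$-slot in the paper's computation cancels, via $-\tfrac14\bfga^c\bfga^a\bfga_c - \bfga^a = 0$); be careful to use the correct constant when you carry out the bookkeeping.
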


\begin{proof}
The first claim, related to projective invariance, 
is straightforward and follows from the transformation properties 
of injectors $Y_a{}^A, X^b{}_B, X^A, Y_B$ and $\nabla$.
The second claim \eqref{commutdsprolong} follows from an explicit 
computation.  Then 
\begin{align*} 
\Dir_s\, & {\mathbb D}^B{}_A  =
(-P_{ca}{\mathbb X}^b{}^{A}{}_B-{\mathbb Y}_a{}_{B}{}^A\delta_c{}^b)
(\tfrac{1}{4}\bfga^c\bfga^a\bfga_b +\tfrac{1}{2}\bfga^c\delta_{b}{}^{a})\nonumber\\
&-\tfrac{1}{2}\bfga^c({\mathbb Y}_{c}{}^A{}_B+P_{ac}{\mathbb X}^a{}^{A}{}_B)\nonumber 
+\bfga^c({\mathbb Z}_c{}^{b}{}^A{}_B-{\mathbb W}^A{}_{B}\delta_c{}^b)\nabla_b
\nonumber \\
& + {\mathbb Z}^{b}{}_c{}^A{}_B(\tfrac{1}{4}\bfga^c\bfga^a\bfga_b +
\tfrac{1}{2}\bfga^c\delta_{b}{}^{a})\nabla_c \nonumber 
-\tfrac{1}{2}{\mathbb W}^A{}_B\bfga^c\nabla_c+{\mathbb X}^b{}^{A}{}_B\bfga^c\nabla_c\nabla_b\nonumber \\
& ={\mathbb Y}_{a}{}^A{}_B(-\frac{1}{4}\bfga^c\bfga^a\bfga_c -
\tfrac{1}{2}\bfga^a-\tfrac{1}{2}\bfga^a) \nonumber 
+{\mathbb Z}^{b}{}_a{}^A{}_B(\bfga^a\nabla_b+\tfrac{1}{4}\bfga^c\bfga^a\bfga_b\nabla_c
+\tfrac{1}{2}\bfga^c\delta_b{}^a\nabla_c)\nonumber \\
& +{\mathbb W}^A{}_B(-\bfga^c\delta_c{}^b\nabla_b-\tfrac{1}{2}\bfga^c\nabla_c)\nonumber 
+ {\mathbb X}^b{}^{A}{}_B( - \tfrac{1}{4} P_{ca}\bfga^c\bfga^a\bfga_b - \tfrac{1}{2} P_{ca}\bfga^c\delta_b{}^a
\nonumber \\
& -\tfrac{1}{2}\bfga^cP_{ac}+\bfga^c\nabla_c\nabla_b).
\end{align*}
The identities \eqref{gammaidentity1} and \eqref{gammaidentity2} imply
the coefficient of ${\mathbb Y}_{a}{}^A{}_B$ is zero. The operator
of ${\mathbb W}^A{}_B$ quotients from the right through $\Dir_s$. 

Let us now discuss the operator coefficient of 
${\mathbb X}^b{}^{A}{}_B$. The curvature of $\nabla$
acts on symplectic spinors as 
$\nabla_a\nabla_b-\nabla_b\nabla_a=-\frac{1}{4}R_{ab}{}^c{}_d\bfga_c\bfga^d$. 
In the real dimension $2$ holds the curvature identity 
$R_{ab}{}^c{}_d=\delta_a{}^cP_{bd}-\delta_b{}^cP_{ad}$, 
hence  
\begin{align*}
\bfga^cR_{cb}{}^e & {}_f\bfga_e\bfga^f  =
\bfga^c(\delta_c{}^eP_{bf}-\delta_b{}^eP_{cf})\bfga_e\bfga^f 
\nonumber \\
& = -2\bfga^fP_{bf} - \bfga^c\bfga_b\bfga^fP_{cf}
\nonumber 
 = -4\bfga^fP_{bf} - \bfga^c\bfga^f\bfga_bP_{cf}.
\end{align*}
The substitution for $\bfga^c\nabla_c\nabla_b$ results in 
the contribution ${\mathbb X}^{bA}{}_B\nabla_b\Dir_s$.

The operator coefficient of ${\mathbb Z}^{b}{}_a{}^A{}_B$ can be rewritten as
$\frac{1}{2}\bfga^a\nabla_b-\frac{1}{2}\bfga_b\nabla^a
+\frac{1}{4}\bfga^a\bfga_b\Dir_s
+\frac{1}{2}\delta^a{}_b\Dir_s$, hence in the real dimension $2$ we get
\begin{align*}
\tfrac{1}{2}\bfga^a\nabla_b-\tfrac{1}{2}\bfga_b\nabla^a=
\tfrac{1}{2}\delta^a{}_b\bfga^c\nabla_c=\tfrac{1}{2}\delta^a{}_b\Dir_s,
\end{align*}
because any $2$-form is proportional to $\bfom_{ab}$ and the application
of trace results in the constant $\frac{1}{2}$ on the right hand side.
A further simplification gives
${\mathbb Z}^{b}{}_a{}^A{}_B(\delta^a{}_b+\frac{1}{4}\bfga^a\bfga_b)\Dir_s$.

All the three non-trivial contributions quotient from the right by $\Dir_s$,
and it is elementary to see that the overall expression equals to 
the composition ${\mathbb D}^B{}_A \Dir_s$ as claimed in
\eqref{commutdsprolong}. The proof is complete.
\end{proof}

It is an immediate consequence of \eqref{doubleD} that $\mathbb{D}$ preserves 
irreducible components
$\cS_\pm(w)$.  We shall need the explicit formulae for 
${\mathbb D}^B{}_A$ acting on  $\ph \in \cS(-\frac{3}{4})$ and $\ps \in \cS (-\frac{9}{4})$:  
\begin{align}\label{doper34}
& {\mathbb D}^B{}_A \ph = Y^A_aX^b_B(\tfrac{1}{4}\bfga^a\bfga_b 
+\tfrac{1}{2}\delta_b{}^{a})\ph
-\tfrac{1}{2}X^AY_B \ph +X^AX^b_B\nabla_b \ph ,
\\ \label{doper94}
& {\mathbb D}^B{}_A \ps = Y^A_aX^b_B(\tfrac{1}{4}\bfga^a\bfga_b 
+\delta_b{}^{a}) \ps
-\tfrac{3}{2}X^AY_B \ps  + X^AX^b_B\nabla_b \ps.
\end{align}
Combining the BGG splitting $\Si$ in \eqref{split} with the operator $\mathbb{D}$, we obtain
a differential operator $\cO^\si$ on symplectic spinors with the prescribed symbol $\si$. 
Specifically, we have the map
\begin{align} \label{cQ}
\begin{aligned}
\cQ^k: & \, \cE^{(a_1 \ldots a_k)} \to \Diff^k(\cS(w)), \\
&\, \si^{a_1 \ldots a_k} \mapsto 
\bigl( \Si(\si) \bigr)^{A_1 \ldots A_k}{}_{B_1 \ldots B_k}  
\mathbb{D}^{B_1}{}_{A_1} \ldots \mathbb{D}^{B_k}{}_{A_k},
\end{aligned}
\end{align}
where $\Diff^k(\cS(w))$ denotes the space of differential operators acting 
on $\cS(w)$ of order $\leq k$.
It follows from the properties of $\Si$ and $\mathbb{D}$ that $\cQ(\si)$ 
has indeed the symbol $\si$, i.e., 
for $\si^{a_1 \ldots a_k} \in \cE^{(a_1 \ldots a_k)}$ we have 
\begin{equation} \label{cQsi} 
\cQ^k(\si) = \si^{a_1\ldots a_k}\nabla_{a_1}\ldots\nabla_{a_k}+\mathrm{lot}\, : \cS(w) \to \cS(w)
\end{equation}
for any $w \in \mathbb{R}$. Let us note that it follows from invariance
of $\Si$ and $\mathbb{D}$ that $\cQ^k$ yields the projectively invariant bilinear operator
$(\si,\ph) \mapsto \bigl(\cO^k(\si) \bigr) (\ph)$ for $\ph \in \cS(w)$.

Recall that $\Dir_s$ is projectively invariant for a specific weight computed in 
Lemma \ref{sdirinvariance}. To construct higher symmetries of $\Dir_s$, we shall need 
operators $\cQ^k(\si)$ for these specific weights.
We put
\begin{align} \label{Osi}
\begin{aligned}
&\cO^\si := \cQ^k(\si): \cS(-\tfrac34) \to \cS(-\tfrac34), \\
&\ol{\cO}^\si: = \cQ^k(\si): \cS(-\tfrac94) \to \cS(-\tfrac94),
\end{aligned}
\end{align}
where $\cO^\si$ preserves $\cS_\pm(-\frac34)$ and similarly for $\ol{\cO}^\si$.
The crucial information relating $\cO^\si$ to $\Ph(\si)$ 
is provided by the following lemma:

\begin{lemma} \label{naSi}
Let  $\si$ and $\Si(\si)$ be as above. Then
\begin{align*}
&\bigr[\na_b \bigl( \Si(\si) \bigr)\bigr]^{A_1 \ldots A_k}{}_{B_1 \ldots B_k}  
\mathbb{D}^{B_1}{}_{A_1} \ldots \mathbb{D}^{B_k}{}_{A_k} = \\
& = (\tfrac14)^k\bigl( \Ph(\si) \bigr)_b{}^{c_1 \ldots c_k a_1 \ldots a_k}
\bfga_{c_1} \ldots  \bfga_{c_k} \bfga_{a_1} \ldots  \bfga_{a_k}:
\cS(w) \to \cE_b \otimes \cS(w).
\end{align*}
\end{lemma}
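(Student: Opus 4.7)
The plan is to establish the identity by a slot-by-slot expansion in a fixed projective scale, using the defining property of the BGG splitting operator $\Si$ together with the explicit injector form of the tractor operator $\mathbb{D}$. First I would unpack $\Si(\si)^{A_1\ldots A_k}{}_{B_1\ldots B_k}$ via the injector decomposition in each tractor factor: the four slots $\mathbb{Y},\mathbb{Z},\mathbb{W},\mathbb{X}$ have coefficients determined recursively by $\si$ and its symmetrized covariant derivatives, with the top slot coefficient being $\si^{a_1\ldots a_k}$ (cf.~\eqref{splittingsl3adjoint} for $k=1$). Applying $\na_b$ using the Leibniz rule and the injector identities \eqref{covderinj}, the BGG normalization condition $\partial^*(\na\Si(\si))=0$ forces the top slot of $\na_b\Si(\si)$ to vanish identically, and the remaining slot contributions become algebraically equivalent (modulo the image of $\partial$) to $\Phi(\si)$, with the lower order curvature corrections absorbed into the ``lot'' of $\Phi(\si)=\tf(\na\cdots\na\si)+\mathrm{lot}$.

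Next I would expand the composition $\mathbb{D}^{B_1}{}_{A_1}\cdots\mathbb{D}^{B_k}{}_{A_k}$ acting on $\cS(-\tfrac34)$ using \eqref{doper34}, which in injector form has vanishing $\mathbb{Y}$-slot, $\mathbb{Z}$-slot carrying $\tfrac14\bfga^a\bfga_b+\tfrac12\de^a_b$, scalar $\mathbb{W}$-slot, and $\mathbb{X}$-slot carrying $\na_b$. The pairing with $\na_b\Si(\si)$ is governed by the injector contraction identities \eqref{injoperpartscalprod} and \eqref{injoperscalprod}. Because the $\mathbb{Y}$-slot of $\mathbb{D}$ is zero, the $\mathbb{X}$-slot of $\na_b\Si(\si)$ decouples from the pairing, while the $\mathbb{X}$-slot of $\mathbb{D}$ meets only the vanishing top slot of $\na_b\Si(\si)$; the dominant surviving contribution thus comes from the $\mathbb{Z}$-slot of each $\mathbb{D}$ meeting the $\mathbb{Z}$-slot of $\na_b\Si(\si)$. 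The $\tfrac12\de^a_b$ piece of each $\mathbb{D}$ factor drops out against the trace-free condition on $\Phi(\si)$ after all contractions are performed, leaving only the $\tfrac14\bfga^{c_i}\bfga_{a_i}$ piece per factor. Iterating $k$ times and reordering via \eqref{gammaidentity1}--\eqref{gammaidentity3} yields $(\tfrac14)^k$ times $\bfga^{c_1}\bfga_{a_1}\cdots\bfga^{c_k}\bfga_{a_k}$ contracted into $(\Phi\si)_b{}^{c_1\ldots c_k a_1\ldots a_k}$, after the usual raising of indices by $\bfom$.

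The main obstacle is controlling the combinatorics of symmetrizations and trace-free projections in $\tf(\cE^{(A_1\ldots A_k)}{}_{(B_1\ldots B_k)})$ across all $k$ tractor factors, and matching the curvature corrections arising from $\na$ hitting the injectors to those present in the ``lot'' of $\Phi(\si)$. I would handle this by induction on $k$: the base case $k=1$ is a direct computation from \eqref{splittingsl3adjoint}, \eqref{bgginf} and \eqref{doper34}, amounting essentially to the content already used to derive \eqref{1st}. The inductive step uses the Leibniz rule for $\mathbb{D}$ applied to tractor tensor products together with the universal characterization of $\Si$ as the unique lift annihilated by $\partial^*\na$, and the uniform factor $(\tfrac14)^k$ emerges from the single factor $\tfrac14$ in the $\mathbb{Z}$-slot of each $\mathbb{D}$.
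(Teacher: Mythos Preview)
Your outline follows the same two-step skeleton as the paper: first describe the injector expansion of $\na_b\Si(\si)$ coming from BGG theory, then expand $\mathbb{D}^{B_1}{}_{A_1}\cdots\mathbb{D}^{B_k}{}_{A_k}$ and pair. The paper's version of the first step is sharper than yours: it records directly (citing \cite{cd}) that
\[
\na_b\bigl(\Si(\si)\bigr) \;=\; \mathbb{Z}\cdots\mathbb{Z}\cdot\Phi(\si) \;+\; \sum_{i\geq 1}\underbrace{\mathbb{X}\cdots\mathbb{X}}_{i}\underbrace{\mathbb{Z}\cdots\mathbb{Z}}_{k-i}\cdot\mu_i,
\]
with no $\mathbb{Y}$- or $\mathbb{W}$-injectors whatsoever and with unspecified sections $\mu_i$ in the lower slots. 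Your phrase ``algebraically equivalent modulo the image of $\partial$ to $\Phi(\si)$'' is vaguer than needed and mildly misleading: the lower slots are not determined by $\Phi(\si)$, they simply turn out to be irrelevant.

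The genuine gap is in your argument killing the cross terms. The claim ``because the $\mathbb{Y}$-slot of $\mathbb{D}$ is zero, the $\mathbb{X}$-slot of $\na_b\Si(\si)$ decouples'' holds for a single $\mathbb{D}$ but \emph{fails} for the composition $\mathbb{D}^{B_1}{}_{A_1}\cdots\mathbb{D}^{B_k}{}_{A_k}$: the $\nabla$ in the $\mathbb{X}$-slot of an outer $\mathbb{D}$ hits the injectors of the inner $\mathbb{D}$'s via \eqref{covderinj} and does produce $\mathbb{Y}$'s --- witness the $\mathbb{X}\,\mathbb{Y}$-term in \eqref{compositiondd} already for $k=2$. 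Your proposed induction does not obviously avoid this, since the inductive step faces the same interaction. The paper disposes of the issue with a one-line homogeneity argument instead: set $h(\mathbb{Y})=1$, $h(\mathbb{Z})=h(\mathbb{W})=0$, $h(\mathbb{X})=-1$, extended additively to concatenations. Every term of the full composition $\mathbb{D}\cdots\mathbb{D}$ then has total homogeneity $\leq 0$ (each $\mathbb{Y}$ created by $\nabla$ acting on an inner injector is paid for by the $\mathbb{X}$ carrying that $\nabla$), while the $\mathbb{X}\cdots\mathbb{X}\,\mathbb{Z}\cdots\mathbb{Z}$ terms of $\na_b\Si(\si)$ with at least one $\mathbb{X}$ have homogeneity $\leq -1$. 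Since a nonzero injector pairing forces opposite homogeneities, these terms cannot contribute. This single observation replaces your induction entirely; the remaining point --- that the ``trace'' part $\tfrac12\de^a{}_b$ of each $\mathbb{Z}$-slot of $\mathbb{D}$ drops against the trace-freeness of $\Phi(\si)$ --- you already have.
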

\begin{proof}
First, we need to use the properties of 
$\na_b \bigl( \Si(\si) \bigr)$. It follows from the BGG machinery, cf.\ \cite{cd}, that
\begin{align*}
\na_b \bigl( \Si(\si)  \bigr)^{A_1 \ldots A_k}{}_{B_1 \ldots B_k} = &
\mathbb{Z}_{a_1}{}^{c_1}{}^{A_1}{}_{B_1} \ldots \mathbb{Z}_{a_k}{}^{c_k}{}^{A_k}{}_{B_k}
\bigl( \Ph(\si) \bigr)_{bc_1 \ldots c_k}{}^{a_1  \ldots a_k} \\
& +\sum_{i=1}^k \underbrace{\mathbb{X} \dots \mathbb{X}}_i
\underbrace{\mathbb{Z} \ldots \mathbb{Z}}_{k-i} \, \mu_i,
\end{align*}
where we suppressed all abstract indices in the second summand. Here $\mu_i$ is a section of 
suitable bundle which we do need to know explicitly.

Secondly, it follows from \eqref{doubleD} for the composition
\begin{align*}
\mathbb{D}^{B_1}{}_{A_1} & \ldots \mathbb{D}^{B_k}{}_{A_k} = \\ 
&= \mathbb{Z}_{c_1}{}^{a_1}{}^{B_1}{}_{A_1} \ldots \mathbb{Z}_{c_k}{}^{a_k}{}^{B_k}{}_{A_k}
\bigl[ (\tfrac14)^k \tf \bigl( \bfga^{c_1} \bfga_{a_1} \ldots \bfga^{c_k} \bfga_{a_k} \bigl) 
+ \ \text{trace terms} \bigr]  \\
&\ \ \ + \ \text{remaining terms containing $\mathbb{X}$, $\mathbb{Y}$, $\mathbb{Z}$ and $\mathbb{W}$}.
\end{align*}

Now combining $\mathbb{Z} \ldots \mathbb{Z}$-terms of both displays yields the right 
hand side as stated in lemma.
Since $\bigl( \Ph(\si) \bigr)_{bc_1 \ldots c_k}{}^{a_1  \ldots a_k}$ is totally trace free, 
the "trace terms" in the previous display cannot contribute. 
In addition we exploit the notion of homogeneity given by $h(\mathbb{Y}):=1$, $h(\mathbb{Z}):=0$
and $h(\mathbb{X}):=-1$,  extended to concatenations of $\mathbb{Y}$,  $\mathbb{Z}$ and $\mathbb{X}$
in an obvious way. Now since 
$\mathbb{X} \dots \mathbb{X} \mathbb{Z} \ldots \mathbb{Z}$-terms of $\na_b \bigl( \Si(\si)  \bigr)$ 
with at least one $\mathbb{X}$ have homogeneity $\leq -1$ and all ``remaining terms'' in the previous display
have homogeneity $\leq 0$, lemma follows.
\end{proof}
Now we substitute $w = -\frac34$ and use \eqref{commutdsprolong} to conclude:
\begin{theorem} \label{DirOsi}
Assume $(M,[\na])$ is a projectively flat structure, and $\si^{a_1 \ldots a_k} \in \cE^{(a_1 \ldots a_k)}$ for
$k \in{\mN}$. Then the operator $\Dir_s \cO^{\si} - \ol{\cO}^\si \Dir_s: \cS(-\tfrac34) \to \cS(-\tfrac94)$
satisfies
$$
\Dir_s \cO^{\si} - \ol{\cO}^\si \Dir_s = (\tfrac14)^k\bigl( \Ph(\si) \bigr)^{bc_1 \ldots c_k a_1 \ldots a_k}
\bfga_b \bfga_{c_1} \ldots  \bfga_{c_k} \bfga_{a_1} \ldots  \bfga_{a_k}.
$$
In particular, $\cO^{\si}$ is a symmetry of $\Dir_s$ if and only if $\Ph(\si)=0$.
\end{theorem}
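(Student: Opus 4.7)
My plan is to reduce the statement to the content of Lemma \ref{naSi} via the commutation relation \eqref{commutdsprolong} and a Leibniz-rule argument, treating $\cO^\si = \bigl(\Si(\si)\bigr)^{A_1\ldots A_k}{}_{B_1\ldots B_k}\,\mathbb{D}^{B_1}{}_{A_1}\cdots\mathbb{D}^{B_k}{}_{A_k}$ as a tractor section $\Si(\si)$ contracted against the operator string $\mathbb{D}^k$. The crucial observation is that $\bfga^a$ acts on the symplectic spinor factor, while $\Si(\si)$ acts on the tractor factor, so the two commute freely. The main obstacle I expect is the \emph{converse} implication in the ``in particular'' clause, which requires a faithfulness argument for the symplectic Clifford action.

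\textbf{Step 1 (Leibniz and commutation).} I would apply $\Dir_s = \bfga^b\na_b$ to $\cO^\si\ph$ for $\ph\in\cS(-\tfrac34)$ and use the Leibniz rule together with $[\bfga^b,\Si(\si)]=0$ to obtain
\begin{align*}
\Dir_s\,\cO^\si\ph \;=\; \bigl(\Si(\si)\bigr)^{\!A_1\ldots A_k}{}_{\!B_1\ldots B_k}\Dir_s\bigl(\mathbb{D}^{B_1}{}_{A_1}\cdots\mathbb{D}^{B_k}{}_{A_k}\ph\bigr)
\;+\;\bfga^b\bigl[\na_b\Si(\si)\bigr]^{\!A_1\ldots A_k}{}_{\!B_1\ldots B_k}\mathbb{D}^{B_1}{}_{A_1}\cdots\mathbb{D}^{B_k}{}_{A_k}\ph .
\end{align*}
Iterating \eqref{commutdsprolong}, $\Dir_s$ passes through all $k$ copies of $\mathbb{D}$, and the first summand becomes $\ol{\cO}^\si\Dir_s\ph$ after recognising that the same splitting $\Si(\si)$ defines both $\cO^\si$ and $\ol{\cO}^\si$ by \eqref{Osi}. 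Hence
\begin{align*}
\Dir_s\cO^\si - \ol{\cO}^\si\Dir_s \;=\; \bfga^b\bigl[\na_b\Si(\si)\bigr]^{\!A_1\ldots A_k}{}_{\!B_1\ldots B_k}\mathbb{D}^{B_1}{}_{A_1}\cdots\mathbb{D}^{B_k}{}_{A_k}.
\end{align*}

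\textbf{Step 2 (Apply Lemma \ref{naSi}).} Substituting the conclusion of Lemma \ref{naSi} and contracting with $\bfga^b$ produces precisely
\begin{align*}
(\tfrac14)^k\bigl(\Ph(\si)\bigr)^{bc_1\ldots c_k a_1\ldots a_k}\bfga_b\bfga_{c_1}\cdots\bfga_{c_k}\bfga_{a_1}\cdots\bfga_{a_k},
\end{align*}
which is the displayed formula of the theorem. No projectively-flat assumption is needed for this identity itself; flatness only enters through the fact that on a projectively flat background $\Si$ is the BGG splitting operator for which Lemma \ref{naSi} holds verbatim (and the prolongation statement of Proposition \ref{prol} is available).

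\textbf{Step 3 (Sufficiency and necessity of $\Ph(\si)=0$).} Sufficiency is immediate: if $\Ph(\si)=0$ then the above display vanishes, so $\Dir_s\cO^\si=\ol{\cO}^\si\Dir_s$ and $\cO^\si$ is a symmetry. For necessity, suppose the zero-order operator on the right vanishes as an endomorphism of $\cS(-\tfrac34)$. Since the symplectic Clifford algebra acts faithfully on the Segal--Shale--Weil module, any element that acts as zero must be zero in $Cl_s(\mR^2,\om)$. The symbol $\Ph(\si)$ is totally trace-free in its $(k{+}1,k)$ tensor indices by construction \eqref{bgg}, hence the expression $\Ph(\si)^{bc_1\ldots c_k a_1\ldots a_k}\bfga_b\bfga_{c_1}\cdots\bfga_{c_k}\bfga_{a_1}\cdots\bfga_{a_k}$ is its image under the totally symmetric (trace-free) symbol map into the Weyl algebra, which is injective. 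This forces $\Ph(\si)=0$.

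The main technical hurdle I anticipate is Step 3's faithfulness argument: the ordering of $\bfga$'s in the formula of Step 2 is not \emph{a priori} symmetric, so one must first use the commutation relations \eqref{gammaidentity1} to recognise that on a totally trace-free tensor the products $\bfga_b\bfga_{c_1}\cdots\bfga_{a_k}$ project to the totally symmetrised product modulo lower-rank terms that contract with $\bfom$ and therefore annihilate the trace-free symbol. Once this is observed, the standard symbol/PBW argument for the Weyl algebra closes the converse direction.
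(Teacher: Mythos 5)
Your Steps 1 and 2 coincide with the paper's intended argument: apply the Leibniz rule for $\Dir_s=\bfga^b\nabla_b$ to the contraction $\bigl(\Si(\si)\bigr)^{A_1\ldots A_k}{}_{B_1\ldots B_k}\mathbb{D}^{B_1}{}_{A_1}\cdots\mathbb{D}^{B_k}{}_{A_k}\ph$, commute $\Dir_s$ through each $\mathbb{D}$ via \eqref{commutdsprolong}, and then invoke Lemma \ref{naSi} at $w=-\tfrac34$. Your observation in Step 3 --- that contracting $\bfga_b\bfga_{c_1}\cdots\bfga_{a_k}$ with a totally symmetric trace-free tensor kills all commutator/trace corrections, so the resulting endomorphism is the image of $\Ph(\si)$ under the injective symmetrisation map into the Weyl algebra --- is correct and is a useful elaboration of what the paper leaves to the reader.

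There is, however, a logical jump in your necessity argument. You begin it with ``suppose the zero-order operator on the right vanishes'', but what is actually given is that $\cO^\si$ is a symmetry in the sense of \eqref{symcond}, i.e.\ $\Dir_s\cO^\si=\cO'\Dir_s$ for \emph{some} $\cO'$, not necessarily $\cO'=\ol{\cO}^\si$. The displayed identity then tells you only that $Z:=(\tfrac14)^k\bigl(\Ph(\si)\bigr)^{bc_1\ldots a_k}\bfga_b\cdots\bfga_{a_k}=(\cO'-\ol{\cO}^\si)\Dir_s$ lies in the left ideal generated by $\Dir_s$, not that $Z=0$. You still need to show that a zero-order operator which factors as $T\Dir_s$ must be zero. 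On a projectively flat structure this follows, for example, because such a $Z$ annihilates $\ker\Dir_s$, and $\ker\Dir_s$ contains all $\nabla$-parallel sections of $\cS(-\tfrac34)$, which span each fibre; hence the multiplication operator $Z$ vanishes pointwise. Only then does your Clifford-faithfulness argument apply and yield $\Ph(\si)=0$. (The paper passes over this same point silently, but since you explicitly flag the converse as the main hurdle, note that the missing step is upstream of the faithfulness argument, not within it.)
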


Notice that once we define $\cO^{\si}$ and $\ol{\cO}^\si$ as the left multiplication by $\si$
and $\Ph(\si)$ as the differential $d\si$, the theorem holds for $k=0$ as well.

\section{Symbols and construction of higher symmetries of $\Dir_s$}\label{sec:4}

Let us consider a $k$-th order differential operator $\cO$ on weighted symplectic 
spinors. Its principal symbol
is $\si^{a_1 \ldots a_k} \in \cE^{(a_1 \ldots a_k)} \otimes \End(\cS_\pm)$, and 
\begin{equation} \label{EndS}
\End(\cS_\pm) \cong \bigoplus_{i \in \mathbb{N}_0} \cE^{(b_1 \ldots b_{2i})}(-3i)
\end{equation}
since the typical fiber of $\End(\cS)$ is isomorphic to $Cl_s(\mathbb{R}^2,\bfom)$ 
up to a weight, cf.\
\eqref{Cl}. The decomposition \eqref{EndS} implies that $\si^{a_1 \ldots a_k}$  
can be written as
$$
\si^{a_1 \ldots a_k} = \sum_{i \in \mathbb{N}_0} 
\si_i^{a_1 \ldots a_kb_1 \ldots b_{2i}}, 
\quad \si_i^{a_1 \ldots a_kb_1 \ldots b_{2i}} \in \cE^{(a_1 \ldots a_k)(b_1 \ldots b_{2i})}(-3i),
$$
where all $\si_i^{a_1 \ldots a_kb_1 \ldots b_{2i}}$ up to finitely many
vanish. Thus we have for any $w\in\mR$
\begin{align} \label{cO}
\begin{aligned}
\cO: &\, \cS_\pm(w) \to \cS_\pm(w), \\
&\, \ph \mapsto \sum_{i \in \mathbb{N}_0} 
\si_i^{a_1 \ldots a_kb_1 \ldots b_{2i}} \bfga_{b_1} \ldots \bfga_{b_{2i}} \na_{a_1} \ldots \na_{a_k} \ph +\mathrm{lot}
\end{aligned}
\end{align}
where we have explicitly mentioned the leading term of $\cO$.
The subspaces $\cE^{(a_1 \ldots a_k)(b_1 \ldots b_{2i})}$ are in general not irreducible, namely,
we have a distinguished Cartan component 
$\cE^{(a_1 \ldots a_kb_1 \ldots b_{2i})} \subsetneq \cE^{(a_1 \ldots a_k)(b_1 \ldots b_{2i})}$. 
Moreover,  one easily observes that once $\si_i^{a_1 \ldots a_kb_1 \ldots b_{2i}}$ lives in the 
complement to the Cartan component it gives rise to the operator
$$
\si_i^{a_1 \ldots a_kb_1 \ldots b_{2i}} \bfga_{b_1} \ldots \bfga_{b_{2i}} \na_{a_1} \ldots \na_{a_k} 
$$
which factors through the symplectic Dirac operator $\Dir_s = \bfom^{rs} \bfga_r \na_s$ for 
the weight $w = -\tfrac34$.  Henceforth we specialize to this weight and conclude

\begin{lemma} \label{ps}
The operator $\cO$ from \eqref{cO} has, modulo $\Dir_s$,  the principal symbol
$$
\si^{a_1 \ldots a_k} = \sum_{i \in \mathbb{N}_0} 
\si_i^{a_1 \ldots a_kb_1 \ldots b_{2i}}, 
\quad \si_i^{a_1 \ldots a_kb_1 \ldots b_{2i}} \in \cE^{(a_1 \ldots a_kb_1 \ldots b_{2i})}(-3i).
$$
\end{lemma}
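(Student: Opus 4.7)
The strategy is to decompose the bundle $\cE^{(a_1 \ldots a_k)(b_1 \ldots b_{2i})}(-3i)$ into its Cartan component plus a complement, then show that any principal symbol supported in the complement is equivalent, modulo operators factoring through $\Dir_s$ on the right, to zero; what remains must lie in the Cartan components, as claimed.

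First, I would record the $\mathfrak{sl}(2,\mathbb{R})$-equivariant direct sum decomposition
\begin{align*}
\cE^{(a_1 \ldots a_k)(b_1 \ldots b_{2i})}(-3i) \,=\, \cE^{(a_1 \ldots a_k b_1 \ldots b_{2i})}(-3i) \,\oplus\, \mathrm{(complement)},
\end{align*}
where every element of the complement is a finite sum of terms of the form $\bfom_{a_j b_l}\, \tau^{a_1 \ldots \widehat{a_j} \ldots a_k\, b_1 \ldots \widehat{b_l} \ldots b_{2i}}$. In real dimension $2$ this is just the standard Clebsch--Gordan reduction for symmetric $\mathfrak{sl}(2,\mathbb{R})$-tensors, every non-Cartan trace being a multiple of $\bfom$.

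Second, I would argue that every such complementary contribution to the symbol \eqref{cO} yields an operator factoring through $\Dir_s$ on the right, up to lower order terms. Starting from a representative
\begin{align*}
\bfom^{a_j b_l}\, \tau^{\ldots}\, \bfga_{b_1} \ldots \bfga_{b_{2i}}\, \na_{a_1} \ldots \na_{a_k},
\end{align*}
the Clifford identity \eqref{gammaidentity1} permits moving $\bfga_{b_l}$ to the rightmost gamma-matrix slot, while the derivatives $\na_{a_j}$ can be commuted to the rightmost position, both operations introducing only lower-symbol remainders (curvature in the derivatives, scalar $\bfom$-contractions in the gammas). The rearranged leading term contains the factor $\bfom^{a_j b_l}\bfga_{b_l}\na_{a_j}=\Dir_s$ standing immediately to the right of the remaining $\bfga$'s and $\na$'s, so the contribution equals a $k{-}1$-st order differential operator composed with $\Dir_s$, which on $\cS_\pm(-\tfrac34)$ has image in $\cS_\mp(-\tfrac94)$ as required for it to count as a right factor of $\Dir_s$. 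All commutation remainders have strictly lower differential order and hence are absorbed in the "lot" part of \eqref{cO}.

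Iterating this procedure for each trace index-pair $(a_j,b_l)$ present in the complement adjusts $\cO$ by an operator of the form $T\Dir_s$ whose principal symbol cancels precisely the complementary components of the symbol $\si^{a_1\ldots a_k}$, leaving only the Cartan-component pieces $\si_i^{a_1\ldots a_kb_1\ldots b_{2i}}\in\cE^{(a_1 \ldots a_kb_1 \ldots b_{2i})}(-3i)$. The only delicate point is the second step: one must verify that the gamma anticommutator and derivative commutator corrections are genuinely of lower order and can be fed inductively into the "lot" without spoiling the bookkeeping; this is immediate in dimension $2$ since both $\bfom_{ab}$ and the curvature of $\na$ drop one order relative to the original symbol.
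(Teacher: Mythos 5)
Your approach matches the paper's, which states this claim in a single sentence without argument, and the overall strategy (identify the complement of the Cartan component with $\bfom$-traces between an $a$-index and a $b$-index, then factor out $\Dir_s=\bfom^{rs}\bfga_r\na_s$ by commuting the contracted $\bfga$ and $\na$ to the right) is sound. However, there is one genuine imprecision in your second step. You claim that ``all commutation remainders have strictly lower differential order and hence are absorbed in the `lot' part,'' but this is not true of the gamma anticommutator terms: moving $\bfga_{b_l}$ past $\bfga_{b_m}$ produces $2\bfom_{b_l b_m}$, and contracting this against the factor $\bfom^{a_j b_l}$ gives $\pm 2\,\delta^{a_j}{}_{b_m}$, yielding a term with two fewer $\bfga$'s but the \emph{same} number of $\na$'s. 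Such a term is still of differential order $k$ and does \emph{not} belong to ``lot''; instead it contributes to the component $\si_{i-1}$ in the decomposition \eqref{cO}. The correct bookkeeping is a finite descent on the $\bfga$-degree $i$: take the largest $i$ for which $\si_i$ has a non-Cartan part, subtract the corresponding $T\Dir_s$, absorb the gamma-commutator corrections into $\si_{i-1}$, and iterate; the process terminates at $i=0$, where $\cE^{(a_1\ldots a_k)}$ is already its own Cartan component. Only the $\na$-commutator (curvature) corrections genuinely drop the differential order and land in ``lot.'' Your final paragraph gestures at this by saying the corrections ``can be fed inductively,'' but the claim that this is ``immediate'' because ``$\bfom_{ab}$ drops one order relative to the original symbol'' conflates the $\bfga$-degree with the differential order and should be restated explicitly as the descent on $i$.
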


We shall turn our attention to properties of the principal symbol for $\cO$ 
in order to become a symmetry 
of $\Dir_s$, i.e.,
$\Dir_s \cO = \cO' \Dir_s$ for some operator $\cO'$. 
The following lemma concerns algebraic properties of such symbols.

\begin{proposition} \label{symalg}
Assume the principal symbol of $\cO$ is non-trivial in the sense that 
$\si_i^{a_1 \ldots a_kb_1 \ldots b_{2i}} \not= 0$ for some $i \geq 1$. 
Then, modulo $\Dir_s$, the composition $\Dir_s \cO$ is a differential 
operator of order $k+1$ and its principal  symbol is non-vanishing.
\end{proposition}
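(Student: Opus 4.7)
The plan is to argue by contradiction: assume the order-$(k+1)$ principal symbol of $\Dir_s\cO$ coincides with that of $\cO'\Dir_s$ for some $\cO'\in\Diff^k(\cS(-\tfrac{9}{4}))$ with $\End(\cS)$-valued symbol $\tau$, and derive that $\si_{i_0}=0$ for all $i_0\ge 1$, contradicting the hypothesis. The heart of the argument is the Clifford commutation identity \eqref{gammaidentity3}, combined with Schur's lemma applied to the Cartan-component decomposition of the spinor symbol.

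Writing the symbol of $\cO$ in the Cartan-component form guaranteed by Lemma \ref{ps}, a direct application of \eqref{gammaidentity3} with $j=2i$, the full symmetry of $\si_i$ in the $b$-indices, and the contraction $\bfom^{rs}\bfom_{sb_1}=-\delta^r{}_{b_1}$, yields
\begin{align*}
\bfom^{rs}\bfga_s\,\si \;=\; \si\,\bfom^{rs}\bfga_s \;-\; \sum_{i\ge 1}4i\,\si_i^{a_1\ldots a_k r b_2\ldots b_{2i}}\,\bfga_{b_2}\ldots\bfga_{b_{2i}}.
\end{align*}
Under the assumed identity $\bfom^{rs}\bfga_s\,\si=\tau\,\bfom^{rs}\bfga_s$, setting $\theta=\tau-\si$ converts this to
\begin{align*}
\theta\,\bfom^{rs}\bfga_s \;=\; -\sum_{i\ge 1}4i\,\si_i^{a_1\ldots a_k r b_2\ldots b_{2i}}\,\bfga_{b_2}\ldots\bfga_{b_{2i}}.
\end{align*}

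Let $i_0\ge 1$ be the largest index with $\si_{i_0}\ne 0$. Matching top Clifford degrees of both sides forces $2j_{\max}+1=2i_0-1$, where $j_{\max}$ is the largest $j$ with $\theta_j\ne 0$; hence $\theta_i=0$ for $i\ge i_0$ and the entire Clifford-degree-$(2i_0-1)$ part of the left-hand side comes solely from the top Cartan piece of $\theta_{i_0-1}\bfga^{2i_0-2}\bfom^{rs}\bfga_s$. At this top Clifford degree the equation specialises to
\begin{align*}
\theta_{i_0-1}^{a_1\ldots a_k b_1\ldots b_{2i_0-2}}\,\bfom^{rs}\,\bfga_{(s}\bfga_{b_1}\ldots\bfga_{b_{2i_0-2})}
\;=\; -4i_0\,\si_{i_0}^{a_1\ldots a_k r b_2\ldots b_{2i_0}}\,\bfga_{b_2}\ldots\bfga_{b_{2i_0}},
\end{align*}
both sides being sections of $S^{k+1}TM\otimes S^{2i_0-1}TM$ (with appropriate projective weight) symmetrised in the indices $(r,a_1,\ldots,a_k)$.

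The representation-theoretic punchline is now clear: the left-hand side is the image of $\theta_{i_0-1}\in S^{k+2i_0-2}TM$ under an $SL(2,\mR)$-equivariant map, so by Schur's lemma it lies in the $S^{k+2i_0-2}$ isotypic component of the Clebsch--Gordan decomposition $S^{k+1}\otimes S^{2i_0-1}=S^{k+2i_0}\oplus S^{k+2i_0-2}\oplus\ldots$, while the right-hand side is the Cartan-product image of $\si_{i_0}\in S^{k+2i_0}TM$ and lives in the distinct $S^{k+2i_0}$ component. Since these are inequivalent irreducible $SL(2,\mR)$-representations the equation forces both sides to vanish, and the injectivity of the Cartan-product embedding yields $\si_{i_0}=0$, the sought contradiction. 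The slightly delicate point is the isolation of the top Clifford-degree component and the verification of $SL(2,\mR)$-equivariance of the resulting tensor equation; once these are in place, Schur's lemma in real dimension two closes the argument at once.
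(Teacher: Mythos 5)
Your proof is correct and follows essentially the same route as the paper: commute $\Dir_s$ past the Clifford symbol via \eqref{gammaidentity3}, observe that (modulo $\Dir_s$) the $\si_i$-contribution to the order-$(k{+}1)$ symbol is totally symmetric (Cartan component), while any contribution from an operator of the form $\cO'\Dir_s$ carries an $\bfom$-contraction between a derivative index and a Clifford index and therefore vanishes under total symmetrisation, so they can never cancel. Two small points are worth tightening: (a) $\theta_{i_0-1}$ is \emph{not} a section of $S^{k+2i_0-2}TM$ but of the reducible $S^kTM\otimes S^{2i_0-2}TM$, though your conclusion survives because the $\bfom$-factor still lands the image in the complement of the top Cartan summand $S^{k+2i_0}$ of $S^{k+1}\otimes S^{2i_0-1}$; (b) forcing $2j_{\max}+1=2i_0-1$ by ``matching top Clifford degrees'' silently uses the injectivity of $\theta\mapsto\mathrm{sym}(\theta\otimes\bfom)$ at top Clifford degree, which is true but unstated -- the paper avoids this entirely by comparing Cartan components at each fixed Clifford degree $2i-1$ rather than isolating the top $i_0$.
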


\begin{proof}
We shall consider the composition $\Dir_s \cO$ which is an operator of 
order at most $k+1$. In fact, 
we need just the leading term in this composition. Applying $\bfom^{rs} \bfga_r \na_s$ to $\cO$
as given in \eqref{cO} yields, modulo $\Dir_s$, 
\begin{align*}
\Dir_s \cO &= 
4\sum_{i \in \mathbb{N}_0} i \si_i^{a_1 \ldots a_kb_1 \ldots b_{2i}} 
\bfom^{rs} \bfom_{rb_1} \bfga_{b_2} \ldots \bfga_{b_{2i}} \na_s \na_{a_1} \ldots \na_{a_k} + \mathrm{lot} \\
&= 4\sum_{i \in \mathbb{N}_0} i \si_i^{a_1 \ldots a_kb_1 \ldots b_{2i}} 
\bfga_{b_2} \ldots \bfga_{b_{2i}} \na_{b_1} \na_{a_1} \ldots \na_{a_k} + \mathrm{lot}
\end{align*}
using Lemma \ref{ps} and \eqref{gammaidentity3}. 

Next, consider the principal symbol $\bar{\si}$ of the operator $\ol{\cO} \Dir_s$ for some $\ol{\cO}$. 
Specifically, we shall need the component of  $\bar{\si}$ with $2i-1$ of $\bfga$'s which we denote
by $\bar{\si}_i$. This yields the component in the leading term of $\ol{\cO} \Dir_s$ of the form
\begin{align*}
&\bar{\si}_i^{a_1 \ldots a_kb_1b_2 \ldots b_{2i}} \bfga_{b_2} \ldots \bfga_{b_{2i}} \na_{b_1} \na_{a_1} \ldots \na_{a_k} 
\quad \text{with} \\
& \bar{\si}_i^{a_1 \ldots a_kb_1b_2 \ldots b_{2i}} \in \cE^{(a_1 \ldots a_kb_1) (b_2 \ldots b_{2i})}(-\tfrac{3(2i-1)}{2}).
\end{align*}
We highlight the fact that $\si_i^{a_1 \ldots a_kb_1 \ldots b_{2i}}$ lives in $\cE^{(a_1 \ldots a_kb_1b_2 \ldots b_{2i})}$, whereas
$\bar{\si}_i^{a_1 \ldots a_kb_1b_2 \ldots b_{2i}}$ lives in its complement. Consequently, the differential
operator $\Dir_s \cO - \ol{\cO} \Dir_s$ cannot have order $\leq k$, hence the proof follows.
\end{proof}

\begin{corollary} \label{pssymalg}
If $\cO$ is a symmetry of $\Dir_s$ then, modulo $\Dir_s$, the principal symbol of $\cO$ is
$$
\si^{a_1 \ldots a_k} = \si_0^{a_1 \ldots a_k} \in \cE^{(a_1 \ldots a_k)}.
$$
That is, $\si_i^{a_1 \ldots a_kb_1 \ldots b_{2i}}=0$ for all $i \geq 1$.
\end{corollary}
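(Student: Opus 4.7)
The plan is to deduce the corollary as an essentially immediate consequence of Proposition \ref{symalg} together with Lemma \ref{ps} applied to $\cO'$, by means of a proof by contradiction.

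Suppose $\cO$ is a symmetry of $\Dir_s$, so that $\Dir_s \cO = \cO' \Dir_s$ for some $\cO' \in \Diff(\cS)$, and assume for contradiction that the principal symbol of $\cO$ (modulo $\Dir_s$) has some component $\si_i \neq 0$ with $i \geq 1$; by Lemma \ref{ps} applied to $\cO$ we may take $\si_i$ to lie in the Cartan component $\cE^{(a_1 \ldots a_k b_1 \ldots b_{2i})}(-3i)$. First I would record that the computation already carried out in the proof of Proposition \ref{symalg} shows that the leading symbol contribution of $\Dir_s \cO$ involving exactly $2i-1$ gamma-matrices equals (up to a nonzero constant) the image of $\si_i$ in the Cartan-type slot $\cE^{(a_1 \ldots a_k b_1 b_2 \ldots b_{2i})}$, using only the commutation \eqref{gammaidentity3} and the contraction $\bfom^{rs}\bfom_{r b_1} = \delta^s{}_{b_1}$.

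Next I would analyse $\cO' \Dir_s$ symmetrically. By Lemma \ref{ps} applied to $\cO'$, its own principal symbol splits (modulo $\Dir_s$) into Cartan components $\bar{\si}_j \in \cE^{(a_1 \ldots a_{k'} b_1 \ldots b_{2j})}(-3j)$. A direct reading of the leading term of $\cO' \Dir_s$ shows that the $\na$ coming from $\Dir_s$ appears on the far right of the $\na$-string, so its free index pairs with the \emph{first} tensorial index of $\bar{\si}_j$, producing a symbol of the form $\bar{\si}_j^{a_1\ldots a_k b_1 b_2\ldots b_{2j}}$ sitting in $\cE^{(a_1\ldots a_k b_1)(b_2\ldots b_{2j})}$. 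By $\GL$-irreducibility, this subspace meets the full Cartan component $\cE^{(a_1 \ldots a_k b_1 b_2 \ldots b_{2i})}$ only in the direction obtained by total symmetrisation; but the contribution from $\cO' \Dir_s$ in the $2i-1$-gamma slot lies in the complement to this Cartan component, exactly as observed at the end of the proof of Proposition \ref{symalg}.

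Comparing the two expressions slot by slot in the $\End(\cS)$-decomposition \eqref{EndS}, the identity $\Dir_s \cO = \cO' \Dir_s$ forces the $\si_i$-contribution and the $\bar{\si}_i$-contribution, living in complementary $\GL$-irreducible subspaces, to vanish separately. Hence $\si_i = 0$ for every $i \geq 1$, and only the scalar component $\si_0^{a_1 \ldots a_k} \in \cE^{(a_1 \ldots a_k)}$ survives, which is the assertion of the corollary. I do not anticipate any real obstacle here: all the nontrivial content—the Clifford commutation, the $\bfom$-contraction, and the Cartan-versus-complement separation—has already been done inside Proposition \ref{symalg}, so the corollary reduces to a one-line invocation of that statement.
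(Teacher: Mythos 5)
Your proposal is correct and takes essentially the same route as the paper: the corollary is the immediate contrapositive of Proposition \ref{symalg}, since a nonzero $\si_i$ with $i\geq 1$ would force $\Dir_s\cO-\cO'\Dir_s$ to have order $k+1$ for every $\cO'$, contradicting the symmetry identity $\Dir_s\cO=\cO'\Dir_s$. The lengthy re-derivation of the Cartan-versus-complement mechanism is redundant (and slightly looser than the paper in its invocation of Lemma \ref{ps} for $\cO'$, which the paper does not need), but you correctly recognize at the end that the corollary is a one-line application of the proposition, which is exactly how the paper treats it.
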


Given $\si^{a_1 \ldots a_k}$, we can use the operator $\cO^\si$ from \eqref{cO}. 
Assuming $\cO$ is a symmetry of $\Dir_s$, 
both operators $\cO$ and $\cO^\si$ have the same principal symbol, cf.\ \eqref{cQsi}. 
Therefore, we have $\cO = \cO^\si + \cO'$ with $\cO'$ of order $k-1$.  Denote the symbol of $\cO'$ by 
$(\si')^{a_1 \ldots a_{k-1}} \in \cE^{(a_1 \ldots a_{k-1})} \otimes \End(\cS)$. This generally decomposes according
to the number of $\bfga$'s. But since $\cO$ is a symmetry of $\Dir_s$ and $\Dir_s \cO^\si$ has zero 
order by Theorem \ref{DirOsi}, it follows from Proposition \ref{symalg} applied to $\cO'$ that in fact
$(\si')^{a_1 \ldots a_{k-1}} \in \cE^{(a_1 \ldots a_{k-1})}$. Thus we have
$$
\cO = \cO^\si + \cO^{\si'} + \cO''
$$
with $\cO''$ of order $k-2$, and by inductive procedure (with respect to the stritly decreasing order on the algebra of
differential operators) we conclude that after
finitely many steps every symmetry $\cO$ of $\Dir_s$ reduces, modulo $\Dir_s$, into the form
\begin{align} \label{Odec}
\begin{aligned}
& \cO = \cO^{\si_{[k]}} + \cO^{\si_{[k-1]}} + \ldots + \cO^{\si_{[0]}}: 
\cS(-\tfrac34) \to \cS(-\tfrac34), \quad \text{where} \\
& \si_{[i]}^{a_1 \ldots a_i} \in \cE^{(a_1 \ldots a_i)}, \ i=0,\ldots,k.
\end{aligned}
\end{align}
Here we have introduced a new notation: comparing the last two 
displays, we have $\si_{[k]} = \si$, $\si_{[k-1]} = \si'$, etc.

The final statement on the characterization of symbols of higher symmetries 
of $\Dir_s$ is a direct
consequence of Theorem \ref{DirOsi} and Corollary \eqref{Odec}:

\begin{theorem}\label{mainsymbols}
The operator $\cO$ is a symmetry of $\Dir_s$ if and only if
$\Ph(\si_{[i]})=0$ for all $i=0, \ldots, k$, cf.\ \eqref{bgg}.
\end{theorem}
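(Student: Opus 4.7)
The equivalence splits into two directions. \emph{Sufficiency} is immediate: if $\Ph(\si_{[i]})=0$ for all $i$, Theorem \ref{DirOsi} gives $\Dir_s\cO^{\si_{[i]}}=\ol{\cO}^{\si_{[i]}}\Dir_s$ for each summand, and summing over $i$ yields $\Dir_s\cO=\cO'\Dir_s$ with $\cO':=\sum_i\ol{\cO}^{\si_{[i]}}$, so $\cO$ is a symmetry. For \emph{necessity}, suppose $\Dir_s\cO=\cO'\Dir_s$ for some $\cO'\in\Diff(\cS)$. I would set
\begin{equation*}
E \,:=\, \Dir_s\cO-\sum_{i=0}^k\ol{\cO}^{\si_{[i]}}\Dir_s
\,=\, \sum_{i=0}^k(\tfrac14)^i\bigl(\Ph(\si_{[i]})\bigr)^{bc_1\ldots c_ia_1\ldots a_i}
\bfga_b\bfga_{c_1}\cdots\bfga_{c_i}\bfga_{a_1}\cdots\bfga_{a_i},
\end{equation*}
the second equality being Theorem \ref{DirOsi} applied termwise. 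Thus $E$ is a zeroth-order operator (multiplication by a section of $\End(\cS)$), while simultaneously $E=T\Dir_s$ with $T:=\cO'-\sum_i\ol{\cO}^{\si_{[i]}}\in\Diff(\cS)$. The remaining task is to show $E=0$ and then to separate the contributions for distinct~$i$.

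To obtain $E=0$ I would pass to principal symbols. The principal symbol of $\Dir_s$ at a nonzero covector $\xi$ is $\bfga^r\xi_r$, a nonzero element of the typical fiber $Cl_s(\mathbb{R}^2,\omega)$ of $\End(\cS)$. Since this symplectic Clifford algebra is the Weyl algebra and hence a domain, right-multiplication by $\bfga^r\xi_r$ on principal symbols is injective. Thus if $T$ had order $m\geq 0$ with nontrivial principal symbol $\tau(\xi)$, the symbol of $T\Dir_s$ would be $\tau(\xi)\bfga^r\xi_r\neq 0$ of polynomial degree $m+1\geq 1$, contradicting the fact that $E$ has order zero. Iterating on the order of $T$ forces $T=0$ and therefore $E=0$.

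Finally, to extract $\Ph(\si_{[i]})=0$ individually from $E=0$, I would show that the distinct summands of $E$ lie in linearly independent subspaces of $\End(\cS)$. Since $\Ph(\si_{[i]})$ is totally symmetric and totally trace-free, rewriting the word $\bfga_b\bfga_{c_1}\cdots\bfga_{a_i}$ as a totally symmetric Clifford monomial plus corrections proportional to $\bfom$'s (via \eqref{gammaidentity1} and \eqref{gammaidentity3}) kills all corrections: each $\bfom$ is skew and contracts trivially against symmetric trace-free indices. The totally symmetric Clifford products $\bfga_{(b_1}\cdots\bfga_{b_{2i+1})}$ of distinct lengths $2i+1$ are linearly independent in $\End(\cS)$, as they realize successive associated-graded components of the PBW filtration on the Weyl algebra, and the induced pairing against trace-free symmetric tensors of the corresponding rank is non-degenerate. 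Hence $E=0$ forces $\Ph(\si_{[i]})=0$ for every $i$, completing the argument. The most delicate point is this final linear-independence/non-degeneracy step; the passage $E=T\Dir_s\Rightarrow T=0$ is a clean symbol calculation, and the rest is bookkeeping on top of Theorem \ref{DirOsi}.
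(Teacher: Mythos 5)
Your proof is correct and takes essentially the same route as the paper. The paper dispatches Theorem \ref{mainsymbols} as a ``direct consequence of Theorem \ref{DirOsi} and Corollary \eqref{Odec}''; what you have done is supply the two steps that make this consequence direct, namely (a) that a zero-order operator of the form $T\Dir_s$ must vanish, which you correctly reduce to the Weyl algebra being a domain (this is the same fact underlying the paper's Proposition \ref{symalg}), and (b) that the Clifford words of distinct odd length $2i+1$ contracted against totally symmetric trace-free tensors are linearly independent, which is exactly what the paper's decomposition \eqref{EndS} of $\End(\cS)$ encodes.
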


This result, together with Proposition \ref{prol} 
and Theorem \ref{DirOsi}, completes
the description of $\cA^k$ as a vector space, 
$$
\cA^k = \bigoplus_{i=0}^k \boxtimes^k \mathfrak{sl}(3,\mathbb{R}).
$$
The  dimension of the right hand side can be easily computed from 
$\dim \boxtimes^k \mathfrak{sl}(3,\mathbb{R}) = (k+1)^3$, cf. \cite{lm}.

\vspace{1ex}

In order to construct symmetry differential operators for $\Dir_s$ explicitly, we start with 
parallel sections $I^{A}{}_{B}, \bar{I}^{A}{}_{B}\in{\mathcal E}^{A}{}_{B}$ of the adjoint tractor
bundle $\lA_{\mathfrak{d}}$ for $\mathfrak{sl}(3,{\mathbb R})$,
cf. \eqref{covconstsect}. 
Let us first observe that the composition 
$I^{B}{}_{A}\bar{I}^{D}{}_{C}{\mathbb D}^{[A}{}_{[B}{\mathbb D}^{C]}{}_{D]}$
decomposes into irreducibles for $\mathfrak{sl}(3,{\mathbb R})$ according to
\begin{align}\label{ItimesI}
[I,\bar{I}]^{A}{}_{B}= &\, -(I^{A}{}_{R}\bar{I}^{R}{}_{B}-\bar{I}^{A}{}_{R}I^{R}{}_{B}),
\nonumber \\
{\big(I\odot\bar{I}\big)_o}^{A}{}_{B}= &\, \big(I^{A}{}_{R}\bar{I}^{R}{}_{B}
+\bar{I}^{A}{}_{R}I^{R}{}_{B}\big)_o,
\nonumber \\
\langle I,\bar{I}\rangle= &\, I^{R}{}_{S}\bar{I}^{S}{}_{R}.
\end{align}
Note the latter pairing is related to the Killing form $\langle \,,\, \rangle_K$ on $\mathfrak{sl}(3,{\mathbb R})$
by $\langle \,,\, \rangle = \frac16 \langle \,,\, \rangle_K$.
We denote the $\mathfrak{sl}(3,{\mathbb R})$-irreducible Cartan component in the tensor product 
$I^{A}{}_{B}\otimes\bar{I}^{C}{}_{D}$ by
$\big(I\boxtimes\bar{I}\big)^{(A}{}_{(B}{}^{C)}{}_{D)}$. We have
\begin{align}
\big(I\odot\bar{I}\big)^{A}{}_{B}=\big(I^{A}{}_{R}\bar{I}^{R}{}_{B}
+\bar{I}^{A}{}_{R}I^{R}{}_{B}\big)=[I,\bar{I}]^A{}_{B}+
2\bar{I}^{A}{}_{R}I^{R}{}_{B}, 
\end{align}
where the leading term of the adjoint tractor expansion equals
\begin{align}
\bar{I}^{A}{}_{R}I^{R}{}_{B}
= & \,\, {\mathbb Y}_{r}{}^A{}_B\big(v^s(\nabla_s\bar{v}^r)_o 
+\tfrac{1}{6}v^r(\nabla_t\bar{v}^t)-\tfrac{1}{3}v^r(\nabla_t\bar{v}^t)
\big)
\nonumber \\
& + \mathrm{"lower\,\, adjoint\,\, tractor \,\, slots"}
\end{align}
and implies equality in the leading adjoint slot expansion 
\begin{align}
{\big(I\odot\bar{I}\big)_o}^{A}{}_{B} = & \,\,
{\mathbb Y}_{r}{}^A{}_B\big(v^s(\nabla_s\bar{v}^r)
+\bar{v}^s(\nabla_s{v}^r)
-\tfrac{2}{3}v^r(\nabla_t\bar{v}^t)-\tfrac{2}{3}\bar{v}^r(\nabla_t{v}^t)
\big)
\nonumber \\
& + \mathrm{"lower\,\, adjoint\,\, tractor \,\, slots"}.
\end{align}

\begin{lemma}\label{1stordersymm}
The first order differential operator ${\mathcal{S}}^v=I^{A}{}_{B}{\mathbb D}^{B}{}_{A}$,
where $v$ is a solution of \eqref{bgginf} and so is the projective component of the 
parallel tractor $I^{A}{}_{B}$, equals to 
\begin{align}
{\mathcal{S}}^v:=I^{A}{}_{B}{\mathbb D}^{B}{}_{A} = & \, 
v^r\nabla_r+\tfrac{1}{4}(\nabla_rv^s)\bfga^r\bfga_s+\tfrac{1}{3}(\nabla_tv^t).
\end{align}
It is a first order symmetry operator of $\Dir_s$ with symbol $v^r\nabla_r$.
The equivalence relation \eqref{symcond} is for ${\mathcal{S}}^v$ completed by the 
operator
\begin{align}
\bar{\mathcal{S}}^v:=I^{A}{}_{B}\bar{\mathbb D}^{B}{}_{A} = & \, 
v^r\nabla_r+\tfrac{1}{4}(\nabla_rv^s)\bfga^r\bfga_s+\tfrac{5}{6}(\nabla_tv^t).
\end{align}
The Lie algebra structure on first order symmetry operators
given by parallel tractors $I{}^{A}{}_{B}, \tilde{I}{}^{A}{}_{B}\in\cE^A{}_B$
is
$[I,\tilde{I}]^{A}{}_{B}[{\mathbb D},{\mathbb D}]^{B}{}_{A}$ with  the leading term
$[v,\tilde{v}]^a\nabla_a$.  Here $[v,\tilde{v}]^a$ is the Lie bracket of vector
fields $v$ and $\tilde{v}$.
\end{lemma}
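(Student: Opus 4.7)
The plan is to verify the four claims of the lemma in turn: (a) the explicit formula for $\mathcal{S}^v$; (b) the companion formula for $\bar{\mathcal{S}}^v$; (c) the symmetry relation $\Dir_s \mathcal{S}^v = \bar{\mathcal{S}}^v \Dir_s$; and (d) the Lie algebra structure on first-order symmetries.

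For (a), the first step is to note that since the top slot $v$ solves the projective Killing equation \eqref{bgginf}, Proposition \ref{prol} yields a parallel adjoint tractor $I^A{}_B = (\Sigma v)^A{}_B$ given explicitly by \eqref{splittingsl3adjoint}. Reading off slots one finds $\mu^a{}_b = \nabla_b v^a - \tfrac{1}{3}\delta^a_b\nabla_c v^c$ and $\varphi = -\tfrac{1}{3}\nabla_c v^c$, consistent with the trace-free condition $\mu^a{}_a+\varphi=0$. I substitute this decomposition together with formula \eqref{doper34} for $\mathbb{D}^B{}_A$ at weight $-\tfrac{3}{4}$ into $\mathcal{S}^v = I^A{}_B \mathbb{D}^B{}_A$ and contract the injectors using the pairings \eqref{injoperscalprod}. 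Only $\mathbb{Y}\!\cdot\!\mathbb{X}$, $\mathbb{Z}\!\cdot\!\mathbb{Z}$, and $\mathbb{W}\!\cdot\!\mathbb{W}$ survive; the $\rho_c$-slot drops out because $\mathbb{D}^B{}_A$ has no $\mathbb{Y}$-component at this weight. Applying $\bfga^r\bfga_r = -2n = -2$ (from \eqref{gammaidentity2} with $n=1$), and the trace relations above, collapses the result to the stated first-order expression.

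Part (b) is the direct analogue: I repeat the same contraction but now with \eqref{doper94} in place of \eqref{doper34}. The coefficients of the $\delta^a_b$-term inside the $\mathbb{Z}$-injector and of the $\mathbb{W}$-injector change in a controlled way, and collecting yields the coefficient $\tfrac{5}{6}$ in place of $\tfrac{1}{3}$. For (c), the key observation is that $I$ is tractor-parallel, so the covariant derivative commutes through it; concretely,
\begin{equation*}
\Dir_s \mathcal{S}^v \phi = \bfom^{rs}\bfga_s \nabla_r\bigl(I^A{}_B \mathbb{D}^B{}_A \phi\bigr) = I^A{}_B\, \Dir_s \mathbb{D}^B{}_A \phi,
\end{equation*}
and \eqref{commutdsprolong} converts the right-hand side into $I^A{}_B \mathbb{D}^B{}_A \Dir_s \phi = \bar{\mathcal{S}}^v \Dir_s \phi$. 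The principal symbol of $\mathcal{S}^v$ is manifestly $v^r\nabla_r$, arising from the $\mathbb{Y}\!\cdot\!\mathbb{X}$ contraction alone.

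Finally, for (d), the tractor connection acts by $\mathfrak{sl}(3,\mR)$-representations on fibers, so the commutator $[I,\tilde I]^A{}_B$ of two parallel adjoint tractors is again tractor-parallel, with top slot precisely $[v,\tilde v]^a$, the Lie bracket of the associated projective Killing vector fields. Since $\mathcal{S}^v$ and $\mathcal{S}^{\tilde v}$ are first-order with symbols $v^r\nabla_r$ and $\tilde v^r\nabla_r$, the principal symbol of their commutator is $[v,\tilde v]^a\nabla_a$, matching the leading term of $\mathcal{S}^{[v,\tilde v]} = [I,\tilde I]^A{}_B \mathbb{D}^B{}_A$. The main obstacle lies in (a): several trace contractions together with the sign of $\bfga^a\bfga_a$ must combine precisely, and any convention slip propagates into (b) and (d). Once (a) is nailed down, (c) is an immediate application of parallelism together with \eqref{commutdsprolong}, and (d) follows from naturality of the tractor bracket and the symbol identity above.
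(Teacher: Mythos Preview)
Your proposal is correct and follows essentially the same route as the paper: parts (a) and (b) are exactly the contraction of \eqref{splittingsl3adjoint} with \eqref{doper34} resp.\ \eqref{doper94} that the paper invokes, and your direct use of parallelism of $I$ together with \eqref{commutdsprolong} for part (c) is precisely the mechanism underlying the paper's appeal to Theorem~\ref{mainsymbols}. The only stylistic difference is in (d): the paper verifies that the top slot of $[I,\tilde I]^{A}{}_{B}$ equals $[v,\tilde v]^a$ by an explicit injector computation using \eqref{splittingsl3adjoint} and \eqref{injoperpartscalprod}, whereas you reach the same conclusion by combining the general fact that the tractor bracket of parallel adjoint tractors projects to the Lie bracket of the underlying vector fields with the independent symbol computation $[\mathcal{S}^v,\mathcal{S}^{\tilde v}]=[v,\tilde v]^a\nabla_a+\mathrm{lot}$; both arguments are valid and yield the stated leading term.
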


\begin{proof} The first part of the claim follows from Theorem \ref{mainsymbols},
and the explicit formulas for ${\mathcal{S}}^v$ and $\bar{\mathcal{S}}^v$ from  
\eqref{splittingsl3adjoint}, \eqref{doper34}, \eqref{doper94}. Analogously, we have
\begin{align}
[I,\tilde{I} & ]^{A}{}_{B} = \,\, -
{\mathbb Y}_{a}{}^A{}_{R}v^a\big( {\mathbb Z}_{r}{}^s{}_{B}{}^{R}
((\nabla_s\tilde{v}^r)_o+\tfrac{1}{6}\delta_s{}^r(\nabla_t\tilde{v}^t)\big) 
-\tfrac{1}{3}{\mathbb W}^{R}{}_{B}(\nabla_t\tilde{v}^t)
\big)
\nonumber \\
& \,\, -
{\mathbb Y}_{a}{}^R{}_{B}\tilde{v}^a\big( {\mathbb Z}_{r}{}^s{}_{R}{}^{A}
((\nabla_s{v}^r)_o+\tfrac{1}{6}\delta_s{}^r(\nabla_t{v}^t)\big) 
-\tfrac{1}{3}{\mathbb W}^{A}{}_{R}(\nabla_t{v}^t)
\big)
\nonumber \\
& \,\, +
{\mathbb Y}_{a}{}^R{}_{B}v^a\big( {\mathbb Z}_{r}{}^s{}_{R}{}^{A}
((\nabla_s\tilde{v}^r)_o+\tfrac{1}{6}\delta_s{}^r(\nabla_t\tilde{v}^t)\big) 
-\tfrac{1}{3}{\mathbb W}^{A}{}_{R}(\nabla_t\tilde{v}^t)
\big)
\nonumber \\
& \,\, +
{\mathbb Y}_{a}{}^A{}_{R}\tilde{v}^a\big( {\mathbb Z}_{r}{}^s{}_{B}{}^{R}
((\nabla_s{v}^r)_o+\tfrac{1}{6}\delta_s{}^r(\nabla_t{v}^t)\big) 
-\tfrac{1}{3}{\mathbb W}^{R}{}_{B}(\nabla_t{v}^t)
\big)\nonumber \\
& + \mathrm{"lower\,\, adjoint\,\, tractor\,\, slots"}
\nonumber \\
= & \,\, {\mathbb Y}_{a}{}^A{}_{B}\big( 
v^s(\nabla_s\tilde{v}^a)-\tilde{v}^s(\nabla_s{v}^a)
\big) + \mathrm{"lower\,\, adjoint\,\, tractor\,\, slots"}
\nonumber \\
= & \,\, {\mathbb Y}_{a\,\,\,\, B}^{\, A}[v,\tilde{v}]^a
 + \mathrm{"lower\,\, adjoint\,\, tractor\,\, slots"},
\end{align}
which proves the second half of the claim.
\end{proof}

\section{Algebra of higher symmetries of $\Dir_s$}\label{sec:5}

In the present section we analyze the algebra structure on the vector space of higher 
symmetries of $\Dir_s$. Following Lemma \ref{1stordersymm}, the 
first order symmetries generate the tensor algebra of higher symmetry 
differential operators and hence we have to produce
$\mathfrak{sl}(3,{\mathbb R})$-invariant decomposition of the composition
of operators ${\mathbb D}^{B}{}_{A}$. As we shall argue in the end of this section, it is sufficient to compute
it just for ${\mathbb D}^{B}{}_A{\mathbb D}^{D}{}_C$.
Throughout this section we assume $w=-\tfrac34$. 

\begin{lemma}\label{ddcomposition}
The composition of two operators  ${\mathbb D}^{B}{}_A$, 
see \eqref{doper34}, equals to
\begin{align}
{\mathbb D }^{B} & {}_A \, {\mathbb D}^{D}{}_C = 
\nonumber \\
&=  {\mathbb Z}_b{}^a{}^B{}_A{\mathbb Z}_d{}^c{}^D{}_C
\big(\tfrac{1}{16}\bfga^b\bfga_a\bfga^d\bfga_c+\tfrac{1}{8}
(\bfga^b\bfga_a\delta^d_c+\bfga^d\bfga_c\delta^b_a)
+\tfrac{1}{4}\delta_a^b\delta_c^d\big)
\nonumber \\
& -\tfrac{1}{2}({\mathbb Z}_b{}^a{}^B{}_A{\mathbb W}^D{}_C
+{\mathbb Z}_b{}^a{}^D{}_C{\mathbb W}^B{}_A)
(\tfrac{1}{4}\bfga^b\bfga_a+\tfrac{1}{2}\delta^b_a)
 +\tfrac{1}{4}{\mathbb W}^B{}_A{\mathbb W}^D{}_C
\nonumber \\
& - {\mathbb X}^b{}^B{}_A{\mathbb Y}_d{}^D{}_C(\tfrac{1}{4}\bfga^d\bfga_b+\delta^d_b)
 + {\mathbb X}^b{}^B{}_A{\mathbb Z}_d{}^c{}^D{}_C
(\tfrac{1}{4}\bfga^d\bfga_c\nabla_b+\tfrac{1}{2}\delta^d_c\nabla_b+\delta^d_b\nabla_c)
\nonumber \\
& +{\mathbb Z}_b{}^a{}^B{}_A{\mathbb X}^d{}^D{}_C
(\tfrac{1}{4}\bfga^b\bfga_a+\tfrac{1}{2}\delta^b_a)\nabla_d
 -\tfrac{3}{2}{\mathbb X}^b{}^B{}_A{\mathbb W}^D{}_C\nabla_b
-\tfrac{1}{2}{\mathbb W}^B{}_A{\mathbb X}^d{}^D{}_C\nabla_d
\nonumber \\
& + {\mathbb X}^b{}^B{}_A{\mathbb X}^d{}^D{}_C
(\nabla_b\nabla_d-\tfrac{1}{4}P_{br}\bfga^r\bfga_d-P_{bd}). \label{compositiondd}
\end{align}
\end{lemma}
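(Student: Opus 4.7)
The plan is to compute the composition $\mathbb{D}^B{}_A \mathbb{D}^D{}_C$ by direct substitution of the explicit formula \eqref{doper34}. In injector notation this reads
$$
\mathbb{D}^B{}_A = \mathbb{Z}_b{}^a{}^B{}_A \bigl( \tfrac14 \bfga^b \bfga_a + \tfrac12 \delta_a{}^b \bigr) - \tfrac12 \mathbb{W}^B{}_A + \mathbb{X}^b{}^B{}_A \nabla_b,
$$
and analogously for $\mathbb{D}^D{}_C$ with $(b,a) \rightsquigarrow (d,c)$. Expanding both factors and organising the result according to the natural injector basis bilinears of $\cE^B{}_A \otimes \cE^D{}_C$ yields nine pairings. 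Six of these are immediate, since either the outer $\nabla_b$ does not appear at all or it merely commutes with a purely algebraic inner factor: they give the $\mathbb{Z}\mathbb{Z}$ slot (from expanding $(\tfrac14\bfga^b\bfga_a+\tfrac12\delta^b_a)(\tfrac14\bfga^d\bfga_c+\tfrac12\delta^d_c)$ into four monomials), the $\mathbb{Z}\mathbb{W}$, $\mathbb{W}\mathbb{Z}$ and $\mathbb{W}\mathbb{W}$ slots on the second line of \eqref{compositiondd}, and the $\mathbb{Z}\mathbb{X}$ and $\mathbb{W}\mathbb{X}$ slots on the fifth line.

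The essential step is to treat the three remaining pairings in which the outer $\mathbb{X}^b{}^B{}_A \nabla_b$ must Leibniz-differentiate the tractor injectors appearing in $\mathbb{D}^D{}_C \ph$. Using \eqref{covderinj} one obtains
\begin{align*}
\nabla_b \mathbb{Z}_d{}^c{}^D{}_C &= -P_{bd} \mathbb{X}^c{}^D{}_C - \mathbb{Y}_d{}^D{}_C \delta_b{}^c, \\
\nabla_b \mathbb{W}^D{}_C &= \mathbb{Y}_b{}^D{}_C + P_{ab} \mathbb{X}^a{}^D{}_C, \\
\nabla_b \mathbb{X}^d{}^D{}_C &= \mathbb{Z}_b{}^d{}^D{}_C - \mathbb{W}^D{}_C \delta_b{}^d,
\end{align*}
and substituting into the three outstanding pairings produces the mixed $\mathbb{X}\mathbb{Y}$, $\mathbb{X}\mathbb{Z}$ and $\mathbb{X}\mathbb{W}$ slots of \eqref{compositiondd}, together with the two pointwise corrections inside the $\mathbb{X}\mathbb{X}$ slot. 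Specifically, the coefficient $-P_{bd}$ will assemble from two equal halves: a $-\tfrac12 P_{bd}$ coming from $\nabla_b$ hitting the trace summand $\tfrac12\delta^d_c$ of $\mathbb{Z}_d{}^c{}^D{}_C(\tfrac14\bfga^d\bfga_c+\tfrac12\delta^d_c)$, and another $-\tfrac12 P_{bd}$ coming from $\nabla_b$ hitting $-\tfrac12\mathbb{W}^D{}_C$ and reproducing $-\tfrac12 P_{ab}\mathbb{X}^a{}^D{}_C$; the correction $-\tfrac14 P_{br}\bfga^r\bfga_d$ then arises from $\nabla_b$ on the gamma-matrix part $\tfrac14\bfga^d\bfga_c$ of the same $\mathbb{Z}_d{}^c$-summand.

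The main obstacle is purely combinatorial bookkeeping: every one of the nine pairings contributes, and the gamma monomials appearing in the $\mathbb{Z}\mathbb{Z}$ slot and in the Leibniz corrections have to be simplified via \eqref{gammaidentity1} and \eqref{gammaidentity3}. I note that, because $\mathbb{X}^b{}^B{}_A$ and $\mathbb{X}^d{}^D{}_C$ sit in disjoint tractor slots, there is no need to symmetrise $\nabla_b\nabla_d$ in the $\mathbb{X}\mathbb{X}$ slot, and therefore no curvature commutator of $\nabla$ on $\cS(-\tfrac34)$ enters the argument; the only $P$-terms that appear come from the injector derivative identities above. After carrying out the collection and simplification, the result matches \eqref{compositiondd} slot by slot.
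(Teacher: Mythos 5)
Your proof is correct and is essentially the paper's own argument, which the authors dismiss as ``a straightforward computation based on \eqref{doper34}, \eqref{covderinj} and \eqref{injoperpartscalprod}''; you have simply spelled out the bookkeeping. In particular you correctly isolate the only non-trivial step (Leibniz on $\mathbb{X}^b{}^B{}_A\nabla_b$ via the injector derivatives), correctly track how the $-P_{bd}$ and $-\tfrac14 P_{br}\bfga^r\bfga_d$ corrections assemble from the $\tfrac12\delta^d_c$, $\mathbb{W}$, and $\tfrac14\bfga^d\bfga_c$ pieces, and make the useful observation that no curvature commutator on $\cS(-\tfrac34)$ is needed because the slots $b$ and $d$ remain uncontracted.
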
 

\begin{proof}
The proof is a straightforward computation based on
\eqref{doper34}, \eqref{covderinj} and \eqref{injoperpartscalprod}.
\end{proof}

The composition ${\mathbb D}^{B}{}_A{\mathbb D}^{D}{}_C$ can be invariantly decomposed 
according to the $\mathfrak{sl}(3,\mR)$-module structure on the second tensor
power of its adjoint representation. We shall first examine the 
skew-symmetric part of this tensor product,
which induces the Lie algebra structure on the linear
span of symmetry operators ${\mathbb D}^{A}{}_{B}$. In fact,
we shall show that the skew-symmetric component in their composition contains
just the adjoint representation. 

\begin{lemma}
The skew-symmetric component of \eqref{compositiondd} is given by
\begin{align}\label{skewsymmetriccomp}
& {\mathbb D}^{B}{}_A{\mathbb D}^{D}{}_C 
- \,{\mathbb D}^{D}{}_{C}{\mathbb D}^{B}{}_{A} = 
\nonumber \\
&\quad= \tfrac{1}{8} {\mathbb Z}_b{}^a{}^B{}_A{\mathbb Z}_d{}^c{}^D{}_C
(\bfga^b\bfga_c\delta_a{}^d-\bfga^d\bfga_a\delta_c{}^b
+\bfga_c\bfga_a\bfom^{bd}+\bfga^b\bfga^d\bfom_{ac})
\nonumber \\
& \quad - ({\mathbb X}^b{}^B{}_A{\mathbb Y}_d{}^D{}_C
-{\mathbb Y}_d{}^B{}_A{\mathbb X}^b{}^D{}_C)
(\tfrac{1}{4}\bfga^d\bfga_b+\delta^d_b)
\nonumber \\
& \quad + ({\mathbb X}^b{}^B{}_A{\mathbb Z}_d{}^c{}^D{}_C
-{\mathbb Z}_d{}^c{}^B{}_A{\mathbb X}^b{}^D{}_C)
\delta_b{}^d\nabla_c
\nonumber \\
& \quad -({\mathbb X}^b{}^B{}_A{\mathbb W}^D{}_C
-{\mathbb W}^B{}_A{\mathbb X}^b{}^D{}_C)\nabla_b \, .
\end{align}
\end{lemma}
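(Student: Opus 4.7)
The plan is to start from the explicit formula for $\mathbb{D}^B{}_A\mathbb{D}^D{}_C$ given in Lemma \ref{ddcomposition}, obtain $\mathbb{D}^D{}_C\mathbb{D}^B{}_A$ from the same formula by the substitution $A\leftrightarrow C$, $B\leftrightarrow D$, and subtract. Since the injectors $\mathbb{Y},\mathbb{Z},\mathbb{X},\mathbb{W}$ are static tensors and commute in the ambient tensor algebra, and the dummy indices $a,b,c,d$ may be freely relabeled, the subtraction decomposes naturally by pairs of injector types. One first discards the contributions that are manifestly symmetric under the free-index swap and therefore drop out: the $\mathbb{W}\mathbb{W}$-term, the symmetrized $\mathbb{Z}\mathbb{W}$-sum in \eqref{compositiondd}, the pieces $\tfrac18(\bfga^b\bfga_a\delta^d_c+\bfga^d\bfga_c\delta^b_a)+\tfrac14\delta^b_a\delta^d_c$ of the $\mathbb{Z}\mathbb{Z}$-coefficient, and the $(\tfrac14\bfga^d\bfga_c+\tfrac12\delta^d_c)\nabla_b$ portions of the combined $\mathbb{X}\mathbb{Z}$ and $\mathbb{Z}\mathbb{X}$ clusters.

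The only genuinely substantive computation is the antisymmetric part of the leading $\mathbb{Z}\mathbb{Z}$-coefficient, namely the quartic Clifford commutator. Using the defining relation \eqref{gammaidentity1} together with the Leibniz rule one computes
\[
[\bfga^b\bfga_a,\bfga^d\bfga_c]=2\bfom^{bd}\bfga_c\bfga_a+2\bfom_{ac}\bfga^b\bfga^d+2\delta_a{}^d\bfga^b\bfga_c-2\delta^b{}_c\bfga^d\bfga_a,
\]
which after multiplication by $\tfrac{1}{16}$ and pairing with $\mathbb{Z}_b{}^a{}^B{}_A\mathbb{Z}_d{}^c{}^D{}_C$ reproduces exactly the first line of \eqref{skewsymmetriccomp}. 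The remaining three lines come from antisymmetrizing the mixed injector pairs: the $\mathbb{X}\mathbb{Y}$-contribution gives the second line, the surviving $\delta^d_b\nabla_c$-piece of the $\mathbb{X}\mathbb{Z}-\mathbb{Z}\mathbb{X}$ difference gives the third line, and the combination of $-\tfrac32\mathbb{X}\mathbb{W}\nabla$ with $-\tfrac12\mathbb{W}\mathbb{X}\nabla$ collapses, using the commutativity of the two injectors as tensors, into the overall coefficient $-1$ displayed in the fourth line.

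Finally, one must verify that the $\mathbb{X}\mathbb{X}$-term drops out. After the swap and relabeling $b\leftrightarrow d$, its contribution is proportional to
\[
(\nabla_b\nabla_d-\nabla_d\nabla_b)-\tfrac14(P_{br}\bfga^r\bfga_d-P_{dr}\bfga^r\bfga_b)-(P_{bd}-P_{db}),
\]
where the last parenthesis vanishes by symmetry of the Schouten tensor, while the first two terms cancel upon applying the spinor curvature identity $(\nabla_b\nabla_d-\nabla_d\nabla_b)\ph=-\tfrac14 R_{bd}{}^c{}_e\bfga_c\bfga^e\ph$ together with the two-dimensional projective identity $R_{bd}{}^c{}_e=\delta_b{}^cP_{de}-\delta_d{}^cP_{be}$. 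The main obstacle is not conceptual but combinatorial, namely the careful bookkeeping of signs and dummy-index renamings across the four injector types; the substantive step is the quartic Clifford commutator above, whose $\bfom$-terms are precisely what encode the symplectic character of the skew-symmetric part and ensure that it lies in the adjoint component alone.
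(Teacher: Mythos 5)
Your proposal is correct and follows the same route as the paper: both start from Lemma~\ref{ddcomposition}, obtain the reversed composition by swapping the free tractor indices (and relabeling dummies), and observe that everything except the quartic Clifford commutator in the $\mathbb{Z}\mathbb{Z}$-slot, the $\mathbb{X}\mathbb{Y}$ piece, the $\delta^d_b\nabla_c$ piece of the $\mathbb{X}\mathbb{Z}$ slot, and the $\mathbb{X}\mathbb{W}/\mathbb{W}\mathbb{X}$ combination either cancels by manifest symmetry or (for the $\mathbb{X}\mathbb{X}$ slot) vanishes by the two-dimensional identity $R_{bd}{}^r{}_s\bfga_r\bfga^s=2P_{s[d}\bfga_{b]}\bfga^s$ together with the Clifford relation. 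The paper's published proof records only the last, nontrivial $\mathbb{X}\mathbb{X}$ cancellation; you have filled in the remaining bookkeeping and the explicit commutator $[\bfga^b\bfga_a,\bfga^d\bfga_c]$ correctly.
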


\begin{proof}
This is a straightforward consequence of Lemma \ref{ddcomposition}. We notice that 
the contribution 
\begin{align}
{\mathbb X}^b{}^B{}_A{\mathbb X}^d{}^D{}_C
\big(-\tfrac{1}{4}R_{bd}{}^r{}_s\bfga_r\bfga^s-\tfrac{1}{2}P_{r[b}\bfga^r\bfga_{d]}\big)
\end{align}
is trivial due to the projective curvature tensor identity in real dimension $2$, 
$R_{bd}{}^r{}_s\bfga_r\bfga^s=2P_{s[d}\bfga_{b]}\bfga^s$, 
and an elementary identity in the symplectic Clifford algebra.
\end{proof}
\begin{lemma}\label{asymmcompos}
The commutator in the composition of
${\mathbb D}^{B}{}_{A}$ and ${\mathbb D}^{D}{}_{C}$,
\begin{align}
[{\mathbb D},{\mathbb D}]_A{}^{B}=(\mathrm{tr}({\mathbb D}\wedge{\mathbb D}))_A{}^{B}
={\mathbb D}^{B}{}_{R}{\mathbb D}^{R}{}_{A} 
- \,{\mathbb D}^{R}{}_{A}{\mathbb D}^{B}{}_{R},
\end{align}
is related to \eqref{skewsymmetriccomp} via
\begin{align}
{\mathbb D}^{B}{}_{A}{\mathbb D}^{D}{}_{C} 
- \,{\mathbb D}^{D}{}_{C}{\mathbb D}^{B}{}_{A}
=\tfrac{2}{3}\delta_{[A}{}^{(D}[{\mathbb D},{\mathbb D}]_{C]}{}^{B)}
-\tfrac{2}{3}\delta_{(C}{}^{[B}[{\mathbb D},{\mathbb D}]_{A)}{}^{D]} .
\end{align}
\end{lemma}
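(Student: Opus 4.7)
My strategy is direct verification based on the explicit formula \eqref{skewsymmetriccomp} for the skew-symmetric commutator established in the preceding lemma. The first step is to specialize \eqref{skewsymmetriccomp} to the contraction $[{\mathbb D},{\mathbb D}]_{A}{}^{B} = {\mathbb D}^{B}{}_{R}{\mathbb D}^{R}{}_{A} - {\mathbb D}^{R}{}_{A}{\mathbb D}^{B}{}_{R}$, which identifies an upper tractor index of one factor with a lower tractor index of the other. Using the injector contractions \eqref{injoperpartscalprod}, \eqref{injoperscalprod} together with the symplectic Clifford algebra identities \eqref{gammaidentity1}--\eqref{gammaidentity3} specialized to real dimension two, one obtains $[{\mathbb D},{\mathbb D}]_{A}{}^{B}$ as an explicit section of the adjoint tractor bundle $\lA_{\mathfrak{d}}$ written out in the four injector slots $\mathbb{Y},\mathbb{Z},\mathbb{W},\mathbb{X}$.

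The second step is to expand the right-hand side of the asserted identity. After expansion of the symmetrization and antisymmetrization brackets, the two terms $\tfrac{2}{3}\delta_{[A}{}^{(D}[{\mathbb D},{\mathbb D}]_{C]}{}^{B)}$ and $-\tfrac{2}{3}\delta_{(C}{}^{[B}[{\mathbb D},{\mathbb D}]_{A)}{}^{D]}$ partially cancel and collapse to the simpler combination $\tfrac{1}{3}\bigl(\delta_{A}{}^{D}[{\mathbb D},{\mathbb D}]_{C}{}^{B} - \delta_{C}{}^{B}[{\mathbb D},{\mathbb D}]_{A}{}^{D}\bigr)$. Substituting the outcome of Step~1 and re-expanding into the injector basis, each slot is then matched against the corresponding slot of \eqref{skewsymmetriccomp}.

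Conceptually the content is representation-theoretic: as $\mathfrak{sl}(3,\mathbb{R})$-modules one has $\Lambda^{2}(\mathfrak{sl}(3,\mathbb{R})) = \mathfrak{sl}(3,\mathbb{R}) \oplus V_{(3,0)} \oplus V_{(0,3)}$, and the lemma asserts that the components of ${\mathbb D}\wedge{\mathbb D}$ lying in $V_{(3,0)} \oplus V_{(0,3)}$ vanish; the surviving adjoint component is then canonically the $\delta$-reassembly of its own trace, which is precisely $[{\mathbb D},{\mathbb D}]$. This explains both the numerical factor $\tfrac{2}{3}$ and the specific index structure of the right-hand side, and can be used as an independent consistency check: the unique nontrivial trace $T^{BR}_{RC}$ of the putative identity reproduces $[{\mathbb D},{\mathbb D}]_{C}{}^{B}$ on both sides.

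The main technical obstacle is controlling the $\mathbb{Z}\mathbb{Z}$-slot. Concretely, one must verify that the gamma combination $\bfga^{b}\bfga_{c}\delta_{a}{}^{d} - \bfga^{d}\bfga_{a}\delta_{c}{}^{b} + \bfga_{c}\bfga_{a}\bfom^{bd} + \bfga^{b}\bfga^{d}\bfom_{ac}$, together with the $\mathbb{X}\mathbb{X}$-curvature correction, reduces upon the appropriate tractor traces to a pure Kronecker-delta-times-trace structure with no residual trace-free antisymmetric part. This exploits the idiosyncrasies of the symplectic Clifford algebra in real dimension two and the projective curvature identity $R_{bd}{}^{r}{}_{s} = 2\delta_{[b}{}^{r}P_{d]s}$ already invoked in the preceding lemma, which was what made the $\mathbb{X}\mathbb{X}$-slot trivial there and prevents its reappearance here.
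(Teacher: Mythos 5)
Your proposal is correct in substance and captures the right representation-theoretic picture, but it takes a genuinely different route from the paper. The paper does not match injector slots of the commutator against a $\delta$-reassembly of the trace. Instead, it directly proves that the non-adjoint part of ${\mathbb D}\wedge{\mathbb D}$ vanishes by contracting the antisymmetrized composition $2\,{\mathbb D}^{(B}{}_{[A}{\mathbb D}^{D)}{}_{C]}$ with the tractor volume form ${\bfep}^{EAC}=X^{[A}Y_b{}^{B}Y^{C]}{}_c\bfom^{bc}$: this contraction is exactly what picks out the $V_{(3,0)}\oplus V_{(0,3)}$ components, and a short computation using \eqref{omegaidentities} shows it vanishes identically. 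Once that vanishing is established, the claimed $\delta$-reassembly formula follows instantly from representation theory without any slot matching. Your approach -- computing $[{\mathbb D},{\mathbb D}]_{A}{}^{B}$ explicitly by tracing \eqref{skewsymmetriccomp}, substituting it into the collapsed form $\tfrac{1}{3}\bigl(\delta_{A}{}^{D}[{\mathbb D},{\mathbb D}]_{C}{}^{B}-\delta_{C}{}^{B}[{\mathbb D},{\mathbb D}]_{A}{}^{D}\bigr)$, and verifying slot by slot -- is logically sound (your collapse of the $\tfrac23$-brackets to $\tfrac13$ of two terms is correct, and the normalization is right since the re-embedded trace indeed reproduces itself under the unique nontrivial cross-trace). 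But it is considerably heavier: one must re-expand $\delta^{A}{}_{B}={\mathbb Z}_a{}^{a}{}^{A}{}_{B}+{\mathbb W}^{A}{}_{B}$ and track operator-valued coefficients through all of $\mathbb{Z}\mathbb{Z}$, $\mathbb{X}\mathbb{Y}/\mathbb{Y}\mathbb{X}$, $\mathbb{X}\mathbb{Z}/\mathbb{Z}\mathbb{X}$, and $\mathbb{X}\mathbb{W}/\mathbb{W}\mathbb{X}$. The paper's $\bfep$-contraction collapses all of this into a single short identity, with only the $\mathbb{Z}\mathbb{Z}$ and $\mathbb{X}\mathbb{Y}$ slots contributing after the $\nabla$-slots cancel. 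What your route buys is that it implicitly establishes the subsequent formula $[{\mathbb D},{\mathbb D}]^B{}_C=3\,{\mathbb D}^B{}_C$ along the way, whereas the paper treats that as a separate (and shorter) lemma; what the paper's route buys is economy and a cleaner separation of the representation-theoretic vanishing from the explicit trace computation.
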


\begin{proof}
We recall the formula for the tractor volume form 
${\bfep}^{[ABC]}:=X^{[A}Y_b{}^{B}Y^{C]}{}_c\bfom^{bc}$, which satisfies
the following properties:
\begin{align}\label{omegaidentities}
&{\bfep}^{EAC}X^a{}_AX^c{}_C = \,\tfrac{1}{3}X^E\bfom^{ac},
\nonumber \\
&{\bfep}^{EAC}X^b{}_{[A}Y_{C]} = -\tfrac{2}{3}X^AY_e{}^EY_c{}^C\bfom^{ec}X^b{}_{[A}Y_{C]}
=\tfrac{1}{3}Y_e{}^E\bfom^{eb}.
\end{align}
The contraction of 
\begin{align}
2{\mathbb D}^{(B}{}_{[A}{\mathbb D}^{D)}{}_{C]} = & \,\,\, 
\tfrac{1}{2}Y^B{}_bY^D{}_dX^a{}_AX^c{}_C
\big( \bfga^{(b}\bfga^{d)}\bfom_{ac}+2\delta^{(b}{}_{[a}\bfga^{d)}\bfga_{c]}\big)
\nonumber \\
& - X^{(B}Y^{D)}{}_dX^b{}_{[A}Y_{C]}\big( \tfrac{1}{2}\bfga^{d}\bfga_{b}+
\delta^{d}{}_{b}\big)
\nonumber \\
& +2X^{(B}Y^{D)}{}_dX^b{}_{[A}X^c{}_{C]}\delta^{d}{}_{[b}\nabla_{c]}
\nonumber \\
& -2X^{(B}X^{D)}X^b{}_{[A}Y_{C]}\nabla_b
\end{align}
by ${\bfep}^{EAC}$ yields (notice that the last two terms on the last display cancel out
after contraction by ${\bfep}$ due to \eqref{omegaidentities})
\begin{align}
{\bfep}^{EAC}{\mathbb D}^{(B}{}_{[A}{\mathbb D}^{D)}{}_{C]} =
\tfrac{1}{3}X^{(E}Y_b{}^{B}Y_d{}^{D)}\bfga^{(b}\bfga^{d)}
-\tfrac{1}{3}Y_e{}^{(E}X^{B}Y_d{}^{D)}\bfga^{(d}\bfga^{b)}=0.
\end{align}
This result implies immediately that the only irreducible component 
in the skew-symmetric part of the composition is the Lie bracket, 
which completes the proof. 
\end{proof}

\begin{lemma}
The skew-symmetric component in the composition 
of two operators ${\mathbb D}^A{}_B$ contains 
only the trace part, and can consequently be 
simplified as
\begin{align}\label{commd3d}
[{\mathbb D},{\mathbb D}]^B{}_{C}
={\mathbb D}^{B}{}_{R}{\mathbb D}^{R}{}_{C} 
- \,{\mathbb D}^{R}{}_{C}{\mathbb D}^{B}{}_{R}
=3{\mathbb D}^{B}{}_{C}.
\end{align}
\end{lemma}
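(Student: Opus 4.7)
The plan is to exploit Lemma \ref{asymmcompos}, which asserts that the skew-symmetric composition $\mathbb{D}^B{}_A\mathbb{D}^D{}_C - \mathbb{D}^D{}_C\mathbb{D}^B{}_A$ is determined entirely by its trace $[\mathbb{D},\mathbb{D}]^B{}_C$. It therefore suffices to compute this trace directly and identify it with $3\mathbb{D}^B{}_C$. Concretely, I contract the explicit formula \eqref{skewsymmetriccomp} in the indices $A$ and $D$ (setting $A = D = R$ and summing), then compare the result slot-by-slot with the expansion of $3\mathbb{D}^B{}_C$ from \eqref{doper34}.

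The trace of each of the four terms in \eqref{skewsymmetriccomp} is obtained by applying the injector identities \eqref{injoperpartscalprod}, supplemented by the elementary tractor contractions $Y_R X^R = 1$ and $X^b{}_R X^R = 0$ which follow from the completeness relation $Y_A X^B + X_A{}^c Y^B{}_c = \delta_A{}^B$. The $\mathbb{Z}\mathbb{Z}$-term collapses via $\mathbb{Z}_b{}^a{}^B{}_R\mathbb{Z}_d{}^c{}^R{}_C = \delta_d{}^a\,\mathbb{Z}_b{}^c{}^B{}_C$ to a single $\mathbb{Z}$-injector multiplied by a reduced Clifford polynomial obtained using $\bfga^d\bfga_d = -2$ from \eqref{gammaidentity2} (in dimension $n=1$), the symplectic-form contractions $\bfga^d\bfom_{dc} = \bfga_c$ and $\bfga_d\bfom^{bd} = \bfga^b$, and the commutator \eqref{gammaidentity1}. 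The $\mathbb{X}\mathbb{Y}$-term splits, via $\mathbb{X}^b{}^B{}_R\mathbb{Y}_d{}^R{}_C = \delta_d{}^b\mathbb{W}^B{}_C$ and $\mathbb{X}^b{}^R{}_C\mathbb{Y}_d{}^B{}_R = \mathbb{Z}_d{}^b{}^B{}_C$, into a $\mathbb{W}$-piece and an additional $\mathbb{Z}$-piece. The $\mathbb{X}\mathbb{Z}$- and $\mathbb{X}\mathbb{W}$-terms reduce to pure $\mathbb{X}$-injector contributions carrying the derivatives, with one summand of each vanishing due to $X^b{}_R X^R = 0$.

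Adding up contributions slot by slot, the $\mathbb{W}$-coefficient becomes $-\tfrac{3}{2}$, the $\mathbb{X}$-coefficient reduces to $3\nabla_b$, and the $\mathbb{Z}$-coefficient assembles as $\tfrac{3}{4}\bfga^b\bfga_c + \tfrac{3}{2}\delta^b{}_c$, while the $\mathbb{Y}$-slot vanishes identically. By \eqref{doper34} these are exactly the injector slots of $3\mathbb{D}^B{}_C$, establishing \eqref{commd3d}. The main obstacle is the bookkeeping for the $\mathbb{Z}\mathbb{Z}$-term: after the $\delta_d{}^a$-trace, the four-$\bfga$ expression $2\bfga^b\bfga_c + 2\delta_c{}^b + \bfga_c\bfga_d\bfom^{bd} + \bfga^b\bfga^d\bfom_{dc}$ must be reduced, using $\bfga_c\bfga^b = \bfga^b\bfga_c + 2\delta^b{}_c$ together with the above symplectic contractions, to $4\bfga^b\bfga_c + 4\delta^b{}_c$; this yields precisely the $\mathbb{Z}$-contribution $\tfrac12\bfga^b\bfga_c + \tfrac12\delta^b{}_c$ that, combined with the $\mathbb{Z}$-piece $\tfrac14\bfga^d\bfga_b + \delta^d{}_b$ from the $\mathbb{X}\mathbb{Y}$-term, completes the required $\mathbb{Z}$-slot coefficient of $3\mathbb{D}^B{}_C$.
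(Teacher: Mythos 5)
Your proposal is correct and follows essentially the same route as the paper: both apply the trace $\delta^A{}_D$ to \eqref{skewsymmetriccomp}, reduce the injector compositions via \eqref{injoperpartscalprod}, and match the resulting $\mathbb{Z}$-, $\mathbb{W}$-, and $\mathbb{X}$-slot coefficients against three times the expression \eqref{doper34}. Your slot-by-slot bookkeeping (including the $\mathbb{Z}\mathbb{Z}$-reduction to $\tfrac12\bfga^b\bfga_c+\tfrac12\delta^b{}_c$ and the vanishing terms from $X^b{}_R X^R=0$) agrees with the paper's intermediate display and its final simplification.
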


\begin{proof}
We apply the trace $\delta^A_D$ to \eqref{skewsymmetriccomp}, and 
use the identities \eqref{covderinj}, \eqref{injoperpartscalprod} and 
\eqref{injoperscalprod}:
\begin{align}
[{\mathbb D},{\mathbb D}]^B{}_{C} = & \, 
\tfrac{1}{8}{\mathbb Z}_b{}^{c}{}^{B}{}_{C}
\big( 2\bfga^b\bfga_c +2\delta_c{}^b+\bfga_c\bfga^b+\bfga^b\bfga_c\big)
\nonumber \\
& -\delta^b{}_d{\mathbb W}^{B}{}_{C}(\tfrac{1}{4}\bfga^d\bfga_b +\delta^d{}_b) 
+{\mathbb Z}_d{}^{b}{}^{B}{}_{C}(\tfrac{1}{4}\bfga^d\bfga_b +\delta^d{}_b)
\nonumber \\
& + {\mathbb X}^c{}^{B}{}_{C}\delta^b{}_d\delta_b{}^d\nabla_c
+ {\mathbb X}^b{}^{B}{}_{C}\nabla_b .
\end{align}
By elementary manipulations in the symplectic Clifford algebra, the last 
display equals to 
\begin{align}
3\big(
{\mathbb Z}_b{}^{c}{}^{B}{}_{C}(\tfrac{1}{4}\bfga^b\bfga_c 
+\tfrac{1}{2}\delta_c{}^{b})
-\tfrac{1}{2}{\mathbb W}^{B}{}_{C}
+{\mathbb X}^b{}^{B}{}_{C}\nabla_b
\big),
\end{align}
which proves by \eqref{doper34} the statement of the lemma.
\end{proof}

Now we pass to the analysis of the symmetric part 
\begin{align}
({\mathbb D}\odot{\mathbb D})^{B}{}_{A}{}^{D}{}_{C} =
{\mathbb D}^{B}{}_{A}{\mathbb D}^{D}{}_{C} 
+ \,{\mathbb D}^{D}{}_{C}{\mathbb D}^{B}{}_{A}
\end{align}
of the composition
${\mathbb D}^{A}{}_{B}{\mathbb D}^{C}{}_{D}$. 
The following notation turns out to be convenient for our purposes:
\begin{align} 
\big({\mathbb D}^2\big)^{A}{}_{B} = \,
{\mathbb D}^{A}{}_{R}{\mathbb D}^{R}{}_{B} 
+ \,{\mathbb D}^{R}{}_{B}{\mathbb D}^{A}{}_{R},
\, \langle {\mathbb D}, {\mathbb D} \rangle = \,
{\mathbb D}^{S}{}_{R}{\mathbb D}^{R}{}_{S},
\, {\mathbb D}^2_o = \, \mathrm{tf}\big({\mathbb D}^2\big).
\end{align}

\begin{lemma}\label{symmpartcomp}
The composition ${\mathbb D}^{R}{}_{B}{\mathbb D}^{A}{}_{R}$
of ${\mathbb D}^{R}{}_{B}$  is given by 
\begin{align}\label{ddsquare}
{\mathbb D}^{R}{}_{B}{\mathbb D}^{A}{}_{R} = \, &
\tfrac{1}{8} {\mathbb Z}_d{}^{a}{}^{A}{}_{B} (-3\bfga^d\bfga_a -10\delta_a{}^d)
+ {\mathbb X}^a{}^{A}{}_{B}\delta_b{}^d(\tfrac{1}{4}\bfga^b\bfga_a +\tfrac{1}{2}\delta_a{}^b)\nabla_d
\nonumber \\
& - \tfrac{3}{2}{\mathbb X}^c{}^{A}{}_{B}\nabla_c +\tfrac{1}{4}{\mathbb W}^{A}{}_{B}.
\end{align} 
Consequently, we have
\begin{align}
& {({\mathbb D}^2)}^A{}_{B}  = -\delta^{A}{}_{B}
+\tfrac{1}{2}{\mathbb X}^{b}{}^A{}_{B}\bfga_b\Dir_s ,
\nonumber \\
& {({\mathbb D}_o^2)}^A{}_{B}\, = 
\tfrac{1}{2}{\mathbb X}^b{}^{A}{}_{B}\bfga_b\Dir_s. 
\label{dsquare}
\end{align}
\end{lemma}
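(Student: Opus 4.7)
The plan is to derive \eqref{ddsquare} by an appropriate tractor-index contraction of the general composition formula \eqref{compositiondd}, and then to deduce \eqref{dsquare} from \eqref{ddsquare} together with the commutator relation \eqref{commd3d}.

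For \eqref{ddsquare}, I would relabel the free indices in \eqref{compositiondd} so that the second upper and the first lower tractor indices coincide with the contracted $R$, then apply the injector contraction rules \eqref{injoperpartscalprod}--\eqref{injoperscalprod} term by term. Each of the six tensor-valued blocks in \eqref{compositiondd} simplifies individually: the ${\mathbb Z}{\mathbb Z}$-block collapses to a single ${\mathbb Z}$-block via ${\mathbb Z}_b{}^a{}^R{}_B {\mathbb Z}_d{}^c{}^A{}_R = {\mathbb Z}_b{}^c{}^A{}_B \delta_d{}^a$, the ${\mathbb X}{\mathbb Y}$- and ${\mathbb Y}{\mathbb X}$-cross blocks produce ${\mathbb W}$- and ${\mathbb Z}$-contributions, the mixed ${\mathbb Z}{\mathbb W}$- and ${\mathbb W}{\mathbb Z}$-blocks yield ${\mathbb X}$-contributions, and the ${\mathbb X}{\mathbb X}$-block vanishes since ${\mathbb X}^b{}^R{}_B {\mathbb X}^d{}^A{}_R = 0$. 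Using in addition the dimension-two gamma-matrix identities $\bfga^k \bfga_k = -2$ and $\bfga^a \bfga_b = \bfga_b \bfga^a - 2\delta^a{}_b$ (both consequences of \eqref{gammaidentity1}--\eqref{gammaidentity2}), the resulting scalar coefficients collect into the right-hand side of \eqref{ddsquare}.

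For \eqref{dsquare}, the key input is \eqref{commd3d} in the form ${\mathbb D}^A{}_R {\mathbb D}^R{}_B = {\mathbb D}^R{}_B {\mathbb D}^A{}_R + 3{\mathbb D}^A{}_B$, which immediately gives $({\mathbb D}^2)^A{}_B = 2{\mathbb D}^R{}_B {\mathbb D}^A{}_R + 3{\mathbb D}^A{}_B$. I would substitute \eqref{ddsquare} and \eqref{doper34} on the right, and collect each injector block. The ${\mathbb Z}$-coefficient reduces, after an algebraic cancellation of the $\bfga^d \bfga_a$ and $\delta_a{}^d$ contributions, to $-\delta_a{}^d$, producing the single term $-{\mathbb Z}_d{}^d{}^A{}_B$; the ${\mathbb W}$-coefficient assembles to $-{\mathbb W}^A{}_B$; and the ${\mathbb X}$-coefficient rearranges to ${\mathbb X}^a{}^A{}_B(\tfrac12 \bfga^d \bfga_a \nabla_d + \nabla_a)$. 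Invoking the normalization identity $Y^A{}_c X^c{}_B = \delta^A{}_B - {\mathbb W}^A{}_B$ to simplify ${\mathbb Z}_d{}^d{}^A{}_B$ collapses the combined ${\mathbb Z}$- and ${\mathbb W}$-contributions to $-\delta^A{}_B$, while $\bfga^d\bfga_a = \bfga_a\bfga^d - 2\delta^d{}_a$ reorganizes the ${\mathbb X}$-part into $\tfrac12 {\mathbb X}^b{}^A{}_B \bfga_b \Dir_s$. The trace-free formula then follows by noting that the injector normalization forces ${\mathbb X}^b{}^A{}_A = X^A X_A{}^b = 0$, so ${\mathbb X}^b{}^A{}_B$ is already trace-free in the tractor indices, and only the $-\delta^A{}_B$ summand contributes to the trace part.

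The main obstacle will be the careful bookkeeping of contractions from \eqref{injoperpartscalprod}: each identity there is index-position specific, and the relabelings needed to go from \eqref{compositiondd} to ${\mathbb D}^R{}_B {\mathbb D}^A{}_R$ involve several such contractions, where swapping an up-down pair would produce the wrong injector block. A secondary technical point is tracking numerical coefficients through the symplectic Clifford manipulations, where in dimension $n=1$ the repeated use of $\bfga^k\bfga_k = -2$ and $\bfga_k\bfga^k = 2$ is easy to mis-sign.
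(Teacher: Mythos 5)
Your overall strategy is exactly the paper's: obtain \eqref{ddsquare} as the appropriate trace contraction of \eqref{compositiondd}, then use the commutator identity \eqref{commd3d} to reduce $({\mathbb D}^2)^A{}_B = 2{\mathbb D}^R{}_B{\mathbb D}^A{}_R + 3{\mathbb D}^A{}_B$ and substitute. The second half of your argument (the derivation of \eqref{dsquare} from \eqref{ddsquare}, the normalization identity ${\mathbb Z}_c{}^c{}^A{}_B = \delta^A{}_B - {\mathbb W}^A{}_B$, and ${\mathbb X}^b{}^A{}_A = 0$) is correct.

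However, your block-by-block bookkeeping of the contraction producing \eqref{ddsquare} contains several concrete errors, and they are precisely of the kind you warn about in your final paragraph. The contraction formula you quote for the $\mathbb{Z}\mathbb{Z}$-block is wrong: the required product is ${\mathbb Z}_b{}^a{}^R{}_B{\mathbb Z}_d{}^c{}^A{}_R = \delta_b{}^c\,{\mathbb Z}_d{}^a{}^A{}_B$, not ${\mathbb Z}_b{}^c{}^A{}_B\delta_d{}^a$; carrying your version through gives a $\mathbb{Z}$-coefficient of $\tfrac18\bfga^b\bfga_c - \tfrac34\delta_c{}^b$ instead of the correct $\tfrac18(-3\bfga^d\bfga_a - 10\delta_a{}^d)$. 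Your routing of the remaining blocks is also off: under the contraction $\cE^R{}_B\otimes\cE^A{}_R$ the $\mathbb{Z}\mathbb{W}$ and $\mathbb{W}\mathbb{Z}$ blocks vanish (they hit $Y^R{}_bY_R = 0$ or $X^RX_R{}^a = 0$), the $\mathbb{W}$-term in \eqref{ddsquare} comes from the $\mathbb{W}\mathbb{W}$ block, and the $\mathbb{X}$-terms come from the $\mathbb{Z}\mathbb{X}$ and $\mathbb{X}\mathbb{W}$ blocks (the $\mathbb{W}\mathbb{X}$ and $\mathbb{X}\mathbb{Z}$ blocks die). Also there is no $\mathbb{Y}\mathbb{X}$ block in \eqref{compositiondd}; the single $\mathbb{X}\mathbb{Y}$ block contributes to $\mathbb{Z}$ only. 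So while the method is right, the sketch as written would not reproduce the stated coefficients; each of the nine injector products in \eqref{compositiondd} needs to be contracted in the specific up--down pattern $\cE^R{}_B\cdot\cE^A{}_R$ using the normalizations $Y_RX^R = 1$, $X_R{}^aY^R{}_b = \delta^a{}_b$, $Y^R{}_aY_R = 0$, $X^RX_R{}^a = 0$, rather than read off naively from \eqref{injoperpartscalprod}, which lists only the pattern $\cE^B{}_R\cdot\cE^R{}_C$.
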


\begin{proof}
The first equality \eqref{ddsquare} is the trace component of  
Lemma \eqref{ddcomposition}. 
By \eqref{commd3d}, we have 
\begin{align}
{({\mathbb D}^2)}^A{}_{B} = &\,\, {\mathbb D}^{A}{}_{R}{\mathbb D}^{R}{}_{B} 
+ {\mathbb D}^{R}{}_{B}{\mathbb D}^{A}{}_{R}=
({\mathbb D}^{A}{}_{R}{\mathbb D}^{R}{}_{B} -
 {\mathbb D}^{R}{}_{B}{\mathbb D}^{A}{}_{R})
+2\,{\mathbb D}^{R}{}_{B}{\mathbb D}^{A}{}_{R}
\nonumber \\
=&\,\, 3{\mathbb D}^{A}{}_{B}+2\,{\mathbb D}^{R}{}_{B}{\mathbb D}^{A}{}_{R},
\end{align}
and the substitution from an elementary formula 
\begin{align}\label{dsquaredecomposition}
{\mathbb D}^{R}{}_{B}{\mathbb D}^{A}{}_{R}=
-\tfrac{3}{2}{\mathbb D}^A{}_{B}
-\tfrac{1}{2}\delta^{A}{}_{B}
+\tfrac{1}{4}{\mathbb X}^b{}^{A}{}_{B}\bfga_b\Dir_s 
\end{align}
implies \eqref{dsquare}. We used the identity decomposition 
$\delta^{A}{}_{B}={\mathbb Z}_a{}^{a}{}^{A}{}_{B}+{\mathbb W}^{A}{}_{B}$
in order to separate the trace and the trace free components.
The proof is complete.
\end{proof}

\begin{lemma}\label{symmcompos}
The symmetric component in the composition of two 
operators ${\mathbb D}^{A}{}_{B}$ equals to
\begin{align}
\tfrac{1}{2}({\mathbb D}^{A}{}_{B}{\mathbb D}^{C}{}_{D} 
+ \,{\mathbb D}^{C}{}_{D}{\mathbb D}{}^{A}{}_{B})
={\mathbb D}^{(A}{}_{(B}{\mathbb D}^{C)}{}_{D)}+
{\mathbb D}^{[A}{}_{[B}{\mathbb D}^{C]}{}_{D]},
\end{align}
where
\begin{align}
{\mathbb D}^{(A}{}_{(B}{\mathbb D}^{C)}{}_{D)}=&\,\,
\mathrm{tf}\big( {\mathbb D}^{(A}{}_{(B}{\mathbb D}^{C)}{}_{D)} \big)
+\tfrac{2}{5}\delta^{(A}{}_{(B}{\big({\mathbb D}_o^2\big)}^{C)}{}_{D)}
\nonumber \\
& +\tfrac{1}{12}\delta^{(A}{}_{(B}\delta^{C)}{}_{D)}\langle {\mathbb D}, {\mathbb D} \rangle,
\nonumber\\\label{squareab}
{\mathbb D}^{[A}{}_{[B}{\mathbb D}^{C]}{}_{D]}=&\,
-2\delta^{[A}{}_{[B}{\big({\mathbb D}_o^2\big)}^{C]}{}_{D]}
-\tfrac{1}{6}\delta^{[A}{}_{[B}\delta^{C]}{}_{D]}\langle {\mathbb D}, {\mathbb D} \rangle .
\end{align}
Moreover, 
$\langle {\mathbb D}, {\mathbb D} \rangle =-\frac{3}{2}$.
\end{lemma}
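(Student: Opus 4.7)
The statement splits into three parts: (a) the purely algebraic identity that the displayed sum of Young-symmetrized pieces recovers the symmetric part of the composition, (b) the explicit expansions of $\mathbb{D}^{(A}{}_{(B}\mathbb{D}^{C)}{}_{D)}$ and $\mathbb{D}^{[A}{}_{[B}\mathbb{D}^{C]}{}_{D]}$, and (c) the scalar evaluation $\langle\mathbb{D},\mathbb{D}\rangle=-\tfrac{3}{2}$. I would handle them in that order.

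For (a), I would just expand both Young projectors: each of $\mathbb{D}^{(A}{}_{(B}\mathbb{D}^{C)}{}_{D)}$ and $\mathbb{D}^{[A}{}_{[B}\mathbb{D}^{C]}{}_{D]}$ is the quarter-sum of the four products obtained by independently (anti)symmetrizing on the upper and lower pairs, and the ``mixed-sign'' terms cancel to give $\tfrac12(\mathbb{D}^A{}_B\mathbb{D}^C{}_D+\mathbb{D}^C{}_D\mathbb{D}^A{}_B)$. No input from analysis is needed.

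For the first formula in (b), I would invoke the representation-theoretic splitting $S^2\mathfrak{sl}(3,\mathbb{R})\cong \boxtimes^2\mathfrak{sl}(3,\mathbb{R})\oplus\mathfrak{sl}(3,\mathbb{R})\oplus\mathbb{R}$: since $\mathbb{D}^A{}_B$ is a section of $\tf(\mathcal{E}^A{}_B)$, its symmetric square decomposes uniquely as a trace-free Cartan piece plus $\alpha\,\delta^{(A}{}_{(B}(\mathbb{D}^2_o)^{C)}{}_{D)}$ plus $\beta\,\delta^{(A}{}_{(B}\delta^{C)}{}_{D)}\langle\mathbb{D},\mathbb{D}\rangle$. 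The coefficients $\alpha,\beta$ are then pinned down by taking single and double traces of both sides, using the elementary facts $\mathbb{D}^A{}_A=0$, $\delta^A{}_A=3$, $(\mathbb{D}^2)^A{}_A=2\langle\mathbb{D},\mathbb{D}\rangle$, and the definitional trace decomposition $(\mathbb{D}^2)^A{}_B=(\mathbb{D}^2_o)^A{}_B+\tfrac{2}{3}\delta^A{}_B\langle\mathbb{D},\mathbb{D}\rangle$; this produces a $2\times 2$ linear system yielding the coefficients $\tfrac{2}{5}$ and $\tfrac{1}{12}$.

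For the second formula in (b), the subtlety is that $\Lambda^2\mathfrak{sl}(3,\mathbb{R})\cong\mathfrak{sl}(3,\mathbb{R})\oplus(3,0)\oplus(0,3)$, so a priori two extra $10$-dimensional components could contribute besides the adjoint and the trivial. I would verify their absence by the same device used in Lemma \ref{asymmcompos}: contract $\mathbb{D}^{[A}{}_{[B}\mathbb{D}^{C]}{}_{D]}$, in the form given by Lemma \ref{ddcomposition}, with the tractor volume form $\bfep^{EAC}$ (resp.\ $\bfep_{EBD}$), which is exactly the projector onto the $(3,0)$ (resp.\ $(0,3)$) summand. Using the injector identities \eqref{omegaidentities} and the symplectic Clifford relations \eqref{gammaidentity1}--\eqref{gammaidentity3}, together with the fact that in real dimension two every $2$-form is proportional to $\bfom$, the resulting expression collapses to adjoint/scalar type modulo $\Dir_s$. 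Once this vanishing is in place, single- and double-trace matching as in the symmetric case fixes the coefficients $-2$ and $-\tfrac{1}{6}$.

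For (c), I would trace \eqref{dsquare} directly: $(\mathbb{D}^2)^A{}_A=-\delta^A{}_A+\tfrac12\mathbb{X}^b{}^A{}_A\bfga_b\Dir_s$. The key observation is that $\mathbb{X}^b{}^A{}_A=X^A X^b{}_A=0$, because $X^A\in\mathcal{E}^A(1)$ inserts into the bottom slot of the standard tractor while $X^b{}_A\in\mathcal{E}_A{}^b(-1)$ projects from the top slot, so the two injectors land in non-dual factors. Hence $(\mathbb{D}^2)^A{}_A=-3$, and dividing by $2$ gives $\langle\mathbb{D},\mathbb{D}\rangle=-\tfrac{3}{2}$. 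The main obstacle in the whole argument is the third step of (b): the explicit verification that the $(3,0)$ and $(0,3)$ projections of $\mathbb{D}^{[A}{}_{[B}\mathbb{D}^{C]}{}_{D]}$ vanish, where one has to unpack Lemma \ref{ddcomposition} and push the symplectic Clifford and curvature identities of dimension two through the $\bfep$-contractions.
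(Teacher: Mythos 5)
Your parts (a) and (c), and the first expansion in (b), coincide with what the paper does: the two coefficients $\tilde{A},\tilde{B}$ are fixed by the double trace (giving $\tfrac1{12}$) and then the single trace (giving $\tfrac25$), and $\langle\mathbb{D},\mathbb{D}\rangle=-\tfrac32$ follows by tracing \eqref{dsquaredecomposition} (equivalently \eqref{dsquare}); the paper leaves $\mathbb{X}^{b}{}^{A}{}_{A}=X^{A}X^{b}{}_{A}=0$ implicit, which you correctly identify.

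The substantive issue is in the second expansion of (b), where you declare as the ``main obstacle'' the need to kill $(3,0)$ and $(0,3)$ components of $\mathbb{D}^{[A}{}_{[B}\mathbb{D}^{C]}{}_{D]}$ via $\bfep$-contraction. This rests on a misidentification of the module. The decomposition $\Lambda^{2}\mathfrak{sl}(3,\mathbb{R})\cong\mathfrak{sl}(3,\mathbb{R})\oplus(3,0)\oplus(0,3)$ governs the \emph{pair-exchange antisymmetric} part $\tfrac12(\mathbb{D}^{A}{}_{B}\mathbb{D}^{C}{}_{D}-\mathbb{D}^{C}{}_{D}\mathbb{D}^{A}{}_{B})$; that is the content of Lemma \ref{asymmcompos}, and there the $\bfep$-contraction is applied to the mixed Young pieces $\mathbb{D}^{(B}{}_{[A}\mathbb{D}^{D)}{}_{C]}$, which indeed land in $(3,0)\oplus(1,1)$. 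By contrast, the tensor $\mathbb{D}^{[A}{}_{[B}\mathbb{D}^{C]}{}_{D]}$ is antisymmetrized separately over the upper and lower pairs, so it is a section of $\Lambda^{2}\cE^{A}\otimes\Lambda^{2}\cE_{B}$, which for $SL(3)$ is isomorphic via the tractor volume form to $\cE_{A}\otimes\cE^{B}$ and hence decomposes as $(1,1)\oplus(0,0)$ only. There is simply no $(3,0)$ or $(0,3)$ to worry about; the two-term ansatz in \eqref{squareab} is automatically exhaustive, and the single and double traces already pin down $-2$ and $-\tfrac16$, which is exactly what the paper's ``proved analogously'' means. (In fact your proposed contraction would not even produce the $(3,0)$ projector: $\bfep^{EAC}\mathbb{D}^{[A}{}_{[B}\mathbb{D}^{C]}{}_{D]}$ lands in $\cE^{E}\otimes\Lambda^{2}\cE_{B}\cong(1,0)\otimes(1,0)=(2,0)\oplus(0,1)$.) So the step you regard as the hard part of the whole argument is a pure irreducible-counting statement requiring no Clifford or curvature identities at all.
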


\begin{proof}
As for the term ${\mathbb D}^{(A}{}_{(B}{\mathbb D}^{C)}{}_{D)}$, we can write
\begin{align}
{\mathbb D}^{(A}{}_{(B}{\mathbb D}^{C)}{}_{D)}=&\, 
\mathrm{tf}\big( {\mathbb D}^{(A}{}_{(B}{\mathbb D}^{C)}{}_{D)} \big)
+\tilde{A}\delta^{(A}{}_{(B}{\big({\mathbb D}_o^2\big)}^{C)}{}_{D)}
\nonumber \\
& +\tilde{B}\delta^{(A}{}_{(B}\delta^{C)}{}_{D)}\langle {\mathbb D}, {\mathbb D} \rangle
\end{align}
for some $\tilde{A}, \tilde{B}\in{\mathbb C}$. The double trace 
$\delta_C{}^{D}, \delta_A{}^{B}$
of the first equality in \eqref{squareab} implies $\tilde{B}=\frac{1}{12}$, while its first 
trace $\delta_C{}^{D}$
gives after the substitution for $\tilde{B}$ the value $\tilde{A}=\frac{2}{5}$. The second equality 
in \eqref{squareab} for   
${\mathbb D}^{[A}{}_{[B}{\mathbb D}^{C]}{}_{D]}$ is proved analogously.

The last claim follows by taking the trace component of 
the first equality in \eqref{dsquare}, or equivalently \eqref{dsquaredecomposition}.
The proof is complete.
\end{proof}

\begin{lemma}\label{preparationlemma}
Let $I^{A}{}_{B},\bar{I}^{A}{}_{B}\in {\mathcal E}^{A}{}_{B}$ be parallel sections 
of the projective adjoint tractor bundle. Then 
$I^{A}{}_{B}{\mathbb D}^{B}{}_{A}, \bar{I}^{A}{}_{B}{\mathbb D}^{B}{}_{A}$
are first order symmetry operators of $\Dir_s$, and there is 
$\mathfrak{sl}(3,{\mathbb R})$-invariant decomposition
\begin{align}
I^{A}{}_{B}{\mathbb D}^{B}{}_{A} \bar{I}^{C}{}_{D}{\mathbb D}^{D}{}_{C}= & \,\,
\big(I\boxtimes\bar{I}\big)^{(A}{}_{(B}{}^{C)}{}_{D)}{\mathbb D}^{B}{}_{A}{\mathbb D}^{D}{}_{C}
+\tfrac{6}{5}\big(I\odot\bar{I}\big)^{A}{}_{B}{\big({\mathbb D}_o^2\big)}^{B}{}_{A}
\nonumber \\
& +\tfrac{1}{6}[I,\bar{I}]^{A}{}_{B}[{\mathbb D},{\mathbb D}]^B{}_{A}
+\tfrac{1}{8}\langle I,\bar{I}\rangle\langle {\mathbb D}, {\mathbb D} \rangle.
\end{align}
\end{lemma}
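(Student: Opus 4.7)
The plan is to regard the product $I^{A}{}_{B}{\mathbb D}^{B}{}_{A} \bar{I}^{C}{}_{D}{\mathbb D}^{D}{}_{C}$ as the complete contraction of the $\mathfrak{sl}(3,\mathbb{R})$-tensor $I^{A}{}_{B}\bar{I}^{C}{}_{D}\in\mathrm{ad}\otimes\mathrm{ad}$ against the operator-valued tensor ${\mathbb D}^{B}{}_{A}{\mathbb D}^{D}{}_{C}$. By Schur's lemma, only matching $\mathfrak{sl}(3,\mathbb{R})$-irreducible components contract nontrivially. The four summands on the right-hand side correspond exactly to the four irreducible types present in both factors: the Cartan piece $\boxtimes^{2}\mathfrak{sl}(3,\mathbb{R})$, the symmetric adjoint piece, the skew adjoint piece, and the trivial scalar (cf.\ \eqref{ItimesI} on the left and Lemmas \ref{symmcompos}, \ref{asymmcompos} on the right).

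First I would split $I^{A}{}_{B}\bar{I}^{C}{}_{D}$ into symmetric and skew parts under the pair-swap $(A,B)\leftrightarrow(C,D)$. The skew part of $\mathrm{ad}\otimes\mathrm{ad}$ contains, beyond the adjoint component carried by $[I,\bar{I}]$, further irreducibles with no counterpart in the symmetric sector. However, Lemma \ref{asymmcompos} asserts that the entire skew part of ${\mathbb D}^{B}{}_{A}{\mathbb D}^{D}{}_{C}$ already lies in the adjoint isotype generated by $[{\mathbb D},{\mathbb D}]$; hence those ``extra'' irreducibles pair to zero, and the skew sector contributes exactly $\tfrac{1}{6}[I,\bar{I}]^{A}{}_{B}[{\mathbb D},{\mathbb D}]^{B}{}_{A}$, the coefficient $\tfrac{1}{6}$ being fixed by the $\tfrac{2}{3}$-normalization in Lemma \ref{asymmcompos}.

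For the symmetric sector I would use Lemma \ref{symmcompos} to decompose ${\mathbb D}^{(A}{}_{(B}{\mathbb D}^{C)}{}_{D)}$ as a trace-free Cartan part plus $\tfrac{2}{5}\delta^{(A}{}_{(B}({\mathbb D}_o^2)^{C)}{}_{D)}$ plus $\tfrac{1}{12}\delta^{(A}{}_{(B}\delta^{C)}{}_{D)}\langle{\mathbb D},{\mathbb D}\rangle$, and correspondingly decompose the symmetric part of $I\otimes\bar{I}$ via \eqref{ItimesI} into $I\boxtimes\bar{I}$, the trace-symmetric adjoint piece $(I\odot\bar{I})_o$, and the scalar $\langle I,\bar{I}\rangle$. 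The Cartan--Cartan pairing matches with coefficient $1$ since $I\boxtimes\bar{I}$ is trace-free. Contracting the remaining trace-symmetric and scalar pieces, the factors $\tfrac{2}{5}$ and $\tfrac{1}{12}$ of Lemma \ref{symmcompos} combine with the projection constants extracting $(I\odot\bar{I})_o$ and $\langle I,\bar{I}\rangle$ from the symmetric part of $I\otimes\bar{I}$, producing the coefficients $\tfrac{6}{5}$ and $\tfrac{1}{8}$.

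The main obstacle is accurate bookkeeping of these constants, as several competing factors arise from symmetrizations, Kronecker trace contractions, and from the combinatorics of the rank-three adjoint projectors. A clean verification is to impose three specializations on $I,\bar{I}$: the double trace over both pair indices, which isolates the scalar channel and fixes $\tfrac{1}{8}$; the single trace over one pair, which isolates the $(I\odot\bar{I})_o\cdot{\mathbb D}_o^{2}$ channel and fixes $\tfrac{6}{5}$; and the pair-swap antisymmetrization, which isolates the bracket channel and fixes $\tfrac{1}{6}$. These three projections determine the three coefficients uniquely, and the Cartan--Cartan identity then follows automatically from the orthogonality of Schur components.
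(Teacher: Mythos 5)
Your proposal reproduces the paper's proof: you split the composition $I^{A}{}_{B}\bar{I}^{C}{}_{D}\,{\mathbb D}^{B}{}_{A}{\mathbb D}^{D}{}_{C}$ into symmetric and skew parts under the pair swap, invoke Lemmas \ref{symmcompos} and \ref{asymmcompos} to decompose each piece of ${\mathbb D}\otimes{\mathbb D}$ into trace-free, trace-adjoint, and scalar components, and then use the identities \eqref{ItimesI} to recast the result in terms of $I\boxtimes\bar I$, $[I,\bar I]$ and $\langle I,\bar I\rangle$ --- precisely the computation the paper performs. The Schur-orthogonality framing and the suggested trace/antisymmetrization sanity checks are a useful gloss, but they add nothing structurally new; the core argument and the origin of the coefficients $\tfrac{6}{5}$, $\tfrac{1}{6}$, $\tfrac{1}{8}$ are the same.
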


\begin{proof}
By \eqref{commutdsprolong} and the assumption that $I,\bar{I}$
are parallel tractors, $I^{A}{}_{B}{\mathbb D}^{B}{}_{A}$ and 
$\bar{I}^{A}{}_{B}{\mathbb D}^{B}{}_{A}$
are first order symmetry operators of $\Dir_s$.

Secondly, we have by Lemma \ref{asymmcompos} and Lemma \ref{symmcompos}
\begin{align}
I^{A}{}_{B}\bar{I}^{C}{}_{D} & {\mathbb D}^{B}{}_{A}{\mathbb D}^{D}{}_{C}= \,
\tfrac{1}{2}I^{A}{}_{B}\bar{I}^{C}{}_{D}({\mathbb D}^{B}{}_{A}{\mathbb D}^{D}{}_{C} 
+ \,{\mathbb D}^{D}{}_{C}{\mathbb D}^{B}{}_{A})
\nonumber \\
& +\tfrac{1}{2}I^{A}{}_{B}\bar{I}^{C}{}_{D}({\mathbb D}^{B}{}_{A}{\mathbb D}^{D}{}_{C} 
- \,{\mathbb D}^{D}{}_{C}{\mathbb D}^{B}{}_{A})
\nonumber \\
= & \, \big(I\boxtimes\bar{I}\big)^{(A}{}_{(B}{}^{C)}{}_{D)}{\mathbb D}^{B}{}_{A}{\mathbb D}^{D}{}_{C}
\nonumber \\
& +I^{A}{}_{B}\bar{I}^{C}{}_{D}\big(
\tfrac{2}{5}\delta^{(B}{}_{(A}{\big({\mathbb D}_o^2\big)}^{D)}{}_{C)}
-2\delta^{[B}{}_{[A}{\big({\mathbb D}_o^2\big)}^{D]}{}_{C]}\big)
\nonumber \\
& +I^{A}{}_{B}\bar{I}^{C}{}_{D}\big(
\tfrac{1}{12}\delta^{(B}{}_{(A}\delta^{D)}{}_{C)}\langle {\mathbb D}, {\mathbb D} \rangle
-\tfrac{1}{6}\delta^{[B}{}_{[A}\delta^{D]}{}_{C]}\langle {\mathbb D}, {\mathbb D} \rangle
\big)
\nonumber \\ 
& +\tfrac{1}{3}I^{A}{}_{B}\bar{I}^{C}{}_{D}
\big(\delta_{[A}{}^{(B}[{\mathbb D},{\mathbb D}]_{C]}{}^{D)}
-\delta_{(A}{}^{[B}[{\mathbb D},{\mathbb D}]_{C)}{}^{D]}\big),  \nonumber
\end{align}
and the substitution of identities \eqref{ItimesI} yields the claim.
The proof is complete.
\end{proof}

\begin{theorem}\label{maintheorem}
Let $I^{A}{}_{B},\bar{I}^{A}{}_{B}\in {\mathcal E}^{A}{}_{B}$ be parallel sections 
of the projective adjoint tractor bundle $\lA_{\mathfrak{d}}$
corresponding to the first order symmetry operators 
$S^v=I^{A}{}_{B}{\mathbb D}^{B}{}_{A}$ and 
$S^{\bar{v}}=\bar{I}^{A}{}_{B}{\mathbb D}^{B}{}_{A}$
of $\Dir_s$. Then their composition equals
\begin{align}
S^v\circ & S^{\bar{v}}= \, I^{A}{}_{B}{\mathbb D}^{B}{}_{A} \bar{I}^{C}{}_{D}{\mathbb D}^{D}{}_{C}
\nonumber \\
 &=\,\big(I\boxtimes\bar{I}\big)^{(A}{}_{(B}{}^{C)}{}_{D)}
{\mathbb D}^{B}{}_{A}{\mathbb D}^{D}{}_{C}
+\tfrac12[I,\bar{I}]^{A}{}_{B}{\mathbb D}^{B}{}_{A}
-\tfrac{3}{16}\langle I,\bar{I}\rangle\quad \mathrm{mod}\quad \Dir_s ,
\end{align}
hence the symmetry algebra of $\Dir_s$ is isomorphic to the quotient of the tensor algebra
$\bigoplus\limits_{k=0}^\infty\otimes^k\big(\mathfrak{sl}(3,{\mathbb R})\big)$
by a two sided ideal generated by quadratic relations
\begin{align}\label{idealintensor}
I\otimes\bar{I}-I\boxtimes\bar{I}-\tfrac{1}{2}[I,\bar{I}]+\tfrac{3}{16}\langle I,\bar{I}\rangle .
\end{align}
Equivalently, the symmetry algebra of $\Dir_s$ is the quotient of the universal enveloping 
algebra $U(\mathfrak{sl}(3,{\mathbb R}))$ by a two sided ideal generated by quadratic 
relations 
\begin{align}
I\bar{I}+\bar{I}I-2I\boxtimes\bar{I}+\tfrac38\langle I,\bar{I}\rangle,\quad 
I,\bar{I}\in \mathfrak{sl}(3,{\mathbb R}) .
\end{align}
\end{theorem}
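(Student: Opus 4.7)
The plan is to prove Theorem \ref{maintheorem} in two steps: first establish the composition formula for $S^v \circ S^{\bar{v}}$ modulo $\Dir_s$, and then promote it to the claimed presentation of $\cA$.

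For the first step I would start from Lemma \ref{preparationlemma}, which already expresses $I^A{}_B \bar{I}^C{}_D \mathbb{D}^B{}_A \mathbb{D}^D{}_C$ as a sum of four $\mathfrak{sl}(3,\mathbb{R})$-irreducible pieces: the Cartan-product term coupled to $\mathbb{D}\mathbb{D}$, the traceless symmetric term $\tfrac{6}{5}(I\odot\bar{I})^A{}_B (\mathbb{D}_o^2)^B{}_A$, the commutator term $\tfrac{1}{6}[I,\bar{I}]^A{}_B[\mathbb{D},\mathbb{D}]^B{}_A$, and the scalar $\tfrac{1}{8}\langle I,\bar{I}\rangle \langle \mathbb{D},\mathbb{D}\rangle$. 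Modulo $\Dir_s$, three of these collapse: identity \eqref{dsquare} says $(\mathbb{D}_o^2)^B{}_A = \tfrac{1}{2}\mathbb{X}^{b}{}^B{}_A \bfga_b \Dir_s$ is a trivial symmetry, killing the $\odot$-term; identity \eqref{commd3d} gives $[\mathbb{D},\mathbb{D}]^B{}_A = 3\mathbb{D}^B{}_A$, collapsing the commutator piece to $\tfrac{1}{2}[I,\bar{I}]^A{}_B \mathbb{D}^B{}_A$; and Lemma \ref{symmcompos} provides the scalar $\langle \mathbb{D},\mathbb{D}\rangle = -\tfrac{3}{2}$, yielding the constant $-\tfrac{3}{16}\langle I,\bar{I}\rangle$. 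Assembling these three substitutions gives the stated formula for $S^v \circ S^{\bar{v}}$ modulo $\Dir_s$.

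For the second step, Theorem \ref{mainsymbols} together with Proposition \ref{prol} identifies $\cA$ as a graded vector space with $\bigoplus_k \boxtimes^k \mathfrak{sl}(3,\mathbb{R})$, each component realized by the canonical operators $\cQ^k(\sigma) = \Sigma(\sigma)^{A_1 \ldots A_k}{}_{B_1 \ldots B_k}\mathbb{D}^{B_1}{}_{A_1}\cdots \mathbb{D}^{B_k}{}_{A_k}$. The composition formula, read inside $\cA$, gives $I\otimes\bar{I} \equiv I\boxtimes\bar{I} + \tfrac{1}{2}[I,\bar{I}] - \tfrac{3}{16}\langle I,\bar{I}\rangle$, so the natural surjection $T(\mathfrak{sl}(3,\mathbb{R})) \twoheadrightarrow \cA$ factors through the quotient by the quadratic ideal \eqref{idealintensor}. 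For the universal enveloping algebra form, I symmetrize the relation under $I \leftrightarrow \bar{I}$, using $I\boxtimes\bar{I} = \bar{I}\boxtimes I$, $[\bar{I},I] = -[I,\bar{I}]$ and symmetry of $\langle\,,\,\rangle$; adding the two instances and passing through $T(\mathfrak{sl}(3,\mathbb{R})) \twoheadrightarrow U(\mathfrak{sl}(3,\mathbb{R}))$ produces the symmetric relation $I\bar{I} + \bar{I}I - 2I\boxtimes\bar{I} + \tfrac{3}{8}\langle I,\bar{I}\rangle$.

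The main obstacle is showing that the quadratic ideal captures \emph{all} relations in $\cA$, i.e.\ that the surjection from the quadratic quotient onto $\cA$ is injective. Equivalently, one must verify that the $k$-th graded piece of the quadratic quotient has dimension $(k+1)^3 = \dim \boxtimes^k \mathfrak{sl}(3,\mathbb{R})$. This is precisely the identification of our two-sided ideal with the Joseph ideal attached to the minimal nilpotent orbit of $\mathfrak{sl}(3,\mathbb{C})$, and can be handled either by a direct inductive Cartan-component count at each order or by invoking the classification results of \cite{jos, gar, ess} already cited in the paper.
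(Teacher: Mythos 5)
Your proposal is correct and follows essentially the same route as the paper: assemble the composition formula from Lemma \ref{preparationlemma} together with \eqref{dsquare}, \eqref{commd3d} and $\langle\mathbb{D},\mathbb{D}\rangle=-\tfrac32$, then pass the resulting quadratic relation through the surjection $\bigoplus_k\otimes^k\mathfrak{g}\twoheadrightarrow\cA$ and verify the ideal is large enough by comparing graded dimensions with $\dim\boxtimes^k\mathfrak{sl}(3,\mathbb{R})=(k+1)^3$. The only place where you are slightly looser than the paper is the final injectivity step: the paper carries it out via the explicit intersection identity $(\mathfrak{g}\otimes\cA^{k-1})\cap(\cA^{k-1}\otimes\mathfrak{g})=\cA^k$ plus a standard projection/dimension argument, which is precisely the ``direct inductive Cartan-component count'' you sketch as one of your two alternatives.
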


\begin{proof}
The proof goes along the same lines as in e.g., \cite{eas}, so we shall give just 
a brief account of its exposition. 

The identification of $\gog=\mathfrak{sl}(3,{\mathbb R})$ with differential symmetries
is given by the mapping $I^{A}{}_{B}\mapsto I^{A}{}_{B}{\mathbb D}^{B}{}_{A}$,
where $I^{A}{}_{B}$ is a parallel section of the projective adjoint tractor bundle 
$\lA_{\mathfrak{d}}$. This extends to
\begin{align}\label{tenscompos}
\gog\otimes\ldots\otimes\gog\mapsto (I^{A}{}_{B}{\mathbb D}^{B}{}_{A}) 
\ldots (\bar{I}^{C}{}_{D}{\mathbb D}^{D}{}_{C})
\end{align}
with $I^{A}{}_{B}\otimes\ldots\otimes \bar{I}^{D}{}_{C}\in \gog\otimes\ldots\otimes\gog$,
and hence to the full tensor algebra $\bigoplus_k\otimes^k\gog$ by linearity.

The first step in the proof is to express the composition 
$I^{A}{}_{B}{\mathbb D}^{B}{}_{A}\bar{I}^{C}{}_{D}{\mathbb D}^{D}{}_{C}$
for $I^{A}{}_{B}, \bar{I}^{A}{}_{B}\in\gog$ in terms of canonical 
symmetries. This was already done in Lemma \ref{preparationlemma}. 

To finish the proof, we have the following observations. The mapping
\eqref{tenscompos} determines an associative algebra morphism
$\bigoplus_k\otimes^k\gog\to\cA$ with $\cA$ the algebra of symmetries, cf.\  the paragraph beyond 
\eqref{symcond}, which is surjective as a consequence
of the fact that the canonical symmetries $I^{A}{}_{B}$ arise in the range 
of \eqref{tenscompos} (cf., Lemma \ref{preparationlemma}). We want to find
all relations, that is to identify the two-sided ideal of this algebra
morphism. As we already identified the generators of the ideal \eqref{idealintensor},
it remains to show that this ideal is large enough to have $\cA$
as the resulting quotient. 

Since we know $\cA$ as a vector space, 
cf. Section \ref{sec:4}, it is sufficient to consider the associated 
graded algebra (the symbol algebra of $\cA$.) The corresponding graded 
ideal contains $I\otimes \bar{I}-I\boxtimes \bar{I}$ for $I, \bar{I}\in\gog$,
hence contains the skew-symmetric component of the tensor product $\gog\wedge\gog$. 
Therefore, we can pass to symmetric tensors $\odot\gog$ in the tensor algebra
and write ${\fam2 I}$ for the ideal in $\odot\gog$ defined as the image of 
\eqref{idealintensor}. Now we claim that as for the associated graded
$\cA=\bigoplus_k\cA^k$, where the $\cA^k$ are defined as the $\gog$-submodules
satisfying $\cA^k=\{F^{(A_1}{}_{(B_1}{}^{\ldots}{}_{\ldots}{}^{A_k)}{}_{B_k)}
\quad \text{with all traces zero}\}\subset \odot^k\gog$. 

To finish the proof, we need
to show the vector space decomposition $\odot^k\gog=\cA^k\oplus{\fam2 I}^k$
for ${\fam2 I}^k:={\fam2 I}\cap \odot^k\gog$. This is based on the following 
observation:
\begin{align}
\big(\gog\otimes\cA^{k-1}\big)\cap\big(\cA^{k-1}\otimes\gog\big)=\cA^k,
\quad  
\end{align}
which is elementary to check directly for $\mathfrak{sl}(3,{\mathbb R})$
(and at the same time holds for $\mathfrak{sl}(n,{\mathbb R})$ in general): the inclusion 
$\supseteq$ is obvious, and to prove the inclusion $\subseteq$
we consider $F^{A_1}{}_{B_1}{}^{\ldots}{}_{\ldots}{}^{A_{k}}{}_{B_{k}}$ living in the intersection
on the left hand side of the display. Then
$$
F^{A_1}{}_{B_1}{}^{\ldots}{}_{\ldots} {}^{A_i} {}^{\ldots} {}^{A_j} {}^{\ldots}{}_{\ldots} 
{}^{A_{k}}{}_{B_{k}}
= F^{A_1}{}_{B_1}{}^{\ldots}{}_{\ldots} {}^{A_j} {}^{\ldots} {}^{A_i} {}^{\ldots}{}_{\ldots} 
{}^{A_{k}}{}_{B_{k}}
$$
for any $1 \leq i < j \leq k$. A similar conclusion holds for  the symmetry in the collection
of lower indices as well as for the trace-freeness.


The final step relies on the following standard fact in the representation theory of 
simple Lie algebras. There is a projection $\odot^k\gog\to\cA^k$ such that the induced
projections $P^k: \odot^k\gog\to \gog\otimes\cA^{k-1}$ and
$\tilde{P}^k: \odot^k\gog\to \cA^{k-1}\otimes\gog$ have their kernels
in $\gog\otimes{\fam2 I}^{k-1}$ and ${\fam2 I}^{k-1}\otimes\gog$,
respectively. In particular, it is contained in ${\fam2 I}^k$ for both cases. By 
standard dimensional considerations in linear algebra, 
\begin{align}
\odot^k\gog=\big(\mathrm{Im}(P^k)\cap \mathrm{Im}(\tilde{P}^k)\big)\oplus
\big(\mathrm{Ker}(P^k)+ \mathrm{Ker}(\tilde{P}^k)\big)
\end{align}
for all $k\geq 3$, so the claim above follows. This completes the proof of theorem.
\end{proof}

It is well known (cf., \cite{o}, Section $4$) that the representation of 
$\mathfrak{sl}(3,{\mathbb R})$ on $\mathrm{Ker}(\Dir_s)$ is an unitary 
irreducible representation
equivalent to the exceptional representation associated
with the minimal nilpotent orbit of $\mathfrak{sl}(3,{\mathbb R})$, 
cf. \cite{dix}, \cite{jos}, \cite{tor}.
This result is based on the analysis of K-types in the underlying 
Harish-Chandra module of $\mathrm{Ker}(\Dir_s)$.
We note that in the case of simple Lie algebras $A_n, n\in{\mathbb N}$, the 
Joseph ideal in $U(\gog)$ is not uniquely defined and 
there is a one parameter family of completely prime primitive ideals having
as its associated variety the minimal nilpotent orbit.  


\section{Comments and open questions}

Let us conclude by observing that higher symmetries of $\Dir_s$ for dimensions $2n>2$
are not induced from a semi-simple Lie algebra of symmetries. In particular, it is 
straightforward to see that for $({\mR}^{2n},\omega)$ and the flat symplectic connection $\na$,
a general first order symmetry differential  operator is of the form 
\begin{align*} 
{\fam2 O} = v^a\nabla_a+
\sum\limits_{j=0}^\infty\gamma^{a_1}\ldots\gamma^{a_{2j}}w^j_{a_1\ldots a_{2j}}, \quad
 w^j_{a_1\ldots a_{2j}}\in {\mathcal E}_{(a_1\ldots a_{2j})}
\end{align*}
(with $w^j_{a_1\ldots a_{2j}}=0$ for almost all $j\in\mN_0$) and the symbol $v^a$ fulfilling
the differential system
\begin{align} \label{diffsystemsd}
\nabla_{(a}\nabla_bv_{c)}=0\quad\text{and}\quad\nabla^{[a}v^{b]_0}=0,
\end{align}
where the subscript $0$ indicates the ``trace-free'' part with respect to $\om.$ 

Prolonging this system, we obtain the bundle
$\ol{\mathcal{T}} := \cE_a \oplus \cE_{(ab)} \oplus \cE$ with the connection
\begin{align*} 
\ol{\nabla}_c\left( \begin{array}{ccc}
& v_a &  \\
w_{(ab)} & & \varphi     
\end{array} \right)=\left( \begin{array}{c}
\nabla_cv_a  - w_{ca} - \varphi\omega_{ca}   \\
\nabla_c w_{ab} \quad | \quad \nabla_c\varphi    
\end{array} \right)
\end{align*}
for $(v_a, w_{ab},\varphi) \in \cE_a \oplus \cE_{(ab)} \oplus \cE$. In particular, 
the solution space of the system \eqref{diffsystemsd}, i.e., the Lie algebra 
of first order symmetries of $\Dir_s$,  is isomorphic to 
the space of covariantly constant sections of $\ol{\mathcal{T}}$.
This ismorphism is given by the projection to the top slot in one direction and by 
the differential splitting 
\begin{align*} 
v_a\mapsto\left( 
\begin{array}{c}
v_a \\ \nabla_{(a}v_{b)} \quad \mid \quad \frac{1}{n}\omega^{kl}\nabla_kv_l     
\end{array} \right) .
\end{align*}
in the opposite direction. From this one can easily see that the solution space is a Lie algebra
given by the semidirect product of $\mathfrak{sp}(n) \oplus \mathbb{R}$ with its representation 
on $\mathbb{R}^{2n}$.

So far we discussed higher symmetries for projectively flat manifolds in the real dimension 2 and indicated 
an analogous problem for flat affine symplectic manifolds in higher dimensions.
The curved setting is far more complicated. However, we expect that at least the case of second order symmetries
is this task manageable in the sense that one can find symmetry operators explicitly 
on the assumption of certain curvature conditions.

We shall analyze these questions in more detail elsewhere.

\subsection*{Acknowledgments} 
P. Somberg and J. \v{S}ilhan acknowledge the financial support from the grant GA CR P201/12/G028.


\vspace{0.5cm} 

{Petr Somberg: Mathematical Institute of Charles
  University, \\ Sokolovsk\'a 83, Prague, Czech Republic,}
	{somberg@karlin.mff.cuni.cz}

\vspace{0.2cm}

{Josef \v{S}ilhan: Inst. of Math. and Stat. of Masaryk
  University, \\ Building 08, Kotl\'a\v{r}sk\'a 2, 611 37,
  Brno, Czech Republic}, {silhan@math.muni.cz} 
 

\begin{thebibliography}{99}

\bibitem{beg}
Bailey T.N., Eastwood M.G., Gover A.R.,
Thomas's structure bundle for conformal, projective and related structures, 
Rocky Mountain J.\ Math.\ 24 (1994), no.\ 4, 1191-1217. 


\bibitem{bhs}
De Bie H., Hol\'ikov\'a M., Somberg P.,
Basic Aspects of Symplectic Clifford Analysis for the Symplectic Dirac Operator,
Adv. Appl. Clifford Algebras (2017) 27: 1103. https://doi.org/10.1007/s00006-016-0696-4.

\bibitem{cd}
Calderbank D.M.J.,  Diemer T., 
Differential invariants and curved Bernstein-Gelfand-Gelfand sequences. 
J.\ Reine Angew.\ Math.\ 537 (2001), 67-103. 

\bibitem{cs}
\v{C}ap A. and Slov\'ak J., Parabolic Geometries I: Background and General Theory,
Mathematical Surveys and Monographs, vol. 154, 2009, ISBN-13: 978-0-8218-2681-2.

\bibitem{cgr}
Cahen, M., Gutt, S. and Rawnsley, J. H., Symplectic Dirac operators and Mpc -structures,
General Relativity and Gravitation, Vol. 43 (No. 12), 3593--5617, 2011.

\bibitem{dix}
 Dixmier J., Ideaux maximaux dans l'alg\'ebre enveloppante dune alg\'ebre de Lie semisimple complexe, 
C. R. Acad. Sci. Paris 214, 1972, 228--230.

\bibitem{eas}
Eastwood M. G., Higher symmetries of the Laplacian, Ann. of Math. (2) 161, 2005, 1645--1665.


\bibitem{er}
Eastwood M. G., Ryan J., Monogenic Functions in Conformal Geometry, SIGMA, Volume 3, 084, 14 pages,
2007.  


\bibitem{ess}
Eastwood M. G., Somberg P., Sou\v{c}ek, Special tensors in the deformation theory of quadratic algebras for 
the classical Lie algebras, J.\ Geom.\ Phys.\ 57 (2007), no.\ 12, 2539-2546. 


\bibitem{gar}
Garfinkle D., A New Construction of the Joseph ideal, Ph.D. thesis, M.I.T., 1982.

of Math. Studies n. 97, 225--244 (1981).

\bibitem{hab}
Habermann K., The Dirac operator on symplectic spinors, 
Ann. Glob. Anal. Geom. 13, 155--168, 1995, https://doi.org/10.1007/BF01120331. 

\bibitem{jos}
Joseph A., The minimal orbit in a simple Lie algebra and its associated maximal ideal,
Annales scientifiques de l'\'Ecole normale sup\'erieure 9, Issue 1, 1976, 1--29.

\bibitem{kos}
Kostant B., Symplectic Spinors. Symposia Mathematica, vol. XIV, pp. 139--152,
Cambridge University Press, Cambridge, 1974.

\bibitem{km}
Kriegl A., Michor P., A Convenient Setting for Global Analysis, 
Amer.\ Math.\ Soc, Providence, RI (1993).

\bibitem{khs}
K\v{r}i\v{z}ka L., Hol\'ikov\'a M. and Somberg P., Projective structure, 
$\widetilde{SL}(3, \mathbb{R})$ and the symplectic 
Dirac operator, Archivum Mathematicum, 52, 2016, No. 5, 313--324.

\bibitem{lm}
Landsberg J.M., Manivel L., 
A universal dimension formula for complex simple Lie algebras. 
Adv.\ Math.\ 201 (2006), no.\ 2, 379-407. 

\bibitem{mil}
Miller W., Jr., Symmetry and Separation of Variables, Addison-Wesley, Publ., Co.,
Reading, Mass., 1977.

\bibitem{olv1}
Olver P. J., Equivalence, Invariants, and Symmetry, Cambridge University Press, ISBN: 978-0-511-60956-5,
1995.

\bibitem{olv2}
Olver P. J., Applications of Lie Groups to Differential Equations,
Graduate Texts in Mathematics, volume 107, Springer-Verlag New York,
ISBN 978-0-387-95000-6, 1993.

\bibitem{o}
Orsted B., Generalized gradients and Poisson transform, 
S\'eminaires et Congr\`es {4}, {2000}, 235--249.

\bibitem{tor}
Torasso P., Quantification g\'eom\'etrique, op\'erateurs d'entrelacement et repr\'esentations
unitaires de $SL(3,\mathbb{R})$, Acta Math. 150, 1983, no. 3--4, 153--242.

\end{thebibliography}
  \end{document}